\documentclass[11pt]{amsart}
\usepackage{amsmath,amssymb,latexsym,soul,cite,mathrsfs,amsfonts}
\usepackage{float}
\usepackage{xcolor}
\usepackage{color,enumitem,graphicx}
\usepackage{mathtools}
\usepackage[colorlinks=true,urlcolor=blue,
citecolor=blue,linkcolor=blue,linktocpage,pdfpagelabels,
bookmarksnumbered,bookmarksopen]{hyperref}
\usepackage[english]{babel}
\usepackage{outline}
\usepackage{comment}
\usepackage{frame}
\usepackage[left=1.9cm,right=1.9cm,top=2.0cm,bottom=2.0cm]{geometry}
\usepackage[hyperpageref]{backref}
\usepackage[colorinlistoftodos]{todonotes}
\makeatletter
\providecommand\@dotsep{5}
\def\listtodoname{List of Todos}
\def\listoftodos{\@starttoc{tdo}\listtodoname}
\makeatother
\usepackage{verbatim}
\numberwithin{equation}{section}
\newtheorem{thm}{Theorem}[section]
\newtheorem{prop}[thm]{Proposition}
\newtheorem{lem}[thm]{Lemma}

\newcommand{\R}{\mathbb{R}}
\newcommand{\G}{\mathbb{G}}
\newcommand{\2}{2^*_{Q}}

\newcommand{\h}{S^1_{0}(\Omega)}

\title []{Critical Ambrosetti-Prodi type problems on Carnot groups}
\author[Suman Kanungo]{Suman Kanungo}
\address[]{\newline\indent
	Department of Mathematics
	\newline\indent
	Indian Institute of Technology Bhilai
	\newline\indent
	491002, Durg, Chhattisgarh, India}
\email{\href{sumankau@iitbhilai.ac.in}{sumankau@iitbhilai.ac.in}}

\author[Pawan Kumar Mishra]{Pawan Kumar Mishra}
\address[]{\newline\indent
	Department of Mathematics
	\newline\indent
	Indian Institute of Technology Bhilai
	\newline\indent
	491002, Durg, Chhattisgarh, India}
\email{\href{pawan@iitbhilai.ac.in}{pawan@iitbhilai.ac.in}}
\subjclass[2020]{}
\keywords{}
\subjclass[2020]{Primary 35R03, 35J70, 35J60, 35B33}
\keywords{Carnot groups, Ambrosetti-Prodi problems, Critical exponents}
\begin{document}
	\begin{abstract}
		In this paper, we investigate a class of critical Ambrosetti-Prodi type problems involving the sub-Laplacian on a Carnot group. Specifically, we consider
		\[
		\left\{
		\begin{aligned}
			-\Delta_{\mathbb{G}} u &= \lambda u + u_{+}^{2_{Q}^{*}-1} + f(\xi) \quad &&\text{in } \Omega,\\[2mm]
			u &= 0 \quad &&\text{on } \partial\Omega,
		\end{aligned}
		\right.
		\]
		where $\Delta_{\mathbb{G}}$ is the sub-Laplacian on a Carnot group $\mathbb{G}$, $\Omega \subset \mathbb{G}$ is an open bounded domain with smooth boundary, $\lambda>0$ is a real parameter, $f\in L^{\infty}(\Omega)$, $u_{+}$ denotes the positive part of $u$, and $2_{Q}^{*}$ is the critical Sobolev exponent associated with the homogeneous dimension $Q$. Motivated by the classical Ambrosetti-Prodi problem, we establish existence and multiplicity results for the cases $\lambda<\lambda_{1}$ and $\lambda>\lambda_{1}$, where $\lambda_{k}$ denotes the $k$-th Dirichlet eigenvalue of $-\Delta_{\mathbb{G}}$. We also prove the existence of solutions at resonance when $\lambda=\lambda_{1}$ and show that bifurcation occurs from each eigenvalue $\lambda_{k}, k >1$.
	\end{abstract}
	\maketitle
	\section{Introduction}
	Let $\G$ be a Carnot group and let $\Omega \subset \G$ be an open bounded domain with smooth boundary $\partial \Omega$. Let $\2 \coloneq \frac{2Q}{Q-2}$ be the critical Sobolev exponent associated with the Sobolev inequality in $\G$, where $Q$ denotes the homogeneous dimension. For $\lambda>0$, we study the following critical problem
	\begin{equation}
		\label{1.1}
		\left\{ \begin{aligned} 
			&-\Delta_{\G} u = \lambda u + u^{\2-1}_+ + f(\xi) \quad &&\text{ in } \Omega,\\
			& u = 0 \quad &&\text{ on } \partial\Omega,
		\end{aligned} \right.
	\end{equation}
	where $f\in L^\infty(\Omega)$, $u_+ \coloneq \max\{ u, 0\}$ and $\Delta_{\G}$ denotes the sub-Laplacian on $\G$. Our study is motivated by the classical problem introduced by Ambrosetti and Prodi \cite{Ambrosetti},  which describes how the solvability and multiplicity of solutions depend on the interaction between the nonlinearity and the spectrum of the underlying operator. Precisely, they studied the boundary value problem
	\begin{equation}
		\label{1.2}
		\begin{cases}
			-\Delta u = g(u) + f(x), & \text{ in } \Omega, \\
			u = 0, &  \text{ on }\partial \Omega,
		\end{cases}
	\end{equation}
	where $f \in C^{0,\alpha}(\bar{\Omega})$, for some $\alpha \in (0,1)$, and $g \in C^2(\mathbb{R})$ satisfies $g(0)=0$, $g''(s) > 0$ for all $s\in \R$. Defining
	$g^- = \lim_{s\to -\infty} g(s)/s$, $g^+ = \lim_{s\to +\infty} g(s)/s$ and assuming $0<g^-<\mu_1<g^+<\mu_2$, where  $\mu_k$ be the $k$-th eigenvalue of $-\Delta$ with with Dirichlet boundary conditions, they proved the existence of a closed, connected $C^1$-manifold $M\subset C^{0,\alpha}(\bar{\Omega})$ of codimension one such that $C^{0,\alpha}(\bar \Omega)\setminus M$ consists of two components. For $f$ lies in one component the problem admits no solution, in the other exactly two solutions, while for $f\in M$ it admits a unique solution. This result emphasizes the role of the interaction between the asymptotic behavior of the nonlinearity and the spectrum of $-\Delta$. Motivated by this perspective, several authors have studied Ambrosetti-Prodi type problems with unilateral superlinear growth, in particular $g^{-}\in(0,\mu_1)$ and $g^{+}=+\infty$. A prototype example is
	\begin{equation}
		\label{1.3}
		\left\{ \begin{aligned} 
			&-\Delta u = \lambda u + u_+^p + f(x) \quad &&\text{ in } \Omega,\\
			& u = 0 \quad &&\text{ on } \partial\Omega,
		\end{aligned} \right.
	\end{equation}
	where $1 < p < \frac{N+2}{N-2}$, if $N \geq 3$, $1 < p < \infty$ if $N=2$, and $u_+= \max\{u,0 \}$. For $ \lambda <\mu_1$, the nonlinearity $g(s) = (s_+)^p + \lambda s$ crosses the first eigenvalue, yielding a superlinear Ambrosetti-Prodi problem. This case was studied by de Figueiredo \cite{Figueiredo1}, assuming $f\in C^\alpha(\bar \Omega)$ of the form $f=t\phi_1+h$, where $\phi_1$ is the positive normalized eigenfunction associated to $\mu_1$, $h\in C^\alpha(\bar \Omega)$ satisfies $\int_{\Omega} h\phi_1\,dx=0$, and $t\in\mathbb{R}$. When $\lambda>\mu_1$, the nonlinearity does not cross the first eigenvalue, this case was investigated by Ruf and Srikanth \cite{Ruf}. Assuming $\lambda\neq\mu_k$ for all $k\in\mathbb{N}$ and the same structure of the term $f$, they proved the existence of a constant $T(h)$ such that, for $t>T(h)$, problem \eqref{1.3} admits at least two solutions. Their approach relies on the generalized Mountain Pass Theorem or linking theorem. Subsequently, Deng \cite{Deng} considered a superlinear elliptic problem involving the critical growth $u^{\2-1}_+$, together with a perturbation $g(x, u_+)$ of subcritical growth, and proved the existence of multiple solutions. Later, de Figueiredo and Jianfu \cite{Figueiredo} investigated a critical Ambrosetti-Prodi type problem of the form
	\begin{equation*}
		\left\{
		\begin{aligned}
			-\Delta u &= \lambda u + u_{+}^{2^{*}-1} + f(x) \quad &&\text{in } \Omega,\\
			u &= 0 \quad &&\text{on } \partial\Omega,
		\end{aligned}
		\right.
	\end{equation*}
	where $f = t\varphi_{1} + h$ as before, but with $h \in L^{r}(\Omega)$ for some $r>N$. 
	They showed that the problem admits a negative solution for large values of $|t|$, with the sign of $t$ depending on whether $\lambda$ lies below or above the first eigenvalue $\mu_{1}$. When $\lambda$ is not an eigenvalue and the dimension is sufficiently large, the existence of a second solution was established via the linking theorem. In the resonant case $\lambda=\mu_{1}$, they proved that a solution exists provided $f$ is sufficiently small in $L^{2}(\Omega)$. Moreover, they established bifurcation results from any eigenvalue. Ambrosetti-Prodi type problems involving different operators have also been studied in \cite{Ambrosio, Calanchi, Cuesta, Miyagaki, Paiva, Ribeiro, Sharma, Wei} and the references therein.
	
	Despite these developments in the Euclidean setting, to the best of our knowledge Ambrosetti-Prodi type problems, both in the subcritical and critical cases, have not been previously studied in the framework of Carnot groups. In particular, no results appear to be available for problem \eqref{1.1}, which involves the sub-Laplacian and unilateral critical growth. The purpose of the present work is to fill this gap. Before stating our main results, we briefly review some related developments on semilinear equations in Carnot groups.
	
	\subsection{Sub-Laplacians and Semilinear Problems on Carnot Groups}
	Let $\G$ be a Carnot group and $\Delta_{\G}$ denote a sub-Laplacian on $\G$ (see Section~\ref{sec1} for precise definitions). The operator $\Delta_{\G}$ is a second-order differential operator which is degenerate elliptic. The lack of uniform ellipticity leads to substantial analytical difficulties and prevents the direct use of many classical tools from elliptic theory. A crucial feature distinguishing sub-Laplacians on Carnot groups from general degenerate elliptic operators is that $\Delta_{\G}$ can be written as a sum of squares of left-invariant vector fields satisfying H\"ormander’s hypoellipticity condition \cite{Hormander}. This structure yields some regularity properties and plays a fundamental role in the analysis of partial differential equations on Carnot groups. In this direction, Folland \cite{Folland} developed a functional-analytic framework for sub-Laplacians and established the existence of a global fundamental solution. Shortly thereafter, Rothschild and Stein \cite{Rothschild} clarified the role of sums of squares of vector fields in the study of second-order partial differential equations, initiating a systematic study of PDEs on Carnot groups. In the Heisenberg group $\mathbb{H}^n$, which is the simplest nontrivial Carnot group, the associated critical problem
	has been extensively studied. In a series of seminal works, Jerison and Lee \cite{Jerison1,Jerison2,Jerison3} obtained sharp Sobolev inequalities and explicit extremal functions, extending the classical results of Aubin and Talenti. The importance of these results for critical nonlinear equations was emphasized by Br\'ezis and Nirenberg \cite{Brezis} and the corresponding Br\'ezis-Nirenberg type result on $\mathbb{H}^n$ was proved in \cite{Citti}. Further related results on the Heisenberg group are available in \cite{Birindelli, Birindelli1, Brandolini, Citti1, Garagnani, Garofalo1, Lanconelli, Lanconelli1, Lu, Maalaoui, Maalaoui1, Palatucci, Uguzzoni, Uguzzoni1}. For general Carnot groups, explicit minimizers for the critical Sobolev inequality are not known. However, the best Sobolev constant is attained \cite{Garofalo}, and the asymptotic behavior of minimizing sequences has been described in detail, see \cite{Bonfiglioli}. Using these estimates, Loiudice \cite{Loiudice2} established a Br\'ezis-Nirenberg type result for critical equations on Carnot groups.  Further related results on critical problems in Carnot groups can be found in \cite{Bisci, BonfiglioliUguzzoni, Garofalo, Garofalo1}. After reviewing the relevant works on the Carnot group, we are now ready to discuss the assumptions and main results of this paper. 
	
	\subsection{Assumptions and main results}
	
	Throughout this section, we adopt the notation and assumptions introduced in Section~\ref{sec1}. Let $S^1_0(\Omega)$ denotes the Folland-Stein space associated with $\Delta_{\G}$, endowed with the norm and inner product
	\[
	\|u\|_{\h} \coloneq \left( \int_{\Omega} |\nabla_{\G} u|^2 \, d\xi \right)^{1/2}, \quad \langle u, v \rangle_{\h} \coloneq \int_{\Omega} \langle \nabla_{\G} u, \nabla_{\G} v \rangle \, d\xi,
	\]
	for all $u,v \in S^1_0(\Omega)$, where $\nabla_{\G}$ denotes the $\G$-gradient. We say that $u \in S^1_0(\Omega)$ is a weak solution of
	problem \eqref{1.1} if
	\[
	\int_{\Omega} \langle \nabla_{\G} u, \nabla_{\G} \varphi \rangle \, d\xi
	= \lambda \int_{\Omega} u \varphi \, d\xi
	+ \int_{\Omega} u_+^{2^*_Q-1} \varphi \, d\xi
	+ \int_{\Omega} f \varphi \, d\xi
	\quad \text{for all } \varphi \in S^1_0(\Omega).
	\]
	Let $0 < \lambda_1 < \lambda_2 \le \cdots \le \lambda_k \le \lambda_{k+1} \le \cdots$ denote the eigenvalues of the operator $(-\Delta_{\G}, S^1_0(\Omega))$, repeated
	according to their multiplicity. Let $e_1$ be the eigenfunction associated with
	the first eigenvalue $\lambda_1$, normalized by
	$\|e_1\|_{L^2(\Omega)} = 1.$
	As recalled in Section~\ref{sec1}, $e_1$ is positive in $\Omega$. Throughout the paper, we assume that the function $f$ admits the decomposition $f = t e_1 + h,$
	where $t \in \mathbb{R}$ and $h \in L^\infty(\Omega)$ satisfies the orthogonality
	condition
	$\int_{\Omega} h e_1 \, d\xi = 0.$
	
	We are now in a position to state the main results of the paper. The first theorem concerns the existence and multiplicity of solutions to problem \eqref{1.1}, depending on the position of the parameter $\lambda$ with respect to the first eigenvalue of the sub-Laplacian.
	\begin{thm}\label{thm1.1}
		The following assertions hold.\\
		\textnormal{(i)} If $0 < \lambda < \lambda_1$, then there exists
		$t_0 = t_0(h) < 0$ such that, for every $t < t_0$, problem \eqref{1.1}
		admits a nonpositive solution $u_t \in \h$.\\
		\textnormal{(ii)} Let $\lambda > \lambda_1$. If $\lambda$ is not an eigenvalue
		of $-\Delta_\G$, then there exists $t_0 = t_0(h) > 0$ such that, for every
		$t > t_0$, problem \eqref{1.1} admits a nonpositive solution.
		If $\lambda$ is an eigenvalue and
		$h \in \ker(-\Delta_\G - \lambda I)^\perp$, the same conclusion holds.\\
		\textnormal{(iii)} Assume in addition that $Q > 6$ and that $\lambda$ is not an
		eigenvalue of $-\Delta_\G$. Then problem \eqref{1.1} admits a second nontrivial solution.
	\end{thm}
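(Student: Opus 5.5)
For parts (i) and (ii) we look for a solution with $u\le 0$, so that $u_+^{\2-1}=0$ and \eqref{1.1} reduces to the linear problem
\[
-\Delta_\G u-\lambda u=te_1+h \quad\text{in }\Omega,\qquad u=0 \text{ on }\partial\Omega .
\]
When $\lambda$ is not an eigenvalue this problem has the unique solution
\[
u_t=\frac{t}{\lambda_1-\lambda}\,e_1+w,\qquad w:=(-\Delta_\G-\lambda)^{-1}h .
\]
If $\lambda$ is an eigenvalue and $h\in\ker(-\Delta_\G-\lambda I)^\perp$, we take for $w$ the solution lying in $\ker^\perp$, which exists by the Fredholm alternative. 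The coefficient $t/(\lambda_1-\lambda)$ is negative for $t<0$ when $\lambda<\lambda_1$, and for $t>0$ when $\lambda>\lambda_1$.

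It remains to show that $u_t\le 0$ once $|t|$ is large. For this we need $|w|\le C e_1$ in $\Omega$. We would argue as follows. Since $h\in L^\infty$, the regularity theory recalled in Section~\ref{sec1} for $-\Delta_\G$ (Folland–Stein $L^p$ estimates, Moser iteration, and boundary regularity on smooth domains) gives $w\in L^\infty$. A comparison or Hopf-type argument, using that $e_1>0$ and that the boundary behaviour of $e_1$ controls that of $w$, then yields $|w|\le Ce_1$. With this bound, $u_t\le\bigl(t/(\lambda_1-\lambda)+C\bigr)e_1\le 0$ as soon as $|t|>C|\lambda_1-\lambda|$, which defines $t_0(h)$.

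This comparison is the delicate point in the sub-Riemannian setting, because characteristic boundary points can spoil a Hopf lemma. If the comparison fails, the fallback is to use the maximum principle directly. Writing $u_t=se_1+w$ with $s=t/(\lambda_1-\lambda)$, we note that $v=-u_t$ satisfies $-\Delta_\G v-\lambda v=-te_1-h$. For $\lambda<\lambda_1$ the weak maximum principle holds for $-\Delta_\G-\lambda$, so it suffices to show $-te_1-h\ge 0$. This holds where $e_1$ is bounded away from zero; near $\partial\Omega$ one would instead rely on smallness of $h$ in a suitable sense, or prove $|w|\le Ce_1$ via barriers built from $e_1^{\alpha}$. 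For $\lambda>\lambda_1$ no maximum principle is available, so we would rely on the $e_1$-comparison above.

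For part (iii) we translate the problem: $u=u_t+v$. Then $v$ solves
\[
-\Delta_\G v=\lambda v+(v+u_t)_+^{\2-1},\qquad\text{with functional}\qquad I(v)=\tfrac12\|v\|^2-\tfrac\lambda2|v|_2^2-\tfrac1{\2}\int_\Omega (v+u_t)_+^{\2}\,d\xi ,
\]
and we look for a nonzero critical point $v$. Choose $k$ with $\lambda_k<\lambda<\lambda_{k+1}$, and split $S^1_0(\Omega)=E^-\oplus E^+$, where $E^-$ is spanned by the first $k$ eigenfunctions. We apply the linking theorem of Rabinowitz.
\begin{itemize}
\item On $E^-$ we have $I\le 0$.
\item On a small sphere in $E^+$ we have $I\ge\alpha>0$, since $u_t\le 0$ gives $(v+u_t)_+\le v_+$ and the Sobolev inequality applies.
\item Palais–Smale sequences are bounded, by the usual $\2$-homogeneity argument. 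Via the concentration-compactness (Brezis–Lieb) lemma on $\G$, the (PS)$_c$ condition holds for $c<\frac1Q S^{Q/2}$, where $S$ is the best Sobolev constant, attained by Garofalo–Vassilev.
\end{itemize}

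The main obstacle is the energy estimate
\[
\sup_{E^-\oplus\mathbb R^+ u_\varepsilon} I<\frac1Q S^{Q/2},
\]
where $u_\varepsilon$ is a cut-off of a concentrating Sobolev minimizer centred at a point where $u_t<0$. We would use the asymptotic estimates of Bonfiglioli–Uguzzoni and the computation of Loiudice:
\[
\|u_\varepsilon\|^2=S^{Q/2}+O(\varepsilon^{Q-2}),\qquad |u_\varepsilon|_2^2\ge c\varepsilon^2 .
\]
The negative term $u_t$ in $(v+u_t)_+$ costs at most $O(\varepsilon^{(Q-2)/2})$. The interaction with the finite-dimensional space $E^-$ produces error terms of order $\varepsilon^{(Q-2)/2}$, which must be dominated by the gain $\lambda c\varepsilon^2$. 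This requires $(Q-2)/2>2$, that is, $Q>6$, which is exactly the dimension restriction in (iii). Finally, $v\ne 0$ because $I(v)\ge\alpha>0=I(0)$, so $u_t+v$ is a second solution.
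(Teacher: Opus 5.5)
Your plan follows the paper's proof step by step. For (i)--(ii) you use the explicit formula $u_t=u_0+\frac{t}{\lambda_1-\lambda}e_1$. For (iii) you use the shift $u=u_t+v$, the splitting $S^1_0(\Omega)=\mathcal{S}^-\oplus\mathcal{S}^+$, the lower bound on a small sphere in $\mathcal{S}^+$ via $(v+u_t)_+\le v_+$, and compactness of Palais--Smale sequences below $\frac1Q S_\G^{Q/2}$. The restriction $Q>6$ comes, as in the paper, from weighing the gain $\lambda\|\pi_+(u_\varepsilon)\|_{L^2(\Omega)}^2\gtrsim\varepsilon^2$ against the $O(\varepsilon^{(Q-2)/2})$ errors controlled by $\|u_\varepsilon\|_{L^1(\Omega)}$ and $\|u_\varepsilon\|_{L^{2^*_Q-1}(\Omega)}^{2^*_Q-1}$. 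As a plan, (iii) is fine.

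The gap is the step you flag yourself in (i)--(ii), and your proposal does not close it. To conclude $u_t\le 0$ on all of $\Omega$ you need $\sup_\Omega |w|/e_1<\infty$. Knowing $w\in L^\infty(\Omega)$ is useless near $\partial\Omega$, where $e_1$ vanishes. Your Hopf/comparison step is only asserted, and the fallback cannot deliver it. For $\lambda<\lambda_1$ it requires $|t|e_1\ge h$ a.e., which fails near any portion of $\partial\Omega$ on which $h$ is bounded below by a positive constant, whatever $t$ is. Smallness of $h$ does not help, because only $h/e_1$ matters.

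You can isolate exactly what is missing. Set $g:=\lambda w+h\in L^\infty(\Omega)$, so that $-\Delta_\G w=g$. The weak maximum principle for $-\Delta_\G$ itself (no sign condition on $\lambda-\lambda_1$ needed) gives $|w|\le\|g\|_{L^\infty(\Omega)}\,\tau$, where $\tau\in S^1_0(\Omega)$ solves $-\Delta_\G\tau=1$. Since $e_1\le\lambda_1\|e_1\|_{L^\infty(\Omega)}\tau$ holds automatically, what you need is precisely the comparability $\tau\le Ce_1$. In the Euclidean case this follows from Hopf's lemma plus $C^1$ regularity up to the boundary; that is what the hypothesis $h\in L^r$, $r>N$, secures in the work of de Figueiredo and Jianfu. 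At characteristic points of $\partial\Omega$ neither ingredient is available in its usual form. So the comparison has to be proved for the domains in question or imposed as a hypothesis.

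The paper's proof has the same hole: it infers $u_t\le0$ from $e_1>0$ and $u_0\in L^\infty(\Omega)$ alone. Since (iii) uses $u_t\le0$ throughout, the second solution rests on the same unproved comparison. The places where it is used are:
\begin{itemize}
\item the sphere estimate;
\item the sign of $\int_\Omega(v_n+u_t)_+^{2^*_Q-1}u_t\,d\xi$ in the boundedness of Palais--Smale sequences;
\item the inequality $J(v)\ge0$ for the weak limit.
\end{itemize}
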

	We next consider the resonant case at the first eigenvalue.
	\begin{thm}\label{thm1.2}
		Assume that $\lambda = \lambda_1$ in problem \eqref{1.1}. 
		Let $f \in L^2(\Omega)$ satisfy	
			$\int_\Omega f e_1 \, d\xi < 0.$
		There exist positive constants $K_1$ and $K_2$ given in equations~\eqref{3.15} and \eqref{3.16}, depending only on 
		$Q$, $\lambda_1$, $\lambda_2$, and the optimal Sobolev constant $S_\G$, such that
		if
		\begin{equation*}
			\|f\|_{L^2(\Omega)} \le K_1
			\quad \text{and} \quad
			-\int_\Omega f e_1 \, d\xi < K_2 ,
		\end{equation*}
		then problem \eqref{1.1} admits at least one weak solution in $\h$.
	\end{thm}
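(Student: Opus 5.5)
We obtain the solution as a local minimizer of the energy functional
\[
I(u)=\frac12\int_\Omega |\nabla_\G u|^2\,d\xi-\frac{\lambda_1}{2}\int_\Omega u^2\,d\xi-\frac{1}{\2}\int_\Omega u_+^{\2}\,d\xi-\int_\Omega f u\,d\xi
\]
on $\h$. We minimize $I$ in a suitable bounded set and show that the minimizer is interior, in the spirit of de Figueiredo and Jianfu. Decompose $u=s e_1+w$ with $s\in\R$ and $w\in\mathrm{span}\{e_1\}^\perp$. On $w$ the quadratic part is coercive:
\[
\|w\|^2_{\h}-\lambda_1\|w\|_{L^2}^2\ge\Bigl(1-\frac{\lambda_1}{\lambda_2}\Bigr)\|w\|^2_{\h}.
\]
Along $e_1$ the quadratic part vanishes. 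The linear term $-s\int_\Omega fe_1\,d\xi=s\,|\int_\Omega f e_1\,d\xi|$ then penalizes positive $s$ and rewards negative $s$. For $s\to-\infty$ with $w$ bounded, $u_+$ stays controlled, so the only direction with no coercivity is $s\to -\infty$. This is exactly the direction in which $\int fe_1<0$ makes $I$ increase linearly. Consequently $I$ is bounded below on $\{\|w\|_{\h}\le R\}$ for $s\le 0$, up to the critical term.

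\textbf{Step 1 (choice of the set).} Take $B=\{u=se_1+w:\ \|u\|_{\h}\le\rho\}$, or a cylinder adapted to the splitting. Use the Sobolev inequality $\int u_+^{\2}\le S_\G^{-\2/2}\|u\|_{\h}^{\2}$ together with $\|u\|^2_{\h}\le \lambda_1 s^2+\|w\|^2_{\h}$. On the boundary one needs
\[
\inf_{\partial B} I>\inf_B I .
\]
Test with $u=-\varepsilon e_1$, giving $I(-\varepsilon e_1)=\varepsilon\int_\Omega fe_1<0$, which is small when $|\int fe_1|<K_2$. On the boundary, bound $I$ below by $c(\lambda_1,\lambda_2)\|w\|^2-C\rho^{\2}-\|f\|_{L^2}\rho\,\lambda_1^{-1/2}$ and compare. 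Choosing $\rho$ in terms of $S_\G,Q,\lambda_1,\lambda_2$, and then requiring $\|f\|_{L^2}\le K_1$ and $-\int fe_1<K_2$, gives the strict inequality. The delicate point is the part of $\partial B$ where $w$ is small and $s=\pm\rho/\sqrt{\lambda_1}$. For $s<0$ we have $u_+$ small and the linear term positive. For $s>0$ we use the smallness of $\rho$ so that the critical term is absorbed, the sign of $\int fe_1$ being favourable there. This bookkeeping produces the explicit $K_1,K_2$ of \eqref{3.15}--\eqref{3.16}.

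\textbf{Step 2 (minimization and the main obstacle).} Let $m=\inf_B I<0$ and take a minimizing sequence. By Ekeland's principle we may assume it is a Palais--Smale sequence for $I$ at level $m$ lying in the interior. It is bounded in $\h$, so up to a subsequence $u_n\rightharpoonup u$ weakly, strongly in $L^2$, and a.e. The main obstacle is the lack of compactness of the embedding into $L^{\2}$. We handle it by a Brézis--Lieb splitting: set $v_n=u_n-u$, so that
\[
I(u_n)=I(u)+\tfrac12\|v_n\|^2-\tfrac1{\2}\int (v_n)_+^{\2}+o(1),
\]
and $\|v_n\|^2-\int (v_n)_+^{\2}\to0$ from $I'(u_n)\to0$. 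If $\|v_n\|^2\to\ell>0$, then $\ell\ge S_\G^{Q/2}$, which contradicts $\|u_n\|\le\rho$ once $\rho$ is small compared with $S_\G^{Q/4}$. This is another constraint absorbed into $K_1,K_2$. Alternatively, the energy gap $\frac1Q S_\G^{Q/2}$ gives $I(u)\le m-\frac1Q S_\G^{Q/2}<m$ with $u\in B$, a contradiction. Hence $u_n\to u$ strongly, $u$ attains $m$ in the interior of $B$, and $I'(u)=0$. Therefore $u$ is a weak solution of \eqref{1.1} with $\lambda=\lambda_1$.
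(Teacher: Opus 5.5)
There is a genuine gap. Your geometry along $e_1$ has the wrong sign, and the correct geometry rules out finding the solution by minimizing $I$.

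\textbf{The sign error.} With $u=se_1+w$ the linear term is $-\int_\Omega fu\,d\xi = s\,\bigl|\int_\Omega fe_1\,d\xi\bigr| - \int_\Omega fw\,d\xi$. This is \emph{negative} for $s<0$, so $I(se_1+w)\to-\infty$ linearly as $s\to-\infty$. Your own computation $I(-\varepsilon e_1)=\varepsilon\int_\Omega fe_1\,d\xi<0$ already shows this. As $s\to+\infty$ the critical term also sends $I$ to $-\infty$. So $I$ is not bounded below on $\{\|w\|_{\h}\le R,\ s\le 0\}$. On $\partial B$, the part with $s<0$ is where $I$ is smallest, not where it is large.

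For every $u$, the function $h(s)=I(u+se_1)$ satisfies
\[
h'(s)=-\int_\Omega fe_1\,d\xi-\int_\Omega (u+se_1)_+^{\2-1}e_1\,d\xi,\qquad h''(s)=-(\2-1)\int_\Omega (u+se_1)_+^{\2-2}e_1^2\,d\xi\le 0 .
\]
This has two consequences.

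First, on any ball or cylinder, $I$ restricted to each segment parallel to $e_1$ is concave, so its minimum sits at an endpoint. Hence $\inf_B I=\inf_{\partial B}I$ always. Concretely, $I(-\rho\lambda_1^{-1/2}e_1)=\rho\lambda_1^{-1/2}\int_\Omega fe_1\,d\xi<I(-\varepsilon e_1)$ for $0<\varepsilon<\rho\lambda_1^{-1/2}$. So the inequality $\inf_{\partial B}I>\inf_B I$ of Step 1 fails for every choice of $\rho,K_1,K_2$. Ekeland's principle in Step 2 then gives no interior Palais--Smale sequence.

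Second, and more fundamentally, $I$ has no local minimizer at all. At a critical point, $h'(0)=0$ forces $\int_\Omega u_+^{\2-1}e_1\,d\xi=-\int_\Omega fe_1\,d\xi>0$. Hence $u_+\not\equiv 0$ and $h''(0)<0$. Every solution is a strict local maximum along $e_1$, so no choice of constraint set can rescue a direct minimization of $I$.

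\textbf{The missing idea} is the saddle-point reduction used in the paper: maximize in the $e_1$ direction first, then minimize over $\mathcal{S}^+$.
\begin{itemize}
\item For fixed $v\in\mathcal{S}^+$, concavity together with $\int_\Omega fe_1\,d\xi\neq 0$ gives a unique maximizer $t(v)$ of $t\mapsto I(v+te_1)$. It is $C^1$ in $v$ by the implicit function theorem.
\item One then minimizes $F(v)=I(v+t(v)e_1)$ over $\{v\in\mathcal{S}^+:\|v\|_{\h}\le\sigma_0\}$.
\item On the sphere, $F(v)\ge I(v)\ge\Phi(\sigma_0)\ge\alpha_0$. This is where $\|f\|_{L^2(\Omega)}\le K_1$ and the $\lambda_2$-coercivity on $\mathcal{S}^+$ enter.
\item At the origin, $F(0)=\frac{Q+2}{2Q}\,t(0)\,\bigl|\int_\Omega fe_1\,d\xi\bigr|$. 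The condition $-\int_\Omega fe_1\,d\xi<K_2$ pushes this below $\alpha_0$ and below $\frac1Q S_\G^{Q/2}$.
\item Since $\partial_t I(v+t(v)e_1)=0$, an Ekeland sequence $v_n$ for $F$ yields a Palais--Smale sequence $v_n+t(v_n)e_1$ for $I$.
\end{itemize}
From there your Step 2 compactness argument (Br\'ezis--Lieb splitting, then excluding a bubble via the small radius $\sigma_0^2<S_\G^{Q/2}$ or the energy gap) applies essentially unchanged.
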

	We now analyze the bifurcation behavior of the branch of nonpositive solutions near higher eigenvalues. The following result shows that $\lambda_k$ with $k>1$, is a bifurcation point for problem \eqref{1.1}.
	\begin{thm} \label{thm1.3}
		Let $h \in \ker(-\Delta_\G - \lambda_kI)^\perp$ with $k > 1$.  
		In the space $\mathbb{R} \times {S}^1_0(\Omega)$, consider the curve $(\lambda, u_t(\lambda))$ 
		of nonnpositive solutions to \eqref{1.1}, obtained from Theorem \ref{thm1.1} for $\lambda$ close to $\lambda_k$. Then $(\lambda_k, u_t(\lambda_k))$ is a bifurcation point.
	\end{thm}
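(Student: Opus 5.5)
We follow the de Figueiredo--Jianfu strategy: reduce \eqref{1.1} to a problem with a trivial branch, and then apply a variational bifurcation theorem for potential operators (Rabinowitz/B\"ohme--Marino) at the eigenvalue $\lambda_k$.

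\textbf{Step 1: the branch of nonpositive solutions.} The nonpositive solution of Theorem~\ref{thm1.1}(ii) is obtained, for $\lambda$ near $\lambda_k$, from the linear problem $-\Delta_\G w=\lambda w + h$. Since $h\in\ker(-\Delta_\G-\lambda_k I)^\perp$, this problem is solvable also at $\lambda=\lambda_k$ in the orthogonal complement of the eigenspace. We then set $u_t(\lambda)= -\frac{t}{\lambda-\lambda_1}e_1 + w_\lambda$, where $w_\lambda\perp \ker(-\Delta_\G-\lambda_k I)$ solves the linear problem. The map $\lambda\mapsto w_\lambda$ is continuous near $\lambda_k$, because the resolvent restricted to the complement of the eigenspace is bounded uniformly there. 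Using the regularity of Section~\ref{sec1} ($h\in L^\infty$ gives $w_\lambda\in L^\infty$ uniformly), $u_t(\lambda)\le 0$ for $t$ large, uniformly for $\lambda$ in a neighbourhood of $\lambda_k$. Hence $(u_t(\lambda))_+=0$ and $u_t(\lambda)$ solves \eqref{1.1}. This gives a continuous curve in $\R\times\h$ passing through $(\lambda_k,u_t(\lambda_k))$.

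\textbf{Step 2: reduction to a trivial branch.} We write $u=u_t(\lambda)+v$, so that $v$ solves
\[
-\Delta_\G v=\lambda v + \big(u_t(\lambda)+v\big)_+^{\2-1},\qquad v\in\h .
\]
Here $v=0$ is a solution for every $\lambda$. Denote by $g_\lambda(\xi,s)=(u_t(\lambda)(\xi)+s)_+^{\2-1}$, with primitive $G_\lambda$. Because $u_t\le -\delta<0$ on compact subsets but $u_t$ vanishes on $\partial\Omega$, the estimate $g_\lambda(\xi,s)=o(|s|)$ cannot be taken pointwise uniformly. Instead we use $0\le g_\lambda(\xi,s)\le |s|^{\2-1}$, which gives $G_\lambda(\xi,s)\le |s|^{\2}/\2$. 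Therefore
\[
\int_\Omega G_\lambda(\xi,v)\,d\xi = o(\|v\|^2_{\h}) \quad\text{as } v\to0,
\]
by the Sobolev inequality. The functional
\[
\Phi_\lambda(v)=\tfrac12\|v\|^2_{\h}-\tfrac\lambda2\|v\|_{L^2}^2-\int_\Omega G_\lambda(\xi,v)\,d\xi
\]
is $C^1$ (in fact $C^2$ with vanishing Hessian of the nonlinear part at $0$), and $\Phi_\lambda'(v)=v-\lambda Kv - N_\lambda(v)$ with $K=(-\Delta_\G)^{-1}$ compact and $N_\lambda(v)=o(\|v\|)$.

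\textbf{Step 3: bifurcation.} We apply the variational bifurcation theorem of B\"ohme--Marino--Rabinowitz in the form allowing parameter-dependent higher-order terms, or we freeze $u_t(\lambda_k)$ and treat the difference as a small perturbation. This theorem requires only that $\Phi_\lambda$ be $C^2$ near $0$ with a compact linear part, and does not need compactness of $N_\lambda$; this is important, since $N_\lambda$ is critical and not compact. The conclusion is that $(\lambda_k,0)$ is a bifurcation point for $v$. In particular, for each small $r>0$ we get at least one (indeed, by the multiplicity statement, $\ge$ the number of eigenfunctions) nontrivial critical points $v$ with $\|v\|=r$ on which $\Phi_\lambda$ is constrained. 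Translating back, $(\lambda_k,u_t(\lambda_k))$ is a bifurcation point of \eqref{1.1}.

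\textbf{Main obstacle.} The main obstacle is the lack of compactness in Step 3, together with the $\lambda$-dependence of the nonlinearity. We would handle this by the Lyapunov--Schmidt reduction onto the finite-dimensional eigenspace $E_k$. On $E_k^\perp$ the operator $I-\lambda K$ is invertible for $\lambda$ near $\lambda_k$, so the implicit function theorem, which needs only $C^1$ smoothness and not compactness, solves the complementary equation. This leaves a finite-dimensional reduced functional with $0$ a critical point whose quadratic part changes sign as $\lambda$ crosses $\lambda_k$. A standard finite-dimensional argument, for instance Lusternik--Schnirelmann on small spheres, then gives nontrivial critical points with $\lambda\to\lambda_k$.

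Smoothness of $v\mapsto\int G_\lambda(\xi,v)$ at the critical exponent is only $C^1$ with derivative $o(\|v\|)$, since $\2-1$ may be less than $2$ when $Q>6$. This suffices for the implicit function step if we use the contraction version: the map is Lipschitz with small constant near $0$, by H\"older and Sobolev estimates on $|a_+^{\2-1}-b_+^{\2-1}|$.
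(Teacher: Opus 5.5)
Your skeleton matches the paper's. You shift to $u=u_t(\lambda)+v$ so that $v=0$ is a trivial branch of \eqref{2.4}, then use variational (B\"ohme--Marino type) bifurcation for a $C^2$ functional with compact linear part and $o(\|v\|)$ remainder. Two choices differ.

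\textbf{No truncation.} The paper truncates the nonlinearity with the cutoff $\psi$ and applies \cite[Theorem 11.4]{Rabinowitz} to $\Phi$ constrained to $\{\int_\Omega v^2\,d\xi=r^2\}$. It then removes the cutoff by an $L^\infty$ bound for small $r$. You work directly in $S^1_0(\Omega)$ with the untruncated functional, using $0\le g_\lambda(\xi,s)\le|s|^{2^*_Q-1}$ and Sobolev, so no regularity step is needed.

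\textbf{The $\lambda$-dependence.} More importantly, you notice that the nonlinearity depends on $\lambda$ through $u_t(\lambda)$. The quoted theorem treats only $Lu+H(u)=\lambda u$ with $H$ independent of $\lambda$. The paper silently identifies the Lagrange multiplier $\mu$ with the $\lambda$ inside $u_t(\lambda)$. Your Lyapunov--Schmidt reduction onto $E_k=\ker(-\Delta_{\mathbb G}-\lambda_k I)$ is the right repair. $I-\lambda K$ is uniformly invertible on $E_k^\perp$. The contraction estimate $\|N_\lambda(v_1)-N_\lambda(v_2)\|\le C(\|v_1\|+\|v_2\|)^{2^*_Q-2}\|v_1-v_2\|$ holds uniformly in $\lambda$ because $u_t\le0$. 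The price is that you get bifurcation only, not the B\"ohme--Marino conclusion of solutions on every small sphere.

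\textbf{The step you must change} is the finite-dimensional conclusion. Lusternik--Schnirelmann on small spheres is exactly the device that cannot absorb $\lambda$-dependence. Constrained critical points of the reduced functional $\phi_\lambda$ on $\{\|z\|=r\}$ satisfy $\nabla\phi_\lambda(z)=\nu z$ with a multiplier $\nu$ unrelated to $\lambda$; this is the paper's conflation again. Freezing $u_t(\lambda_k)$ also fails: it produces solutions of $-\Delta_{\mathbb G}u=\lambda u+u_+^{2^*_Q-1}+f+(\lambda_k-\lambda)u_t(\lambda_k)$, not of \eqref{1.1}.

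What closes the argument is the sign change you already identified. The reduction gives $\phi_\lambda(z)=\frac12(1-\lambda/\lambda_k)\|z\|^2+o(\|z\|^2)$ uniformly for $\lambda$ near $\lambda_k$. So $0$ is a strict local minimum for $\lambda<\lambda_k$ and a strict local maximum for $\lambda>\lambda_k$, with $d=\dim E_k\ge1$. If $(\lambda_k,0)$ were not a bifurcation point, $0$ would be an isolated critical point of $\phi_\lambda$ in a fixed ball for all $\lambda$ near $\lambda_k$. Continuation invariance of critical groups (or of the Conley index) would then give $C_q(\phi_{\lambda_-},0)\cong C_q(\phi_{\lambda_+},0)$, contradicting $\delta_{q,0}$ versus $\delta_{q,d}$. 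This is Kielh\"ofer's bifurcation theorem for potential operators. Brouwer degree alone would settle only odd $d$.

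\textbf{Two minor corrections.} First, $v\mapsto\int_\Omega G_\lambda(\xi,v)\,d\xi$ is $C^2$ on $S^1_0(\Omega)$ for every $Q$. This holds because $2^*_Q-2>0$ and $v\mapsto(v+u_t)_+^{2^*_Q-2}$ is continuous from $L^{2^*_Q}$ into $L^{Q/2}$; only $C^3$ fails when $Q>6$. So your ``only $C^1$'' remark is wrong, though harmless. Second, the multiplicity ``$\ge$ the number of eigenfunctions'' requires an odd nonlinearity, which $(s+u_t)_+^{2^*_Q-1}$ is not.
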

	
	\subsection{Our approach}
	The proofs of Theorem \ref{thm1.1}-\ref{thm1.3} are based on variational methods applied to the energy functional associated with problem \eqref{1.1}, defined on $\h$. We first establish the existence of a nonpositive solution $u_t$ of problem \eqref{1.1}. We observe that for any nonpositive solution the nonlinear term vanishes, and the equation reduces to a linear problem. The existence of such a solution follows from the Fredholm alternative and uniform bounds, for sufficiently large values of the parameter $t$, with the sign depending on the position of $\lambda$ relative to the first eigenvalue. To obtain a second solution, we consider perturbations around the first solution $u_t$. This leads to a new variational formulation whose critical points correspond to weak solutions of problem \eqref{1.1}. For $0<\lambda<\lambda_1$, the associated energy functional exhibits a mountain-pass geometry. For $\lambda>\lambda_1$, with $\lambda$ not an eigenvalue, we use a linking argument based on the spectral decomposition of the underlying space into finite and infinite-dimensional subspaces. Using suitable spectral inequalities to verify the geometric conditions required by the linking theorem. Due to the critical growth, the Palais-Smale condition does not hold globally in both cases. Therefore, a key step is to show that the minimax level avoids the non-compactness levels associated with concentrating sequences $u_\varepsilon$. In the Carnot group setting, this is particularly delicate because the explicit form of the concentrating functions is not known. We overcome this difficulty by exploiting their asymptotic behavior, which is sufficient to derive the required energy estimates. Another difficulty comes from the presence of the first solution $u_t$ in the critical term, which introduces additional lower-order terms in the energy estimates. To handle this issue, we localize the critical nonlinearity on suitable subsets of $\Omega$ where the concentrating functions are sufficiently large. On these sets, the critical term dominates the perturbation induced by 
	$u_t$, allowing us to control the interaction terms. With this approach, we show that Palais-Smale sequences at the minimax level are bounded and converge to nontrivial limits. As a result, we obtain the existence of a second solution for homogeneous dimension $Q>6$, both in the mountain-pass case and in the linking case. 
	
	The proof of Theorem \ref{thm1.2} deals with the resonant case $\lambda = \lambda_1$. Under suitable smallness and sign assumptions on the function $f$, we show that the associated energy functional is bounded below on an appropriate constraint. A direct minimization argument then yields a nontrivial critical point, which corresponds to a weak solution $u_t$ of problem \eqref{1.1}. 
	
	The proof of Theorem \ref{thm1.3} is based on a bifurcation analysis near higher eigenvalues of the sub-Laplacian. Starting from the branch of nonpositive solutions obtained in Theorem \ref{thm1.1}, we write solutions of \eqref{1.1} in the form $u= u_t(\lambda) +v$, which leads to an equivalent equation for the perturbation $v$. Using the spectral decomposition of the underlying space and a classical bifurcation theorem by Rabinowitz, we show that each eigenvalue $\lambda_k$, $k>1$, is a bifurcation point. Finally, an energy estimate shows that the bifurcating solutions can only occur for $\lambda < \lambda_k$, which determines the direction of bifurcation.
	
	The main features of this work can be summarized as follows.
	
	\begin{itemize}
		\item To the best of our knowledge, Ambrosetti-Prodi type problems have not been previously investigated in the framework of Carnot groups, including the Heisenberg group. This paper provides the first existence, multiplicity, and bifurcation results in this non-Euclidean setting.
		
		\item The analysis is carried out for the sub-Laplacian on Carnot groups, where the operator is degenerate elliptic and many classical tools from elliptic theory are no longer available. The approach relies instead on subelliptic variational methods and the functional framework of Folland-Stein Sobolev spaces.
		
		\item The presence of critical Sobolev growth is treated by adapting Br\'ezis-Nirenberg type compactness arguments to the Carnot group setting. In contrast with the Euclidean case and the Heisenberg group, explicit extremal functions for the critical Sobolev inequality are not known in general Carnot groups, which requires a refined analysis.
		
	\end{itemize}
	
	The paper is organized as follows. In Section~\ref{sec1}, we recall basic notions on Carnot groups, the sub-Laplacian, and the functional framework, including spectral properties and Sobolev-type embeddings. Section~\ref{sec2} establishes the existence of a nonpositive solution and a second solution to problem \eqref{1.1}, using mountain-pass or linking arguments, with the critical growth handled via Br\'ezis-Nirenberg-type estimates adapted to the Carnot group setting. Section~\ref{sec3} deals with the resonant case $\lambda=\lambda_1$, where existence is proved via a direct minimization argument under suitable smallness and sign conditions on $f$. Finally, in Section~\ref{sec4}, we prove the bifurcation near higher eigenvalues.
	
	\section{Preliminaries}\label{sec1}
	The purpose of this section is to recall basic definitions and properties of Carnot groups that will be used throughout the paper. We follow the approach based on homogeneous Lie group structures, which is equivalent to the classical theory of stratified Lie groups.
	\subsection{Carnot groups and Folland-Stein spaces}
	Let $(\mathbb{R}^N,\circ)$ be a Lie group, where $\circ$ denotes the group law.  
	Assume that $(\mathbb{R}^N,\circ)$ is endowed with a family of Lie group automorphisms 
	$\{\delta_\mu\}_{\mu>0}$, called dilations, of the form
	\begin{equation}\label{dilation}
		\delta_\mu(\xi^{(1)},\xi^{(2)},\ldots,\xi^{(r)})
		=
		\bigl(\mu\, \xi^{(1)},\, \mu^2 \xi^{(2)},\, \ldots,\, \mu^r \xi^{(r)}\bigr),
	\end{equation}
	where $\xi^{(i)}\in \mathbb{R}^{N_i}$ for $i=1,\dots,r$ and $N_1+\cdots+N_r=N$. Let $\mathfrak{g}$ be the Lie algebra of $(\mathbb{R}^N,\circ)$, i.e., the space of all left-invariant vector fields. Let $X_1,\dots,X_{N_1}$ be the left-invariant vector fields  on $\G$ satisfying $X_j(0) = \partial/ \partial \xi_j^{(1)} \big|_0$ for $j=1,\dots, N_1$. We assume that these vector fields generate the whole Lie algebra under Lie brackets, i.e.
	$\operatorname{Lie}\{X_1,\dots,X_{N_1}\} = \mathfrak{g}.$
	Under this assumption, we refer to 
	$\mathbb{G} = (\mathbb{R}^N,\circ,\delta_\mu)$
	as a (homogeneous) Carnot group of step $r$, with $N_1$ generators. An equivalent description of a Carnot group is given in terms of the stratification of its Lie algebra. Precisely, a Carnot group can equivalently be defined as a connected, simply connected Lie group whose Lie algebra $\mathfrak{g}$ admits a stratification, i.e., a direct sum decomposition
	\begin{align}
		\label{stratification}
		\mathfrak{g} &= V_1 \oplus V_2 \oplus \cdots \oplus V_r, \quad \text{ satisfying } 
		&\left\{
		\begin{aligned}
			&[V_1, V_j] = V_{j+1}, &&\text{for } 1 \le j \le r-1, \\
			&[V_1, V_r] = \{0\}.
		\end{aligned}
		\right.
	\end{align}
	Here $V_1$ is called the horizontal layer, and $V_r\neq\{0\}$.
	The formulations \eqref{dilation} and \eqref{stratification} are equivalent. In particular,
	every stratified group admits homogeneous dilations of the form \eqref{dilation}, and conversely, every homogeneous Carnot group is isomorphic to a stratified group (see \cite{Bonfiglioli}). The homogeneous dimension of $\mathbb{G}$ is defined by
	$Q \coloneq  \sum_{j=1}^r j\, N_j.$ The sub-Laplacian associated with $\mathbb{G}$ is the second-order differential operator
	\[
	\Delta_{\mathbb{G}} = \sum_{i=1}^{N_1} X_i^2.
	\]   
	A basic example of a Carnot group is the abelian group $\mathbb{G} = (\mathbb{R}^N,+)$. In this case, the stratification is trivial, the homogeneous dimension as $Q=N$, and the associated sub-Laplacian coincides with the usual constant-coefficient elliptic operator on $\mathbb{R}^N$. A fundamental non-commutative example is the Heisenberg group $\mathbb{H}^n = (\mathbb{R}^{2n+1},\circ)$, whose homogeneous dimension is $Q=2n+2$. The group operation is given by $\xi \circ \xi' = \left( x+x',\ y+y',\ t+t' + 2\bigl( x'\cdot y - x \cdot y'\bigr)\right),$ for all $\xi= (x,y,t), \xi'= (x',y',t')\in\mathbb{R}^{2n+1}$, where $x,y,x',y'\in\mathbb{R}^n$ and $t,t'\in\mathbb{R}$.
	
	Now, we collect some basic analytic facts that will be used throughout the paper. Since the horizontal vector fields $X_1,\dots, X_{N_1}$ generate the entire Lie algebra $\mathfrak{g}$ under Lie brackets, they satisfy H\"ormander's condition. Hence, the sub-Laplacian $\Delta_{\mathbb{G}}$ is a hypoelliptic operator. Moreover, $\Delta_\G$ satisfies Bony’s maximum principle (see \cite{Bony}). The vector fields $X_1,\dots,X_{N_1}$ are homogeneous of degree one with respect to the family of dilations $\{\delta_\mu\}_{\mu>0}$ introduced in
	\eqref{dilation}. Moreover, the adjoint operator of $X_i$ is $-X_i$ in $L^2(\mathbb{G})$, and therefore the sub-Laplacian $\Delta_{\mathbb{G}}$ is a self-adjoint operator in divergence form. The Lebesgue measure on $\mathbb{R}^N$ coincides with the Haar measure of $\mathbb{G}$ (In the present work, all integrals are understood with respect to the Haar measure). In particular, it is left-invariant under group translations, and it satisfies the scaling property $|\delta_\mu(E)| = \mu^{Q}\, |E|$ for every measurable $E\subset\mathbb{R}^N$. For a smooth function $u: \G \to \R$, we denote by
	$\nabla_{\mathbb{G}}u \coloneq (X_1 u,\dots,X_{N_1} u)$
	the horizontal (or sub-elliptic) gradient associated with the operator $\Delta_{\mathbb{G}}$.  
	Both $\nabla_{\mathbb{G}}$ and $\Delta_{\mathbb{G}}$ are left-translation invariant and they are homogeneous of degree one and two respectively, with respect to $\delta_\mu$. We recall that a function $\rho: \G\to[0,\infty)$ is called a homogeneous (quasi)norm if
	\begin{itemize}
		\item $\rho(\xi)=0$ iff $\xi=0$;
		\item $\rho(\delta_\mu \xi)=\mu \ \rho(\xi)$ for all $\mu>0$;
		\item $\rho(\xi\circ \eta)\le C \bigl(\rho(\xi) + \rho(\eta)\bigr)$ for some $C\ge1$.
	\end{itemize}
	Such norms on every Carnot group always exist. A celebrated result of Folland \cite{Folland} ensures that
	there is a homogeneous norm $|\cdot|_\G$ and a constant $C_Q>0$ such that
	\begin{equation*}
		\Gamma_\xi(\eta)
		\coloneq
		\frac{C_Q}{|\xi^{-1}\circ \eta|_\G^{\ Q-2}},
		\qquad Q\ge3,
	\end{equation*}
	is a fundamental solution of $-\Delta_\G$ with pole at $\xi$. Homogeneous norms induce left-invariant distances of the form $d(\eta,\xi) = |\xi^{-1}\circ \eta|_\G,$ all of which are equivalent and generate the Euclidean topology on $\G$. Open balls with respect to the homogeneous distance $d$ are denoted by $B_r(\xi) \coloneq \{\eta \in \G : d(\eta,\xi) < r\}$.
	
	We now fix the notation and the function spaces for the problem \eqref{1.1}. Let $2^*_Q \coloneq \frac{2Q}{Q-2}$ denote the critical Sobolev exponent in the setting of Carnot groups.  The corresponding
	critical power in the nonlinearity is $2_Q^*-1 = \frac{Q+2}{Q-2}$, which plays the same
	role as the exponent $\frac{N+2}{N-2}$ in the classical semilinear Poisson equation in $\mathbb{R}^N, (N \geq 3)$. A fundamental role in the functional analysis on Carnot groups is played by the following
	Sobolev-type inequality (see, for instance, \cite{Folland}) 
	\begin{equation}\label{Sobolev}
		\|u\|_{L^{2^*_Q}(\Omega)}^2 \le C \|\nabla_\G u\|_{L^2(\Omega)}^2
		\quad \text{for all } u \in C_0^\infty(\Omega), 
	\end{equation}
	where $\Omega \subset \G$ is open and for some $C>0$. The optimal constant in \eqref{Sobolev} is explicitly known in some special cases, such as the Heisenberg group $\mathbb{H}^n$ (see \cite{Jerison2}).
	Let $\Omega \subset \G$ be an open set, the Folland-Stein Sobolev space associated with a Carnot group $\G$ is given by
	\[
	S^1(\Omega)
	=
	\bigl\{
	u \in L^{2_Q^*}(\Omega) \ : \ \nabla_\G u \in L^2(\Omega)
	\bigr\},
	\]
	and it is endowed with the norm
	\begin{equation}\label{FSnorm}
		\|u\|_{S^1(\Omega)} \coloneq \|u\|_{L^{2_Q^*(\Omega)}} + \|\nabla_\G u\|_{L^2(\Omega)} .
	\end{equation}
	We denote by $S_0^1(\Omega)$ the closure $C_0^\infty(\Omega)$ with respect to the norm \eqref{FSnorm}. By the Sobolev inequality \eqref{Sobolev}, the norm \eqref{FSnorm} is equivalent on $S_0^1(\Omega)$ to the norm induced by the inner product
	\[
	\langle u,v \rangle_{S_0^1(\Omega)}
	\coloneq
	\int_\Omega \langle \nabla_\G u, \nabla_\G v \rangle \, d\xi.
	\]
	As a consequence, $S_0^1(\Omega)$ is a Hilbert space. We emphasize that, for general unbounded
	domains $\Omega$, the space $S_0^1(\Omega)$ is not embedded into $L^2(\Omega)$. A function $u \in S_0^1(\Omega)$ is said to be a weak solution of the
	problem \eqref{1.1} if
	\begin{equation}\label{weak}
		\int_\Omega \langle \nabla_\G u, \nabla_\G \varphi \rangle \, d\xi
		= \lambda \int_\Omega u \ \varphi \ d\xi +
		\int_\Omega u^{2^*_Q-1}_+ \varphi \, d\xi + \int_\Omega f \ \varphi \ d\xi
		\quad \text{for all } \varphi \in S_0^1(\Omega).
	\end{equation}
	Every classical solution of \eqref{1.1} satisfies \eqref{weak}, since the formal adjoint of
	$\nabla_\G$ is $-\nabla_\G$. Finally, we point out that the exponent $2^*_Q$ is critical for the operator $-\Delta_\G$. Indeed, even when $\Omega$ is bounded, the continuous embedding
	\[
	S_0^1(\Omega) \hookrightarrow L^{2^*_Q}(\Omega)
	\]
	fails to be compact. This lack of compactness produces the main difficulty in the study of critical problems on Carnot groups. 
	
	We recall that if $Q \leq 3$, then the group $\mathbb{G}$ reduces to the Euclidean space $\mathbb{R}^N$ equipped with the usual additive structure. Therefore, throughout this paper we assume that the homogeneous dimension satisfies $Q > 3$.

	\subsection{Spectral properties of the sub-Laplacian on Carnot groups}
	Let $\Omega \subset \mathbb G$ be an open
	and a bounded domain. We consider the Dirichlet eigenvalue problem associated with the sub-Laplacian
	$\Delta_{\mathbb G}$:
	\begin{equation*}
		\left\{
		\begin{aligned}
			-\Delta_{\mathbb G} u &= \lambda u \quad &&\text{in } \Omega,\\
			u &= 0 \quad &&\text{on } \partial\Omega.
		\end{aligned}
		\right.
	\end{equation*}
	Since $\Omega$ is bounded, the embedding
	$\h \hookrightarrow L^2(\Omega)$ is compact. As a consequence, the operator $-\Delta_{\mathbb G}$ endowed with
	Dirichlet boundary conditions is self-adjoint on $L^2(\Omega)$ and has a compact resolvent. Therefore, its spectrum consists of a sequence of real eigenvalues
	\[
	0 < \lambda_1 \leq \lambda_2 \le \cdots \le \lambda_k \le \lambda_{k+1} \le \cdots,
	\qquad \lambda_k \to +\infty \quad \text{as } k \to \infty,
	\]
	each having finite multiplicity. The first eigenvalue $\lambda_1$ admits the variational characterization
	\[
	\lambda_1
	= \min_{u \in \h \setminus \{0\}}
	\frac{\|\nabla_{\mathbb G}u\|^2_{L^2(\Omega)}}
	{\|u\|^2_{L^2(\Omega)}}.
	\]
	The minimum is achieved by a function $e_1 \in \h$,
	which can be chosen to be strictly positive in $\Omega$.
	The positivity and the simplicity of $\lambda_1$
	follow from Bony’s maximum principle \cite{Bony} for degenerate operators. More generally, for each $k \in \mathbb N$, the $(k+1)$-th eigenvalue can be
	characterized by the min-max principle as
	\begin{equation}
		\label{2.00}
		\lambda_{k+1}
		= \min_{u \in \mathcal{S}^+ \setminus \{0\}}
		\frac{\|\nabla_{\mathbb G}u\|^2_{L^2(\Omega)}}
		{\|u\|^2_{L^2(\Omega)}},
	\end{equation}
	where
	\[
	\mathcal{S}^+
	\coloneq \bigl\{ u \in \h :
	\langle u, e_j\rangle_{\h} = 0,
	\quad j=1,\dots,k \bigr\}.
	\]
	For each $k \in \mathbb N$, there exists an eigenfunction
	$e_{k+1} \in \mathcal{S}^+$ associated with $\lambda_{k+1}$,
	which attains the minimum in \eqref{2.00}. Finally, the family $\{e_k\}_{k \in \mathbb N}$ forms an orthonormal basis of $L^2(\Omega)$ and an orthogonal basis of $\h$.
	Eigenfunctions of the Dirichlet sub-Laplacian are of class $C^\infty(\Omega)$,
	while their regularity up to $\partial\Omega$ depends on the geometry of the boundary.
	\subsection{Some asymptotic estimates} 
	Garofalo and Vassilev \cite{Garofalo} extended the classical concentration-compactness principle of Lions to the framework of Carnot groups. In particular, they showed that the optimal constant in the critical Sobolev embedding
	\[
	S_0^1(\G) \hookrightarrow L^{2_Q^*}(\G)
	\]
	can be characterized by the variational problem
	\begin{equation}\label{1.1.1}
		S_\G
		\coloneq
		\inf_{u \in S_0^1(\G) \setminus \{0\}}
		\frac{\|\nabla_\G u\|_{L^2(\G)}^2}
		{\|u\|_{L^{2_Q^*}(\G)}^2}.
	\end{equation}
	Moreover, every minimizing sequence for \eqref{1.1.1} is relatively compact in $S_0^1(\G)$, up to the action of left translations and homogeneous dilations on $\G$.
	As a consequence, the infimum in \eqref{1.1.1} is achieved. The attainability of $S_\G$ has direct implications for the associated critical problem. In particular, the equation
	\begin{equation}\label{criticalPDE}
		-\Delta_\G u = u^{2_Q^*-1} \quad \text{in } \G
	\end{equation}
	admits a nontrivial, non-negative solution
	$u \in S_0^1(\G).$ By Bony's maximum principle \cite{Bony}, every non-negative solution of \eqref{criticalPDE} is positive. Bonfiglioli and Uguzzoni in \cite{BonfiglioliUguzzoni} obtained asymptotic estimates at infinity of u in terms of the fundamental solution of $\Delta_\G$. More precisely, they proved that there exists $M>0$ such that
	\begin{equation*}
		u(\xi) \leq M \min\{1, d(\xi)^{2-Q}\}.
	\end{equation*}
	Moreover, there exists $\alpha \in (0,1)$ such that every positive solution $u$ of \eqref{criticalPDE} belongs to $\Gamma^\alpha(\G)$. Here, $\Gamma^\alpha(\G)$ denotes the Lipschitz space adapted to the homogeneous structure of the Carnot group $\G$, introduced by Folland \cite{Folland}. These
	spaces are defined using the group law and homogeneous norm $d$ on $\G$.
	In addition, Loiudice \cite{Loiudice2} proved that if
	$u \in S_0^1(\G)$ is a positive solution of \eqref{criticalPDE}, then there
	exists a constant $C>0$ such that
	\[
	u(\xi) \approx \frac{C}{d(\xi)^{Q-2}}
	\quad \text{as } d(\xi) \to \infty .
	\]
	Now, let $U$ be a fixed minimizer of \eqref{1.1.1} and for $\varepsilon > 0$, consider the rescaled family
	\[
	U_\varepsilon(\xi) = \varepsilon^{\frac{2-Q}{2}} \, U\left(\delta_{1/\varepsilon} \xi\right).
	\]
	Up to multiplicative constants, the functions $U_\varepsilon$ are solutions of
	$- \Delta_\G u = u^{2^*_Q-1} \quad \text{in } \G.$
	Moreover, they satisfy
	\[
	\|\nabla_\G U_\varepsilon\|_{L^2(\G)}^2 
	= \|U_\varepsilon\|_{L^{2^*_Q}(\G)}^{2^*_Q} 
	= S_\G^{Q/2}, \qquad \forall \, \varepsilon > 0.
	\]
	Let $R > 0$ be such that $B_d(0,R) \subset \Omega$ (we may assume $0 \in \Omega$). 
	Take $\varphi \in C_0^\infty(B_d(0,R))$ with $0 \leq \varphi \leq 1$ and 
	$\varphi \equiv 1$ in $B_d(0,R/2)$. Set
	\[
	u_\varepsilon(\xi) \coloneq \varphi(\xi) \ U_\varepsilon(\xi)\ \text{ for } \ \xi \in \G. 
	\]
	We now derive some estimates for the family $\{u_\varepsilon\}_{\varepsilon>0}$,
	which will play a crucial role in showing that the minimax level of the associated energy functional lies below a suitable threshold. To this end, we recall the following result.
	\begin{lem}\label{lem2}
		As $\varepsilon \to 0$, the functions $u_\varepsilon$ satisfy
		\begin{align*}
			\|\nabla_\G u_\varepsilon\|_{L^2(\Omega)}^2
			&= S_\G^{\frac{Q}{2}} + O(\varepsilon^{Q-2}),\\
			\|u_\varepsilon\|_{L^{2_Q^*}(\Omega)}^{2_Q^*}
			&= S_\G^{\frac{Q}{2}} + O(\varepsilon^{Q}),\\
			\|u_\varepsilon\|_{L^2(\Omega)}^2
			&\ge
			\begin{cases}
				C_1\, \varepsilon^2 + O(\varepsilon^{Q-2}), & \text{if } Q > 4, \\
				C_1\, \varepsilon^2 |\log \varepsilon| + O(\varepsilon^2), & \text{if } Q = 4,
			\end{cases}
		\end{align*}
		where $C_1>0$ is a constant independent of $\varepsilon$.
	\end{lem}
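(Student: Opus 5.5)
The plan is to follow the classical Brézis–Nirenberg computation. The only extra input needed for Carnot groups is the decay information on the extremal $U$ recalled above: $U(\xi)\le M\min\{1,d(\xi)^{2-Q}\}$, and $U(\xi)\approx C\,d(\xi)^{2-Q}$ as $d(\xi)\to\infty$. We need decay of the horizontal gradient as well. For this we use that $U$ is a positive solution of \eqref{criticalPDE} and apply interior estimates for $\Delta_\G$ (hypoellipticity plus homogeneity) on annuli $\{R\le d\le 2R\}$, after rescaling by $\delta_R$. This should give $|\nabla_\G U(\xi)|\le C\,d(\xi)^{1-Q}$ for $d(\xi)\ge 1$. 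We also use the change of variables $\eta=\delta_{1/\varepsilon}\xi$, with Jacobian $\varepsilon^{Q}$, together with homogeneity of degree one of $\nabla_\G$.

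\textbf{Gradient.} Write $\nabla_\G u_\varepsilon=\varphi\nabla_\G U_\varepsilon+U_\varepsilon\nabla_\G\varphi$. Expanding the square, the main term is bounded by $\int_\G|\nabla_\G U_\varepsilon|^2=S_\G^{Q/2}$. The error $\int_\G(1-\varphi^2)|\nabla_\G U_\varepsilon|^2$ is controlled by $\int_{d\ge R/(2\varepsilon)}|\nabla_\G U|^2\,d\eta\le C\int_{R/(2\varepsilon)}^\infty r^{2-2Q}r^{Q-1}\,dr=O(\varepsilon^{Q-2})$; here we use polar coordinates for homogeneous norms, i.e.\ $|\{d<r\}|=cr^Q$. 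The terms containing $\nabla_\G\varphi$ live on the annulus $R/2\le d\le R$. On that set $U_\varepsilon\le C\varepsilon^{(Q-2)/2}$ and $|\nabla_\G U_\varepsilon|\le C\varepsilon^{(Q-2)/2}$, so these terms are also $O(\varepsilon^{Q-2})$. This gives the first estimate.

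\textbf{$L^{2^*_Q}$ norm.} Similarly, $\int_\G(1-\varphi^{2^*_Q})U_\varepsilon^{2^*_Q}\le\int_{d\ge R/(2\varepsilon)}U^{2^*_Q}\le C\int_{R/(2\varepsilon)}^\infty r^{-2Q}r^{Q-1}\,dr=O(\varepsilon^Q)$. Using $\int_\G U_\varepsilon^{2^*_Q}=S_\G^{Q/2}$ then gives the second estimate.

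\textbf{$L^2$ lower bound.} Since $\varphi=1$ on $B_{R/2}$,
\[
\int_\Omega u_\varepsilon^2\ge\int_{B_{R/2}}U_\varepsilon^2=\varepsilon^{2}\int_{d(\eta)<R/(2\varepsilon)}U(\eta)^2\,d\eta .
\]
By the asymptotics of Loiudice there is $R_0$ with $U(\eta)\ge c\,d(\eta)^{2-Q}$ for $d(\eta)\ge R_0$.

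For $Q>4$, $U\in L^2(\G)$ because the tail $\int_{R_0}^\infty r^{4-2Q}r^{Q-1}\,dr$ converges. Hence the integral equals $\|U\|_{L^2(\G)}^2-O(\varepsilon^{Q-4})$, and the whole expression is $C_1\varepsilon^2+O(\varepsilon^{Q-2})$.

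For $Q=4$, the annulus $R_0\le d\le R/(2\varepsilon)$ contributes at least $c\int_{R_0}^{R/(2\varepsilon)}r^{-1}\,dr=c|\log\varepsilon|+O(1)$. This gives $C_1\varepsilon^2|\log\varepsilon|+O(\varepsilon^2)$.

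\textbf{Main obstacle.} The delicate step is the gradient decay $|\nabla_\G U|\lesssim d^{1-Q}$, because no explicit form of $U$ is available. To get it, we would rescale $v_R(\eta)=R^{Q-2}U(\delta_R\eta)$ on the fixed annulus $\{1/2\le d\le 4\}$. There $v_R$ is uniformly bounded, and it solves $-\Delta_\G v_R=R^{-2}\cdot R^{(Q-2)(2-2^*_Q+1)}\cdots$, a bounded right-hand side that in fact tends to zero. Interior subelliptic (Folland) estimates then bound $\nabla_\G v_R$ uniformly, and scaling back yields the claim. If this route proves awkward, we would instead bound only $\int_{d\ge\rho}|\nabla_\G U|^2$. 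For that, test the equation with $U\psi^2$, where $\psi$ is a cutoff supported on $\{d\ge\rho/2\}$ (Caccioppoli). This gives $O(\rho^{2-Q})$ directly from the pointwise bound on $U$, and that is all the argument actually needs.
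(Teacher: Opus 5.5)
Your argument is correct, and it is the same Brezis--Nirenberg cutoff computation that the paper does not carry out itself but takes from \cite[Lemma 3.3]{Loiudice2}, resting on the decay $U\le M\min\{1,d^{2-Q}\}$ and $U\approx d^{2-Q}$ recalled in Section~\ref{sec1}. For the gradient term, your Caccioppoli fallback is the cleaner route: testing $-\Delta_{\mathbb{G}}U_\varepsilon=U_\varepsilon^{2^*_Q-1}$ with $\varphi^2U_\varepsilon$ gives exactly $\|\nabla_{\mathbb{G}}u_\varepsilon\|_{L^2}^2=\int\varphi^2U_\varepsilon^{2^*_Q}\,d\xi+\int U_\varepsilon^2|\nabla_{\mathbb{G}}\varphi|^2\,d\xi$, so no pointwise gradient decay is needed at all (and in your rescaling route the equation is simply $-\Delta_{\mathbb{G}}v_R=R^{-2}v_R^{2^*_Q-1}$).
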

	For the proof, see \cite[Lemma 3.3]{Loiudice2}. Using the functions $u_\varepsilon$ and the asymptotic estimates in Lemma \ref{lem2}, we derive further estimates for $u_\varepsilon$.
	\begin{lem}
		\label{lem}
		For some positive constants $C_2$ and $C_3$ and $\varepsilon \to 0$, we have
		\begin{align*}
			\|u_\varepsilon\|_{L^1(\Omega)} \leq C_2 \varepsilon^{\frac{Q-2}{2}}, \quad
			\|u_\varepsilon\|_{L^{\2-1}(\Omega)}^{\2-1} \leq C_3 \varepsilon^{\frac{Q-2}{2}}.
		\end{align*}
	\end{lem}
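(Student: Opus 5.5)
The plan is to estimate both integrals by a change of variables that reduces them to integrals of the fixed profile $U$, and then control these using the decay bound $U(\eta)\le M\min\{1,d(\eta)^{2-Q}\}$ of Bonfiglioli and Uguzzoni. Since $0\le\varphi\le1$ and $\varphi$ is supported in $B_d(0,R)$, we have $0\le u_\varepsilon\le U_\varepsilon\chi_{B_d(0,R)}$. It therefore suffices to bound $\int_{B_d(0,R)}U_\varepsilon^p\,d\xi$ for $p=1$ and $p=\2-1=\frac{Q+2}{Q-2}$.

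We substitute $\xi=\delta_\varepsilon\eta$. The Jacobian is $\varepsilon^Q$, by the scaling $|\delta_\mu(E)|=\mu^Q|E|$, and the ball transforms as $\delta_{1/\varepsilon}B_d(0,R)=B_d(0,R/\varepsilon)$ because $d$ is homogeneous. This gives
\[
\int_{B_d(0,R)}U_\varepsilon^p\,d\xi=\varepsilon^{Q-\frac{(Q-2)p}{2}}\int_{B_d(0,R/\varepsilon)}U^p(\eta)\,d\eta .
\]
To estimate the last integral we use the polar-type formula for homogeneous norms, $|B_d(0,r)|=r^Q|B_d(0,1)|$. Integrating over dyadic shells $\{2^{j}\le d<2^{j+1}\}$ then shows that $\int_{1\le d(\eta)\le \rho}d(\eta)^{-a}\,d\eta$ is bounded by $C\rho^{Q-a}$ when $a<Q$, by $C\log\rho$ when $a=Q$, and by $C$ when $a>Q$. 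The region $d\le1$ contributes $O(1)$, since $U\le M$ there.

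For $p=1$ we take $a=Q-2<Q$, so $\int_{B_d(0,R/\varepsilon)}U\,d\eta\le C+C(R/\varepsilon)^{2}$. The prefactor is $\varepsilon^{Q-\frac{Q-2}{2}}=\varepsilon^{\frac{Q+2}{2}}$, and multiplying gives
\[
\|u_\varepsilon\|_{L^1(\Omega)}\le C\varepsilon^{\frac{Q+2}{2}-2}=C_2\varepsilon^{\frac{Q-2}{2}} .
\]

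For $p=\2-1$ we take $a=(Q-2)\frac{Q+2}{Q-2}=Q+2>Q$, so $\int_{\G}U^{\2-1}\,d\eta<\infty$ and this integral is bounded uniformly in $\varepsilon$. The prefactor is $\varepsilon^{Q-\frac{Q+2}{2}}=\varepsilon^{\frac{Q-2}{2}}$, which gives
\[
\|u_\varepsilon\|_{L^{\2-1}(\Omega)}^{\2-1}\le C_3\varepsilon^{\frac{Q-2}{2}} .
\]

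The step that needs the most care is the shell-integration bound for $d^{-a}$. On a general Carnot group $d$ is only a homogeneous quasi-norm, not Euclidean, so we cannot use spherical coordinates directly. Instead we would use only the scaling $|B_d(0,r)|=r^Q|B_d(0,1)|$ applied to the dyadic annuli, which avoids any need for a polar coordinate formula. The exponent bookkeeping in the change of variables, namely $\varepsilon^{\frac{2-Q}{2}p}$ from the amplitude and $\varepsilon^{Q}$ from the Jacobian, also has to be checked carefully. Everything else is routine.
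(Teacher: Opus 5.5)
Your proof is correct and follows the paper's argument essentially step for step: you bound $u_\varepsilon\le U_\varepsilon\chi_{B_d(0,R)}$, apply the dilation change of variables to get the factor $\varepsilon^{Q-\frac{(Q-2)p}{2}}$, and integrate the decay $U\lesssim \min\{1,d^{2-Q}\}$ over $B_d(0,R/\varepsilon)$. The only difference is cosmetic: you integrate $d^{-a}$ over dyadic shells using $|B_d(0,r)|=r^Q|B_d(0,1)|$, whereas the paper uses the polar-coordinate formula \eqref{2.10}. That formula follows from the same scaling and holds for any homogeneous quasi-norm, so your worry about spherical coordinates is unnecessary, though harmless.
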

	\begin{proof}
		We recall the polar coordinates formula for radial functions. For $0 \leq r_1 < r_2$ and any measurable function $f:[r_1,r_2]\to \mathbb{R}$,  
		\begin{equation}
			\label{2.10}
			\int_{B_d(0,r_2)\setminus B_d(0,r_1)} f(d(\xi)) \ d\xi
			= Q \ |B_d(0,1)| \int_{r_1}^{r_2} f(\rho)\ \rho^{Q-1} \ d\rho,
		\end{equation}
		whenever at least one of the integrals exists. Now, using the properties of $\varphi$, we have
		\begin{align*}
			\|u_\varepsilon\|_{L^1(\Omega)} & = \int_\Omega \varphi(\xi) \ U_\varepsilon (\xi) \ d\xi \leq \int_{B_d(0,R)} U_\varepsilon (\xi) \ d\xi  = \varepsilon^{\frac{2-Q}{2}}  \int_{B_d(0,R)} U(\delta_{1/\varepsilon}\xi ) \ d\xi.
		\end{align*}
		By the change of variables $\eta = \delta_{1/\varepsilon} \xi$ and choosing $r_0< R/\varepsilon$, the above inequality reduces to evaluating integrals of functions 
		that depend only on the homogeneous distance $d(\cdot)$, i.e. 
		\begin{align*}
			\|u_\varepsilon\|_{L^1(\Omega)} &\leq \varepsilon^{\frac{Q+2}{2}}  \int_{B_d(0, R/ \varepsilon)} U(\eta) \ d\eta\\
			& = \varepsilon^{\frac{Q+2}{2}} \int_{B_d(0,r_0)} U(\eta) \ d\eta + \varepsilon^{\frac{Q+2}{2}}\int_{B_d(0, R/ \varepsilon) \setminus B_d(0,r_0) } U(\eta) \ d\eta \\
			& \leq \varepsilon^{\frac{Q+2}{2}} C_0 + C \varepsilon^{\frac{Q+2}{2}}\int_{B_d(0, R/ \varepsilon) \setminus B_d(0,r_0) } \frac{d\eta}{d(\eta)^{Q-2}}, 
		\end{align*}
		where $C_0, C>0$. Then applying \eqref{2.10} gives
		\begin{align*}
			\|u_\varepsilon\|_{L^1(\Omega)}  & \leq C \varepsilon^{\frac{Q+2}{2}} \left[  1 + \int_{r_0}^{R/ \varepsilon} \frac{1}{\rho^{Q-2}} \rho^{Q-1} \ d\rho \right] = C \varepsilon^{\frac{Q+2}{2}} \left[  1 + \int_{r_0}^{R/ \varepsilon}  \rho \ d\rho \right]. 
		\end{align*}
		By integrating the above equation, we get
		\[ \|u_\varepsilon\|_{L^1(\Omega)} \leq C \varepsilon^{\frac{Q+2}{2}} + C \varepsilon^{\frac{Q-2}{2}} \leq C_2 \varepsilon^{\frac{Q-2}{2}}.\]
		Similarly, we can compute
		\begin{align*}
			\|u_\varepsilon\|^{\2-1}_{L^{\2-1}(\Omega)} 
			\leq \int_{B_d(0,R)} U_\varepsilon (\xi)^{\2-1} \ d\xi  = \varepsilon^{-\frac{Q+2}{2}}  \int_{B_d(0,R)} U(\delta_{1/\varepsilon} \xi)^{\2-1} \ d\xi= \varepsilon^{\frac{Q-2}{2}}  \int_{B_d(0, R/ \varepsilon)} U(\eta)^{\2-1} \ d\eta.
		\end{align*}
		By using Theorem \ref{2.2} and \eqref{2.10}, we have
		\begin{align*}
			\|u_\varepsilon\|^{\2-1}_{L^{\2-1}(\Omega)} 
			& \leq \varepsilon^{\frac{Q-2}{2}} \int_{B_d(0,r_0)} U(\eta)^{\2-1} \ d\eta + C \varepsilon^{\frac{Q-2}{2}}\int_{B_d(0, R/ \varepsilon) \setminus B_d(0,r_0) } \frac{d\eta}{d(\eta)^{Q+2}}  \\
			& \leq C \varepsilon^{\frac{Q-2}{2}} \left[  1 + \int_{r_0}^{R/ \varepsilon} \frac{1}{\rho^{Q+2}} \rho^{Q-1} \ d\rho \right] = C \varepsilon^{\frac{Q-2}{2}} \left[  1 + \int_{r_0}^{R/ \varepsilon}  \rho^{-3} \ d\rho \right].
		\end{align*}
		A direct computation yields
		\[ \|u_\varepsilon\|^{\2-1}_{L^{\2-1}(\Omega)} \leq  C \varepsilon^{\frac{Q-2}{2}} + C \varepsilon^{\frac{Q+2}{2}} \leq  C_3 \varepsilon^{\frac{Q-2}{2}}, \]
		for some $C_3>0$. 
	\end{proof}

	\section{The proof of Theorem \ref{thm1.1}}
	\label{sec2}
	\subsection{Existence of a Nonpositive Solution}
	First, we show that problem \eqref{1.1} admits a nonpositive solution.
	If $u \le 0$ in $\Omega$, then $u_+=0$ and $u$ necessarily satisfies
	\begin{equation}
		\label{2.1}
		\begin{cases}
			-\Delta_\G u = \lambda u + t e_1 + h, & \text{ in } \Omega,\\
			u=0, & \text{ on } \partial \Omega.
		\end{cases}
	\end{equation}
	Consider first the auxiliary problem
	\begin{equation}
		\label{2.2}
		\begin{cases}
			-\Delta_\G u = \lambda u + h, & \text{ in } \Omega,\\
			u=0, & \text{ on } \partial \Omega.
		\end{cases}
	\end{equation}
	If $\lambda<\lambda_1$, problem \eqref{2.2} admits a unique solution.
	If $\lambda>\lambda_1$ and $\lambda\neq\lambda_k$ for all $k\in\mathbb N$, existence and uniqueness follow from the Fredholm alternative.
	If $\lambda=\lambda_k$ for some $k\in\mathbb N$, a solution exists provided
	$h\in\ker(-\Delta_\G-\lambda I)^\perp$, and it is unique in this orthogonal complement. Let $u_0\in\h$ denote a solution of \eqref{2.2}. Moreover, since $h\in L^\infty(\Omega)$, we have $u_0\in L^\infty(\Omega)$. Assume $\lambda\neq\lambda_1$ and let $u_t$ be a solution of \eqref{2.1}. Setting $w\coloneq u_t-u_0$, we obtain
	\[
	-\Delta_\G w = \lambda w + t e_1 \quad \text{in } \Omega, \qquad w=0 \text{ on } \partial\Omega.
	\]
	Since $e_1$ is an eigenfunction associated with $\lambda_1$, we have $w = \frac{t}{\lambda_1-\lambda} e_1,$
	and therefore $u_t = u_0 + \frac{t}{\lambda_1-\lambda} e_1.$
	Using the positivity of $e_1$ and the boundedness of $u_0$,
	we choose $t<0$ if $\lambda<\lambda_1$ and $t>0$ if $\lambda>\lambda_1$,
	with $|t|$ sufficiently large, so that $u_t\le0$ in $\Omega$.
	Hence, \eqref{1.1} admits a nonpositive solution.
	
	\subsection{Existence of a second solution}
	Now, we aim to identify a second solution of equation \eqref{1.1} in the form $u = v + u_t$, with $v$ satisfying  
	\begin{equation}
		\label{2.4}
		-\Delta_\G v = \lambda v + (v + u_t)^{2^*_Q - 1}_+ \quad \text{in } \Omega, 
		\quad v = 0 \ \text{on} \ \partial \Omega.
	\end{equation}
	To achieve this, we search for a critical point of the associated functional
	$J : \h \to \mathbb{R}, $
	defined by  
	\[
	J(v) \coloneq \frac{1}{2} \int_\Omega |\nabla_\G v|^2 \ d\xi
	- \frac{\lambda}{2} \int_\Omega v^2 \ d\xi
	- \frac{1}{2^*_Q} \int_\Omega (v + u_t)_+^{2^*_Q} \ d\xi.
	\]
	Let $\lambda \in (\lambda_k, \lambda_{k+1})$ for some $k \in \mathbb{N}$ (The case $0 < \lambda < \lambda_1$ is treated by the Mountain Pass Theorem as a special case this). We proceed to verify that $J$ satisfies the geometric conditions required by the linking theorem \cite[Theorem 5.1]{Mawhin}. For this, we consider the orthogonal decomposition  
	\[
	\h = \mathcal{S}^- \oplus \mathcal{S}^+, 
	\]
	where  
	\[
	\mathcal{S}^- \coloneq \mathrm{span}\{e_1,\dots,e_k\}, 
	\quad 
	\mathcal{S}^+ \coloneq (\mathcal{S}^-)^\perp 
	= \left\{ u \in \h : 
	\langle u, e_j \rangle_{\h} = 0, 
	\ \forall j=1,\dots,k \right\}.
	\]
	Note that for $0 < \lambda < \lambda_1$, we choose $\mathcal{S}^- = \emptyset$ and $\mathcal{S}^+ = \h$.
	Using the spectral properties of $-\Delta_\G$, we have the following inequalities:
	\[
	\|\nabla_\G v\|_{L^2(\Omega)}^2 \ge \lambda_{k+1} \|v\|_{L^2(\Omega)}^2, 
	\quad \forall v \in \mathcal{S}^+, 
	\qquad
	\|\nabla_\G v\|_{L^2(\Omega)}^2 \le \lambda_k \|v\|_{L^2(\Omega)}^2, 
	\quad \forall v \in \mathcal{S}^-.
	\]
	Next, we define the sets used to verify the linking geometry: 
	\[
	S_\rho \coloneq \partial B_d(0,\rho) \cap \mathcal{S}^+, 
	\]
	\[
	Q \coloneq \left\{ v = w + s z : w \in \mathcal{S}^-, 
	\ \|\nabla_\G w\|_{L^2(\Omega)} \leq r, 
	\ 0 \leq s
	\leq R \right\}, 
	\]
	where $z \in \mathcal{S}^+$, and the constants $0<\rho<R$ and $r>0$. In order to select a suitable $z \in \mathcal{S}^+$ and control the functional $J$ on $Q$, we consider the orthogonal projections of $u_\varepsilon$ as follows:
	
	Let $\pi_+$ and $\pi_-$ denote the orthogonal projections from $\h$ onto $\mathcal{S}^+$ and $\mathcal{S}^-$, respectively. Using these projections, we have the following estimates:
	
	\begin{lem} 
		\label{lem3}  
		There exists a constant $C>0$ such that for all sufficiently small $\varepsilon>0$, the following estimates hold:
		\begin{align*}
			\|\pi_+(u_\varepsilon)\|_{L^1(\Omega)} &\le C \varepsilon^{\frac{Q-2}{2}}, \\
			\|\pi_+(u_\varepsilon) \|_{L^{2^*_Q-1}(\Omega)}^{2^*_Q-1} &\le C \varepsilon^{\frac{Q-2}{2}},  \\
			\left|\int_\Omega \left(|\pi_+(u_\varepsilon) |^{2^*_Q} - |u_\varepsilon|^{2^*_Q}\right) \, d\xi\right| &\le C \varepsilon^{Q-2},  \\
			\left|\int_\Omega \left(|\nabla_\G u_\varepsilon|^2 - |\nabla_\G (\pi_+(u_\varepsilon))|^2 \right) \, d\xi\right| &\le C \varepsilon^{Q-2}. 
		\end{align*}
	\end{lem}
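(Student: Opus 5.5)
We write $\pi_-(u_\varepsilon)=\sum_{j=1}^k a_j(\varepsilon) e_j$. Since $\{e_j\}$ is $L^2$-orthonormal and $\h$-orthogonal with $\|e_j\|_{\h}^2=\lambda_j$, the coefficients are
\[
a_j(\varepsilon)=\frac{\langle u_\varepsilon,e_j\rangle_{\h}}{\lambda_j}=\int_\Omega u_\varepsilon e_j\,d\xi .
\]
The key observation is that $\pi_-(u_\varepsilon)$ is small. Because each $e_j\in L^\infty(\Omega)$, Lemma~\ref{lem} gives
\[
|a_j(\varepsilon)|\le \|e_j\|_{L^\infty}\|u_\varepsilon\|_{L^1(\Omega)}\le C\varepsilon^{\frac{Q-2}{2}} .
\]
The boundedness of the $e_j$ follows from subelliptic regularity and Moser iteration, in the same way the paper uses $u_0\in L^\infty$. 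Since $\mathcal S^-$ is finite dimensional, every norm of $\pi_-(u_\varepsilon)$ is then $O(\varepsilon^{\frac{Q-2}{2}})$. This holds in particular for the $L^1$, $L^{2^*_Q-1}$, $L^{2^*_Q}$ and $L^\infty$ norms and for the $\h$ norm.

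The first two estimates follow from $\pi_+(u_\varepsilon)=u_\varepsilon-\pi_-(u_\varepsilon)$ and the triangle inequality. For the $L^1$ bound we combine Lemma~\ref{lem} with the above. For the $L^{2^*_Q-1}$ bound we use $|a+b|^{p}\le 2^{p-1}(|a|^p+|b|^p)$ with $p=2^*_Q-1$. The $\pi_-$ contribution is $O(\varepsilon^{\frac{(Q-2)(Q+2)}{2(Q-2)}})=O(\varepsilon^{\frac{Q+2}{2}})$, which is smaller than $\varepsilon^{\frac{Q-2}{2}}$.

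The fourth estimate is exact by orthogonality:
\[
\|u_\varepsilon\|_{\h}^2-\|\pi_+(u_\varepsilon)\|_{\h}^2=\|\pi_-(u_\varepsilon)\|_{\h}^2=\sum_{j\le k}\lambda_j a_j^2\le C\varepsilon^{Q-2}.
\]

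For the third estimate we apply the mean value inequality $\bigl||a|^p-|b|^p\bigr|\le p(|a|+|b|)^{p-1}|a-b|$ with $p=2^*_Q$, $a=\pi_+(u_\varepsilon)$ and $b=u_\varepsilon$, so that $|a-b|=|\pi_-(u_\varepsilon)|\le C\varepsilon^{\frac{Q-2}{2}}$ pointwise. We then expand $(|u_\varepsilon|+|\pi_-(u_\varepsilon)|)^{2^*_Q-1}\le C(u_\varepsilon^{2^*_Q-1}+|\pi_-(u_\varepsilon)|^{2^*_Q-1})$. This bounds the integral by
\[
C\|\pi_-(u_\varepsilon)\|_{L^\infty}\Bigl(\|u_\varepsilon\|_{L^{2^*_Q-1}}^{2^*_Q-1}+\|\pi_-(u_\varepsilon)\|_{L^{2^*_Q-1}}^{2^*_Q-1}\Bigr)\le C\varepsilon^{\frac{Q-2}{2}}\cdot C\varepsilon^{\frac{Q-2}{2}}=C\varepsilon^{Q-2},
\]
using the second estimate of Lemma~\ref{lem}.

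The main obstacle is justifying $e_j\in L^\infty(\Omega)$ for $j\le k$. Everything else reduces to Lemma~\ref{lem}. If global boundedness up to the boundary were unavailable, a fallback is to bound $a_j$ via $\|u_\varepsilon\|_{L^{(2^*_Q)'}}\|e_j\|_{L^{2^*_Q}}$. That route, however, gives a weaker rate, so I would rely on the standard $L^\infty$ bound for Dirichlet eigenfunctions of $-\Delta_\G$ on bounded domains, obtained by Moser iteration with the Sobolev inequality \eqref{Sobolev}.
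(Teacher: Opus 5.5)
Your proposal is correct and follows essentially the same route as the paper: you bound the coefficients of $\pi_-(u_\varepsilon)$ by $\|e_j\|_{L^\infty}\|u_\varepsilon\|_{L^1(\Omega)}$, then use the triangle inequality, a pointwise mean-value estimate (the paper uses the equivalent integral representation $\int_0^1 \frac{d}{d\theta}|u_\varepsilon-\theta\pi_-(u_\varepsilon)|^{2^*_Q}\,d\theta$), and $S^1_0(\Omega)$-orthogonality for the gradient term. You are also right that the whole argument hinges on the global bound $e_j\in L^\infty(\Omega)$. The paper uses this bound without comment, and Moser iteration is the standard way to justify it.
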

	
	\begin{proof}
		We write $u_\varepsilon = \pi_+(u_\varepsilon)  + \pi_-(u_\varepsilon)$. Since $\mathcal{S}^-$ is finite-dimensional, $\pi_-(u_\varepsilon)$ can be expressed as $\pi_-(u_\varepsilon) = \sum_{i=1}^k \beta_i e_i$, where $\beta_i = \int_\Omega u_\varepsilon e_i \, d\xi$.  Since the first eigenfunction associated with $\lambda_1$ is positive, we have $\pi_{-} u_{\varepsilon} \not\equiv 0$.
		By the equivalence of norms in finite-dimensional spaces and Lemma~\ref{lem}, we have
		\[
		\|\pi_-(u_\varepsilon)\|_{L^2(\Omega)}^2 = \sum_{i=1}^k \beta_i^2
		= \sum_{i=1}^k
		\left(
		\int_{\Omega} u_{\varepsilon} e_i \ d\xi
		\right)^2 \nonumber 
		\leq \left(
		\sum_{i=1}^k \|e_i\|_{L^\infty(\Omega)}^2
		\right)
		\|u_{\varepsilon}\|_{L^1(\Omega)}^2  \le C \|u_\varepsilon\|_{L^1(\Omega)}^2 \le C \varepsilon^{Q-2}, 
		\]
		and $\|\pi_-(u_\varepsilon)\|_{L^\infty(\Omega)} \le C \varepsilon^{(Q-2)/2}.$
		From the triangle inequality, Lemma~\ref{lem}, the $L^1$ and $L^{2^*_Q-1}$ estimates follow immediately 
		\[
		\|\pi_+(u_\varepsilon) \|_{L^1(\Omega)} \le \|u_\varepsilon\|_{L^1(\Omega)} + \|\pi_-(u_\varepsilon)\|_{L^1(\Omega)} \le C \varepsilon^{(Q-2)/2}, 
		\quad 
		\|\pi_+(u_\varepsilon) \|_{L^{2^*_Q-1}(\Omega)}^{2^*_Q-1} \le C \varepsilon^{(Q-2)/2}.
		\]
		Next, using the integral representation
		\[
		|\pi_+(u_\varepsilon)|^{2^*_Q} - |u_\varepsilon|^{2^*_Q} = \int_0^1 \frac{d}{d\theta} |u_\varepsilon - \theta \pi_-(u_\varepsilon)|^{2^*_Q} \, d\theta
		\]
		and standard inequalities for powers of sums, we obtain
		\begin{align*}
			\left|\int_\Omega \left(|\pi_+(u_\varepsilon)|^{2^*_Q} - |u_\varepsilon|^{2^*_Q}\right) d\xi\right| &
			\leq \2 
			\int_{0}^1 
			\int_{\Omega}
			\left| u_{\varepsilon} - \theta \pi_{-}u_{\varepsilon} \right|^{\2-1}
			\left| \pi_{-}u_{\varepsilon} \right| \ d\xi  \ d\theta \\ 
			& \leq \2 (\2-1)
			\int_{0}^1 
			\int_{\Omega}
			\left( |u_{\varepsilon}|^{\2-1} + \theta^{\2-1} |\pi_{-}u_{\varepsilon}|^{\2-1} \right)
			|\pi_{-}u_{\varepsilon}| \ d\xi \ d\theta   \\
			&\le C \left( \|u_\varepsilon\|_{L^{2^*_Q-1}(\Omega)}^{2^*_Q-1} \|\pi_-(u_\varepsilon)\|_{L^\infty(\Omega)} + \|\pi_-(u_\varepsilon)\|_{L^{2^*_Q}(\Omega)}^{2^*_Q} \right) \le C \varepsilon^{Q-2}.
		\end{align*}
		Finally, by orthogonality of the projections,
		\[
		\|\nabla_\G u_\varepsilon\|_{L^2(\Omega)}^2 - \|\nabla_\G (\pi_+(u_\varepsilon))\|_{L^2(\Omega)}^2 = \|\nabla_\G (\pi_-(u_\varepsilon))\|_{L^2(\Omega)}^2 \le C \varepsilon^{Q-2},
		\]
		which proves the gradient estimate.
	\end{proof}
	For a fixed constant $K>0$, define
	\[
	A_{\varepsilon,K}
	\coloneq \left\{ \xi \in \Omega : \pi_{+} (u_{\varepsilon}(\xi)) > K \right\}.
	\]
	We first observe that $A_{\varepsilon,K}$ contains a small ball centered at the origin.
	Indeed, by the definition of $u_\varepsilon$ and the estimate on $\|\pi_-(u_\varepsilon)\|_{L^\infty(\Omega)}$, we have
	\[
	\pi_{+} u_{\varepsilon}(0)
	= u_{\varepsilon}(0) - \pi_{-} u_{\varepsilon}(0)
	\ge C \varepsilon^{-(Q-2)/2} - C \varepsilon^{(Q-2)/2}.
	\]
	Hence $\pi_{+} u_{\varepsilon}(0) \to +\infty$ as $\varepsilon \to 0$.  
	Since both $u_\varepsilon$ and $\pi_-(u_\varepsilon)$ are continuous, $\pi_+(u_\varepsilon)$ is continuous as well.  
	Therefore, there exists $\delta>0$, independent of $\varepsilon$, such that
	\[
	B_d(0,\delta) \subset A_{\varepsilon,K}.
	\]
	
	\begin{lem}
		\label{lem4}
		As $\varepsilon \to 0$, the following estimates hold:
		\begin{align*}
			\int_{A_{\varepsilon,K}} |\pi_{+} u_{\varepsilon}|^{2^*_Q} \, d\xi
			&= \int_{\Omega} |u_{\varepsilon}|^{2^*_Q} \, d\xi
			+ O(\varepsilon^{Q-2}), \\[4pt]
			\int_{A_{\varepsilon,K}} |\pi_{+} u_{\varepsilon}|^{2^*_Q-1} \, d\xi
			&= \int_{\Omega} |u_{\varepsilon}|^{2^*_Q-1} \, d\xi
			+ O(\varepsilon^{\frac{Q+2}{2}}), \\[4pt]
			\int_{A_{\varepsilon,K}} |\pi_{+} u_{\varepsilon}| \, d\xi
			&= \int_{\Omega} |u_{\varepsilon}| \, d\xi
			+ O(\varepsilon^{\frac{Q-2}{2}}). 
		\end{align*}
	\end{lem}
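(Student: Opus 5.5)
For each of the three estimates I will split the difference into two pieces: the error from replacing $\pi_+u_\varepsilon$ by $u_\varepsilon$ on $A_{\varepsilon,K}$, and the integral of $|u_\varepsilon|^p$ over the complement $\Omega\setminus A_{\varepsilon,K}$. Concretely, for $p\in\{2^*_Q,\,2^*_Q-1,\,1\}$,
\[
\int_{A_{\varepsilon,K}}|\pi_+u_\varepsilon|^p\,d\xi-\int_\Omega|u_\varepsilon|^p\,d\xi
=\int_{A_{\varepsilon,K}}\bigl(|\pi_+u_\varepsilon|^p-|u_\varepsilon|^p\bigr)\,d\xi-\int_{\Omega\setminus A_{\varepsilon,K}}|u_\varepsilon|^p\,d\xi .
\]

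\textbf{The comparison term on $A_{\varepsilon,K}$.} I will reuse the mean-value argument from the proof of Lemma~\ref{lem3}, restricted to $A_{\varepsilon,K}$, together with the bound $\|\pi_-u_\varepsilon\|_{L^\infty(\Omega)}\le C\varepsilon^{(Q-2)/2}$. For $p\ge1$ this gives
\[
\bigl||\pi_+u_\varepsilon|^p-|u_\varepsilon|^p\bigr|\le C\bigl(|u_\varepsilon|^{p-1}+|\pi_-u_\varepsilon|^{p-1}\bigr)|\pi_-u_\varepsilon| .
\]
\begin{itemize}
\item For $p=2^*_Q$, integrating and using Lemma~\ref{lem} yields $C\varepsilon^{(Q-2)/2}\cdot\varepsilon^{(Q-2)/2}=O(\varepsilon^{Q-2})$.
\item For $p=1$ the difference is at most $\|\pi_-u_\varepsilon\|_{L^1}\le C\varepsilon^{(Q-2)/2}$.
\item For $p=2^*_Q-1$ the bound involves $\|u_\varepsilon\|_{L^{2^*_Q-2}}^{2^*_Q-2}\,\|\pi_-u_\varepsilon\|_\infty$.
\end{itemize}
In the last case I will compute $\int U_\varepsilon^{4/(Q-2)}$ by the polar formula \eqref{2.10}, exactly as in Lemma~\ref{lem}. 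This gives roughly $\varepsilon^{2}$ times a quantity that is at most logarithmic, so the comparison term is $O(\varepsilon^{(Q+2)/2})$ up to a logarithm. I expect the exponent bookkeeping here to need care, and if necessary I will use $Q>6$ to absorb the logarithm.

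\textbf{The complement $\Omega\setminus A_{\varepsilon,K}$.} On this set $\pi_+u_\varepsilon\le K$, so $u_\varepsilon\le K+C\varepsilon^{(Q-2)/2}\le 2K$ for small $\varepsilon$. Moreover the complement lies outside $B_d(0,\delta)$, and there the decay bound $U(\eta)\le M\,d(\eta)^{2-Q}$ gives $u_\varepsilon(\xi)\le U_\varepsilon(\xi)\le C\varepsilon^{(Q-2)/2}\delta^{2-Q}$ pointwise. Integrating over the bounded set $\Omega$:
\begin{itemize}
\item for $p=1$ this gives $O(\varepsilon^{(Q-2)/2})$;
\item for $p=2^*_Q$ it gives $O(\varepsilon^{Q})\subset O(\varepsilon^{Q-2})$;
\item for $p=2^*_Q-1$ it gives $O(\varepsilon^{(Q+2)/2})$.
\end{itemize}
All three match the claimed orders. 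The uniform pointwise decay of $U_\varepsilon$ away from the origin is the key input, and it is available from Bonfiglioli--Uguzzoni.

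\textbf{Main obstacle.} I expect the hardest step to be the comparison term for $p=2^*_Q-1$, that is, showing that the cross term $\int|u_\varepsilon|^{2^*_Q-2}|\pi_-u_\varepsilon|$ is $O(\varepsilon^{(Q+2)/2})$. My first attempt will be to split the integral into $B_d(0,\delta)$ and its complement. On the ball I will use the scaling $\int_{B_d(0,\delta)}U_\varepsilon^{4/(Q-2)}\,d\xi=\varepsilon^{2}\int_{B_d(0,\delta/\varepsilon)}U^{4/(Q-2)}\,d\eta$. On the complement I will use the pointwise decay of $U_\varepsilon$ established above.
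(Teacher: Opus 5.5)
\textbf{There is a genuine gap, and it is in the complement step.} Your splitting and your treatment of the comparison term on $A_{\varepsilon,K}$ match the paper and are correct. For $Q>4$ the cross term for $p=2^*_Q-1$ is exactly $O(\varepsilon^{2})\cdot O(\varepsilon^{\frac{Q-2}{2}})=O(\varepsilon^{\frac{Q+2}{2}})$ with no logarithm. A logarithm appears only at $Q=4$, and in any case $Q>6$ is not a hypothesis of this lemma.

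You take over the claim that $\Omega\setminus A_{\varepsilon,K}$ lies outside a ball $B_d(0,\delta)$ with $\delta$ independent of $\varepsilon$, and this is false. Your own decay bound shows why. The estimate $u_\varepsilon(\xi)\le M\varepsilon^{\frac{Q-2}{2}}d(\xi)^{2-Q}$, together with $\|\pi_-u_\varepsilon\|_{L^\infty(\Omega)}\le C\varepsilon^{\frac{Q-2}{2}}$, gives $\pi_+u_\varepsilon<K$ as soon as $d(\xi)\ge\rho_\varepsilon:=(2M/K)^{\frac{1}{Q-2}}\sqrt{\varepsilon}$ and $\varepsilon$ is small. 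Hence $A_{\varepsilon,K}\subset B_d(0,\rho_\varepsilon)$, which shrinks to the origin. The paper justifies the inclusion by continuity of $\pi_+u_\varepsilon$ for each fixed $\varepsilon$, which only produces an $\varepsilon$-dependent radius. The paper's own proof of this lemma rests on exactly the same inclusion $B_d(0,\delta)\subset A_{\varepsilon,K}$.

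\textbf{This cannot be repaired, because the stated rates themselves fail.} The complement contains the annulus $\{\rho_\varepsilon\le d(\xi)\le R/2\}$. There $u_\varepsilon=U_\varepsilon\ge c\,\varepsilon^{\frac{Q-2}{2}}d(\xi)^{2-Q}$, by the asymptotics $U\approx C\,d^{2-Q}$; these apply since $\rho_\varepsilon/\varepsilon\to\infty$. By \eqref{2.10},
\[
\int_{\Omega\setminus A_{\varepsilon,K}}u_\varepsilon^{2^*_Q}\,d\xi\ge c\,\varepsilon^{Q}\rho_\varepsilon^{-Q}= c'\varepsilon^{\frac{Q}{2}},\qquad \int_{\Omega\setminus A_{\varepsilon,K}}u_\varepsilon^{2^*_Q-1}\,d\xi\ge c\,\varepsilon^{\frac{Q+2}{2}}\rho_\varepsilon^{-2}= c'\varepsilon^{\frac{Q}{2}} .
\]
The comparison terms are of smaller order, so:
\begin{itemize}
\item the first identity fails whenever $Q>4$, since $\varepsilon^{Q/2}\gg\varepsilon^{Q-2}$;
\item the second fails for every $Q$, since $\varepsilon^{Q/2}\gg\varepsilon^{(Q+2)/2}$;
\item only the third line holds, and trivially, because both sides are $O(\varepsilon^{\frac{Q-2}{2}})$ by Lemmas \ref{lem} and \ref{lem3}.
\end{itemize}

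\textbf{What you can actually prove.} Start from your correct observation that $u_\varepsilon\le 2K$ on $\Omega\setminus A_{\varepsilon,K}$, and split the complement into two pieces. On $\{d(\xi)<\rho_\varepsilon\}$ the integrand is at most $(2K)^p$ on a set of measure $O(\varepsilon^{Q/2})$. On $\{d(\xi)\ge\rho_\varepsilon\}$ you integrate the decay bound. This gives the first two estimates with error $O(\varepsilon^{Q/2})$. By the lower bound above, that rate is sharp, and it is still $o(\varepsilon^2)$ for $Q>4$.
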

	
	\begin{proof}
		Writing $u_\varepsilon = \pi_+(u_\varepsilon) + \pi_-(u_\varepsilon)$, for any $p \ge 1$ we have
		\[
		\int_{A_{\varepsilon,K}} |\pi_+(u_\varepsilon)|^p \, d\xi
		= \int_{\Omega} |u_\varepsilon|^p \, d\xi
		+ \int_{A_{\varepsilon,K}} \left(|\pi_+(u_\varepsilon)|^p - |u_\varepsilon|^p\right) d\xi
		- \int_{\Omega \setminus A_{\varepsilon,K}} |u_\varepsilon|^p \, d\xi.
		\]
		Using the identity
		\[
		|\pi_+(u_\varepsilon)|^p - |u_\varepsilon|^p
		= -p \int_0^1 |u_\varepsilon - t \pi_-(u_\varepsilon)|^{p-2}
		(u_\varepsilon - t \pi_-(u_\varepsilon))\, \pi_-(u_\varepsilon) \, dt,
		\]
		together with Hölder’s inequality and Lemma~\ref{lem3}, we obtain
		\[
		\left| \int_{A_{\varepsilon,K}}
		\left(|\pi_+(u_\varepsilon)|^p - |u_\varepsilon|^p\right) d\xi \right|
		=
		\begin{cases}
			O(\varepsilon^{Q-2}), & p = 2^*_Q, \\[4pt]
			O(\varepsilon^{\frac{Q+2}{2}}), & p = 2^*_Q - 1, \\[4pt]
			O(\varepsilon^{\frac{Q-2}{2}}), & p = 1.
		\end{cases}
		\]
		Moreover, since $B_d(0,\delta) \subset A_{\varepsilon,K}$, it follows that
		\[
		\int_{\Omega \setminus A_{\varepsilon,K}} |u_\varepsilon|^p \, d\xi
		\le \varepsilon^{Q - \frac{(Q-2)}{2}p}
		\int_{d(\eta) > \delta/\varepsilon} |U(\eta)|^p \, d\eta,
		\]
		which yields
		\[
		\int_{\Omega \setminus A_{\varepsilon,K}} |u_\varepsilon|^p \, d\xi
		=
		\begin{cases}
			O(\varepsilon^{Q}), & p = 2^*_Q, \\[4pt]
			O(\varepsilon^{\frac{Q+2}{2}}), & p = 2^*_Q - 1, \\[4pt]
			O(\varepsilon^{\frac{Q-2}{2}}), & p = 1.
		\end{cases}
		\]
		Combining the above estimates proves Lemma~\ref{lem4}.
	\end{proof}
	Before constructing the linking structure, we first verify that the functional $J$ is bounded from below on small spheres in $\mathcal{S}^+$. The following lemma ensures that $J$ attains a positive level on such a sphere, which will serve as one part of the linking geometry.
	\begin{lem}
		\label{lem1}
		There exist constants $\rho>0$ and $\kappa>0$ such that
		\[
		J(v) \geq \kappa, \quad \forall \, v \in S_\rho \coloneq \partial B_d(0,\rho) \cap \mathcal{S}^+.
		\]
	\end{lem}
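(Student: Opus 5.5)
Note first that $\partial B_d(0,\rho)$ must be read as the sphere of radius $\rho$ in $\h$, i.e. $\{v\in\h:\|\nabla_\G v\|_{L^2(\Omega)}=\rho\}$. The approach is to bound the quadratic part of $J$ from below by the spectral gap on $\mathcal{S}^+$, and to show that the critical term is of higher order in $\rho$. The sign of $u_t$ is what makes the critical term harmless.

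\textbf{Quadratic part.} For $v\in\mathcal{S}^+$ (or $v\in\h$ when $0<\lambda<\lambda_1$) we have $\|\nabla_\G v\|^2_{L^2(\Omega)}\ge\lambda_{k+1}\|v\|^2_{L^2(\Omega)}$. Hence
\[
\frac12\int_\Omega|\nabla_\G v|^2\,d\xi-\frac\lambda2\int_\Omega v^2\,d\xi\ \ge\ \frac12\Bigl(1-\frac{\lambda}{\lambda_{k+1}}\Bigr)\|v\|^2_{\h},
\]
and the coefficient is positive because $\lambda<\lambda_{k+1}$.

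\textbf{Critical part.} Since $u_t\le 0$ in $\Omega$, we have $v+u_t\le v\le v_+$ pointwise, so $0\le (v+u_t)_+\le v_+\le |v|$. Therefore
\[
\frac1{\2}\int_\Omega (v+u_t)_+^{\2}\,d\xi\le \frac1{\2}\|v\|^{\2}_{L^{\2}(\Omega)}\le \frac{1}{\2}S_\G^{-\2/2}\|v\|_{\h}^{\2}.
\]
The last step uses the Sobolev inequality with the optimal constant $S_\G$ from \eqref{1.1.1}. This holds on $\Omega$ by extension by zero, since $S^1_0(\Omega)\subset S^1_0(\G)$.

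\textbf{Conclusion.} Combining the two bounds, on $S_\rho$ we get
\[
J(v)\ge \tfrac12\bigl(1-\tfrac{\lambda}{\lambda_{k+1}}\bigr)\rho^2-\tfrac1{\2}S_\G^{-\2/2}\rho^{\2}.
\]
Because $\2>2$, choosing $\rho$ small makes the right side a positive constant $\kappa$. One can take $\kappa$ equal to half the quadratic term, which requires $\rho^{\2-2}\le \tfrac{\2}{4}\bigl(1-\tfrac{\lambda}{\lambda_{k+1}}\bigr)S_\G^{\2/2}$.

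\textbf{Expected obstacle.} The only real point is the pointwise domination $(v+u_t)_+\le |v|$, which relies on $u_t\le0$ from the first subsection; the rest is routine. I would also check that $\lambda\in(\lambda_k,\lambda_{k+1})$, so that the spectral gap is strict. In the mountain pass case $\lambda<\lambda_1$ the same argument applies with $\lambda_{k+1}$ replaced by $\lambda_1$.
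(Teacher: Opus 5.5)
Your proposal is correct and follows the paper's proof essentially step for step. Both arguments drop $u_t$ via $(v+u_t)_+\le v_+$ (using $u_t\le 0$), bound the quadratic part by the spectral gap $\lambda<\lambda_{k+1}$ on $\mathcal{S}^+$, apply the Sobolev inequality with $S_\G$, and conclude from $2^*_Q>2$. The only cosmetic difference is the choice of radius: the paper takes $\rho$ to be the maximizer of $\Phi(\rho)=\frac12\bigl(1-\frac{\lambda}{\lambda_{k+1}}\bigr)\rho^2-\frac{1}{2^*_Q}S_\G^{-Q/(Q-2)}\rho^{2^*_Q}$, which gives the explicit value $\kappa=\frac1Q S_\G^{Q/2}\bigl(1-\frac{\lambda}{\lambda_{k+1}}\bigr)^{Q/2}$, whereas you take $\rho$ small and set $\kappa$ equal to half the quadratic term.
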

	\begin{proof}
		Let $v \in \mathcal{S}^+$. Since $u_t \leq 0$ and $\lambda \in (\lambda_k, \lambda_{k+1})$, we have
		\[
		J(v) \ge \frac{1}{2} \int_\Omega |\nabla_\G v|^2 \, d\xi - \frac{\lambda}{2} \int_\Omega v^2 \, d\xi - \frac{1}{2^*_Q} \int_\Omega (v)_+^{2^*_Q} \, d\xi.
		\]
		Applying the spectral inequality $\int_\Omega v^2 \, d\xi \leq \frac{1}{\lambda_{k+1}} \int_\Omega |\nabla_\G v|^2 \, d\xi$ for $v \in \mathcal{S}^+$, we get
		\[
		J(v) \geq \frac{1}{2} \left(1 - \frac{\lambda}{\lambda_{k+1}}\right) \int_\Omega |\nabla_\G v|^2 \, d\xi - \frac{1}{2^*_Q} \int_\Omega (v)_+^{2^*_Q} \, d\xi.
		\]
		Further, the Sobolev inequality on the Carnot group gives
		\[
		\int_\Omega (v)_+^{2^*_Q} \, d\xi \leq S_\G^{-\frac{Q}{Q-2}} \left(\int_\Omega |\nabla_\G v|^2 \, d\xi \right)^{2^*_Q/2},
		\]
		so that
		\[
		J(v) \geq \frac{1}{2} \left(1 - \frac{\lambda}{\lambda_{k+1}}\right) \|\nabla_\G v\|_{L^2(\Omega)}^2 - \frac{1}{2^*_Q} S_\G^{-\frac{Q}{Q-2}} \|\nabla_\G v\|_{L^2(\Omega)}^{2^*_Q}.
		\]
		Define the auxiliary function
		\[
		\Phi(\rho) \coloneq \frac{1}{2} \left(1 - \frac{\lambda}{\lambda_{k+1}}\right) \rho^2 - \frac{1}{2^*_Q} S_\G^{-\frac{Q}{Q-2}} \rho^{2^*_Q}, \quad \rho \ge 0.
		\]
		Observe that $\Phi$ is positive near zero, vanishes at zero, and attains its maximum at
		\[
		\rho \coloneq S_\G^{\frac{Q}{4}} \left(1 - \frac{\lambda}{\lambda_{k+1}}\right)^{\frac{Q-2}{4}} > 0.
		\]
		Denote this maximum by
		\[
		\kappa \coloneq \Phi(\rho) = \frac{S_\G^{\frac{Q}{2}}}{Q} \left(1 - \frac{\lambda}{\lambda_{k+1}}\right)^{\frac{Q}{2}} > 0.
		\]
		Thus, for every $v \in S_\rho$, we conclude
		\[
		J(v) \geq \kappa,
		\]
		which proves the lemma.
	\end{proof}
	To establish the linking geometry, we show that the functional $J$ remains strictly below the level $\kappa$ on the boundary of $Q$. To this end, we choose $z \in \mathcal{S}^+$ as follows. Let $u_\varepsilon$ be the bubble introduced earlier and $\pi_+$ the orthogonal projection onto $\mathcal{S}^+$. Set
	\[
	z \coloneq \pi_+(u_\varepsilon),
	\]
	so that $z \in \mathcal{S}^+$. With this choice, the critical term dominates along large values of the parameter $s$, yielding a uniform upper bound for $J$ on $\partial Q$.
	
	\begin{lem}
		\label{lem7}
		There exist constants $r_0>0$, $R_0>0$, and $\varepsilon_0>0$ such that for all $r \ge r_0$, $R \ge R_0$, and $0<\varepsilon \le \varepsilon_0$, we have
		\[
		J(v) < \kappa, \quad \forall v \in \partial Q,
		\]
		where $\kappa>0$ is defined in Lemma \ref{lem1}.
	\end{lem}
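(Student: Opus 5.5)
The boundary $\partial Q$ consists of three pieces, and I will bound $J$ on each separately:
\[
\Gamma_1=\{w\in\mathcal S^-:\|\nabla_\G w\|_{L^2(\Omega)}\le r\}\ (s=0),\qquad
\Gamma_2=\{w+Rz\},\qquad
\Gamma_3=\{w+sz:\|\nabla_\G w\|_{L^2(\Omega)}=r,\ 0\le s\le R\}.
\]
Throughout I use $z=\pi_+(u_\varepsilon)$, the orthogonality $\int_\Omega wz=\int_\Omega\langle\nabla_\G w,\nabla_\G z\rangle=0$ for $w\in\mathcal S^-$, and the fact that by Lemma~\ref{lem3} and Lemma~\ref{lem2} the norms $\|\nabla_\G z\|_{L^2(\Omega)}$ and $\|z\|_{L^{\2}(\Omega)}$ stay bounded above and away from zero uniformly for small $\varepsilon$.

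On $\Gamma_1$, the spectral inequality on $\mathcal S^-$ and $\lambda>\lambda_k$ give
\[
\tfrac12\|\nabla_\G w\|_{L^2(\Omega)}^2-\tfrac\lambda2\|w\|_{L^2(\Omega)}^2\le 0 .
\]
The critical term enters $J$ with a minus sign, so $J(w)\le0<\kappa$. This holds for every $r$, and also in the mountain-pass case $\mathcal S^-=\{0\}$, where $\Gamma_1=\{0\}$ and $J(0)=-\frac1{\2}\int(u_t)_+^{\2}=0$.

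For general $v=w+sz$, orthogonality splits the quadratic part:
\[
J(v)\le \tfrac12\bigl(\|\nabla_\G w\|^2-\lambda\|w\|_2^2\bigr)+\tfrac{s^2}{2}\bigl(\|\nabla_\G z\|^2-\lambda\|z\|_2^2\bigr)-\tfrac1{\2}\int_\Omega(w+sz+u_t)_+^{\2}.
\]
The first bracket is at most $-(\lambda/\lambda_k-1)\|\nabla_\G w\|^2$. The second is at most $\frac{s^2}{2}C_0$, with $C_0$ independent of $\varepsilon$.

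For the critical term I localize to $A_{\varepsilon,K}$, in line with the strategy announced in the introduction. On $\mathcal S^-$ all norms are equivalent, so $\|w\|_{L^\infty(\Omega)}\le C\|\nabla_\G w\|$, and $u_t\in L^\infty(\Omega)$. On $A_{\varepsilon,K}$ we have $z>K$. Hence for $s\ge1$ and $\|\nabla_\G w\|\le\eta s$, with $K$ large and $\eta$ small, we get $w+sz+u_t\ge \frac s2 z$ on $A_{\varepsilon,K}$. Combining with Lemma~\ref{lem4},
\[
\int_\Omega(w+sz+u_t)_+^{\2}\ge c\,s^{\2}\int_{A_{\varepsilon,K}}z^{\2}\ge c' s^{\2}.
\]
Therefore $J\le C_0s^2-c's^{\2}\to-\infty$ as $s\to\infty$.

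In the complementary regime $\|\nabla_\G w\|\ge\eta s$, I drop the critical term and use
\[
J\le -(\lambda/\lambda_k-1)\eta^{-2}\cdot\eta^2\|\nabla_\G w\|^2+\tfrac{C_0}{2}s^2 .
\]
Here I must choose $\eta$ compatibly; I expect instead to argue by comparing with a rescaling or to use $\|w\|_{L^\infty}$ bounded relative to $s$. The cleaner route is to note that $J(w+sz)\le -a\|\nabla_\G w\|^2+C_0s^2/2$ suffices whenever $\|\nabla_\G w\|$ dominates $s$ by a fixed large factor $M$. So I split at $\|\nabla_\G w\|=Ms$, use the localized lower bound for $\|\nabla_\G w\|\le Ms$, and enlarge $K$ depending on $M$.

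With this split, on $\Gamma_3$ (where $\|\nabla_\G w\|=r$) we have:
\begin{itemize}
\item if $s\le r/M$, then $J\le -ar^2+C_0r^2/(2M^2)<0$;
\item if $s\ge r/M$, then $s$ is large once $r$ is large, and $J\le C_0s^2-c's^{\2}<0$.
\end{itemize}
On $\Gamma_2$, with $R\ge R_0$ large, the same two cases give $J<0$. Small $s$ with small $w$ occurs only on $\Gamma_1$. The constants $c',C_0,a$ are uniform in $\varepsilon\le\varepsilon_0$, which gives the stated uniformity in $r\ge r_0$, $R\ge R_0$ and $0<\varepsilon\le\varepsilon_0$.

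\textbf{Main obstacle.} The main obstacle is the uniform lower bound for the critical term. It requires absorbing the sign-indefinite $w$ and the negative $u_t$ into $sz$ on $A_{\varepsilon,K}$. This works only because $A_{\varepsilon,K}\supset B_d(0,\delta)$ and $z>K$ there, with $K$ chosen after $M$ and the $L^\infty$ bounds of $w$ and $u_t$. I would verify that Lemma~\ref{lem4} keeps $\int_{A_{\varepsilon,K}}z^{\2}$ close to $S_\G^{Q/2}$ for the chosen $K$.
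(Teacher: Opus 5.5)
Your proof is correct, but it organizes the boundary estimate differently from the paper.

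\textbf{How the paper does it.} The paper handles the lateral piece $\{\|\nabla_\G w\|_{L^2(\Omega)}=r\}$ by splitting in $s$ alone.
\begin{itemize}
\item For $s<s_0=\sqrt{2\kappa}/\delta_0$ it only shows $J<\kappa$.
\item For $s\ge s_0$ it localizes on $A_{\varepsilon,K_0}$ with $K_0=\sup\|(w+u_t)/s\|_{L^\infty(\Omega)}$. It expands the critical term as in \eqref{2.23} and uses the bubble computation \eqref{phi} to reach $J\le\frac12(1-\lambda/\lambda_k)r^2+\frac1Q S_\G^{Q/2}+O(\varepsilon^{(Q-2)/2})$, which is negative for $r$ large.
\end{itemize}
The top piece $s=R$ is treated the same way.

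\textbf{How your route differs.} You split by the ratio $\|\nabla_\G w\|_{L^2(\Omega)}/s$ at a fixed threshold $M$ with $M^2>C_0/(2a)$. This final $M$-split supersedes your tentative $\eta$-discussion, which you can delete.
\begin{itemize}
\item Where $w$ dominates, the negative definite form on $\mathcal S^-$ absorbs $\frac{C_0}{2}s^2$ without using the critical term.
\item Where $s$ dominates and $s\ge1$, take $K\ge 2(C_{\mathcal S^-}M+\|u_t\|_{L^\infty(\Omega)})$. The crude pointwise bound $w+sz+u_t\ge\frac s2 z$ on $A_{\varepsilon,K}$ then gives $J\le\frac{C_0}{2}s^2-c's^{\2}$, which is negative for $s\ge s_1$.
\end{itemize}

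\textbf{What each approach buys.} Your gain is genuine uniformity. $K$ depends only on $M$, $u_t$ and the norm equivalence on $\mathcal S^-$, never on $r$. So $\varepsilon_0$, $r_0=M\max\{1,s_1\}$ and $R_0=\max\{1,s_1\}$ are mutually independent, exactly as the quantifiers in the statement require. You also need only the crude bound $\int_{A_{\varepsilon,K}}z^{\2}\ge c'>0$, not the finer error terms. In the paper, $K_0$ is of order $r/s_0$ on the lateral piece, and the bound $K$ in Case 3 is of order $r$. Hence the admissible $\varepsilon$, the constant hidden in $O(\varepsilon^{(Q-2)/2})$, and $R_0$ all depend on $r$. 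This is harmless for the linking theorem (fix $r$, then $\varepsilon$, then $R$), but it is weaker than what is stated. In exchange, the paper's route ties the estimate to the level $\frac1Q S_\G^{Q/2}$, the same computation reused in Lemma \ref{lem8}.

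\textbf{One correction to your closing remark.} $A_{\varepsilon,K}$ cannot contain a ball $B_d(0,\delta)$ with $\delta$ independent of $\varepsilon$ (the paper asserts this too). Indeed $u_\varepsilon(\xi)\le C\delta^{2-Q}\varepsilon^{(Q-2)/2}\to 0$ on $\{d(\xi)\ge\delta\}$. What does hold, and is all you use, is that $A_{\varepsilon,K}$ contains a ball of radius $c_K\sqrt{\varepsilon}$. The dilation $\delta_{1/\varepsilon}$ maps it to a ball of radius $c_K\varepsilon^{-1/2}\to\infty$. Hence $\int_{A_{\varepsilon,K}}z^{\2}\to S_\G^{Q/2}$ still holds, and your constant $c'$ is uniform for $\varepsilon\le\varepsilon_0(K)$.
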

	
	\begin{proof}
		We split the boundary $\partial Q$ into three parts for clarity:
		\[
		Q_1 \coloneq \{ w \in \mathcal{S}^- : \|\nabla_\G w\|_{L^2(\Omega)} \le r\}, \quad
		Q_2 \coloneq \{ w + s \ \pi_+(u_\varepsilon) : w \in \mathcal{S}^-, \|\nabla_\G w\| = r, 0 \le s \le R \}, 
		\]
		\[
		Q_3 \coloneq \{ w + R \ \pi_+(u_\varepsilon) : w \in \mathcal{S}^-, \|\nabla_\G w\| \le r \}.
		\]
		\vspace{2mm}
		\noindent \textbf{Case 1: $v \in Q_1$.}  \\
		In this case, $v$ lies purely in $\mathcal{S}^-$. By the characterization of eigenvalues $\lambda_k$ and the fact that $\lambda \in (\lambda_k, \lambda_{k+1})$, we have
		\[
		J(v) = \frac{1}{2} \left( 1 - \frac{\lambda}{\lambda_k}  \right) \|\nabla_\G v\|^2_{L^2(\Omega)} - \frac{1}{2^*_Q} \int_\Omega (v+u_t)_+^{2^*_Q} \ d\xi \le 0.
		\]
		Thus, for all $v \in Q_1$, $J(v) < \kappa$, and this case is complete.
		
		\vspace{2mm}
		\noindent \textbf{Case 2: $v \in Q_2$.}  \\
		Let $\delta^2_0 \coloneq \sup_{0<\varepsilon \le 1} \|\nabla_\G \pi_+(u_\varepsilon)\|^2_{L^2(\Omega)}$.  
		If $s < s_0 \coloneq \sqrt{2\kappa}/\delta_0$, then the quadratic part dominates and we immediately get
		\begin{align*}
			J(v) &\le \frac{1}{2} \left( 1 - \frac{\lambda}{\lambda_k}  \right) r^2 + \frac{s^2}{2} \|\nabla_\G \pi_+(u_\varepsilon)\|^2_{L^2(\Omega)} - \frac{1}{\2} \| (w+ s \pi_-(u_\varepsilon)(u_\varepsilon) u_\varepsilon +u_t)_+\|_{L^{\2}(\Omega)}^{\2} \\
			&\le \frac{s^2}{2} \|\nabla_\G \pi_+(u_\varepsilon) \|^2_{L^2(\Omega)} \le \frac{s^2 \delta^2_0 }{2} < \kappa.
		\end{align*}
		For $s \ge s_0$, we estimate the nonlinear term by localizing on a subset of $\Omega$ where $\pi_+(u_\varepsilon)$ is sufficiently large.  
		Let $w \in \mathcal{S}^-$. Since $\mathcal{S}^-$ is finite dimensional and $u_t \in L^\infty(\Omega)$, there exists a constant $K_0$, independent of $s$ and $\varepsilon$ such that
		\[
		K_0 \coloneq \sup \left\| \frac{w + u_t}{s} \right\|_{L^\infty(\Omega)} < \infty.
		\]
		Since $\pi_+(u_\varepsilon) (0) \to \infty$ as $\varepsilon \to 0$, there exists $\bar \varepsilon>0$ such that, for $0<\varepsilon\le \bar \varepsilon$, the set
		\[  A_{\varepsilon, K_0} \coloneq \{ \xi \in \Omega : \pi_+(u_\varepsilon (\xi)) > K_0 \}\]
		has a positive measure for all sufficiently small $\varepsilon$. We estimate the nonlinear term on $A_{\varepsilon, K_0}$. Since $\pi_+ (u_\varepsilon) + \frac{ w+ u_t}{s} >0$ on $A_{\varepsilon, K_0}$, by the Fundamental Theorem of Calculus, we have 
		\begin{align*}
			&\int_{A_{\varepsilon, K_0}} \left( \pi_+ (u_\varepsilon) + \frac{ w+ u_t}{s}\right)^{\2} \ d\xi
			- \int_{A_{\varepsilon, K_0}} |\pi_+(u_\varepsilon)|^{\2} \ d\xi
			- \int_{A_{\varepsilon, K_0}} \left| \frac{w+u_t}{s}\right|^{\2} \ d\xi \\
			&=
			\2 \int_0^1 \int_{A_{\varepsilon, K_0}}
			\left(
			\left| \frac{w+u_t}{s}+ \tau \ \pi_+(u_\varepsilon) \right|^{\2-2} \left(\frac{w+u_t}{s}+ \tau \  \pi_+(u_\varepsilon)\right)
			- |\tau \ \pi_+(u_\varepsilon)|^{\2-2} \tau \ \pi_+(u_\varepsilon)
			\right) \pi_+(u_\varepsilon) \ d\xi \ d\tau .
		\end{align*}
		Using the mean value theorem, there exists $\theta=\theta(\xi)\in(0,1)$ such that
		\begin{align*}
			&\left| \frac{ w+ u_t}{s}+ \tau \ \pi_+(u_\varepsilon)\right|^{\2-2} \left(\frac{ w+ u_t}{s} + \tau \ \pi_+(u_\varepsilon)\right)
			- \left|\tau \ \pi_+(u_\varepsilon)\right|^{\2-2} \tau \ \pi_+(u_\varepsilon)\\
			&= (\2-1)\ \left|\tau \ \pi_+(u_\varepsilon) + \theta \frac{ w+ u_t}{s} \right|^{\2-2} \frac{ w+ u_t}{s} .
		\end{align*}
		Hence,
		\begin{align*}
			&\left|
			\int_{A_{\varepsilon, K_0}} \left( \pi_+(u_\varepsilon) +\frac{ w+ u_t}{s}\right)^{\2} \ d\xi
			- \int_{A_{\varepsilon, K_0}} \left|\pi_+(u_\varepsilon) \right|^{\2} \ d\xi
			- \int_{A_{\varepsilon, K_0}} \left|\frac{ w+ u_t}{s}\right|^{\2} \ d\xi
			\right|\\
			&\le
			C \int_0^1 \int_{A_{\varepsilon, K_0}}
			\left|\tau \ \pi_+(u_\varepsilon) + \theta \frac{ w+ u_t}{s} \right|^{\2-2} \left|\pi_+(u_\varepsilon) \right| \left|\frac{ w+ u_t}{s}\right| \ d\xi \ d\tau \\
			&\le
			C \int_{A_{\varepsilon, K_0}}
			\left( \left|\pi_+(u_\varepsilon) \right|^{\2-1} \left|\frac{ w+ u_t}{s}\right| + \left|\pi_+(u_\varepsilon) \right| \left|\frac{ w+ u_t}{s}\right|^{\2-1} \right)\ d\xi .
		\end{align*}
		Applying this inequality and H\"older's inequality, we obtain
		\begin{align}
			\left\| \left(\pi_{+}(u_{\varepsilon}) + \frac{w + u_{t}}{s} \right)_+ \right\|_{L^{\2}(\Omega)}^{\2} 
			&\geq \left\| \pi_{+}(u_{\varepsilon}) + \frac{w + u_{t}}{s} \right\|_{L^{\2}(A_{\varepsilon, K_0})}^{\2} \nonumber\\
			&\geq \left\| \pi_{+}(u_{\varepsilon}) \right\|_{L^{\2}(A_{\varepsilon,K_0})}^{\2}
			+ \left\| \frac{w + u_{t}}{s} \right\|_{L^{\2}(A_{\varepsilon,K_0})}^{\2} \nonumber \\
			&\quad - C \int_{A_{\varepsilon, K_0}} \left( |\pi_{+}(u_{\varepsilon})|^{\2-1}\left|\frac{w+u_{t}}{s} \right|
			+ |\pi_{+}(u_{\varepsilon})| \left|\frac{w+u_{t}}{s} \right|^{\2-1} \right)\ d\xi \nonumber \\
			&\geq \left\| \pi_{+}(u_{\varepsilon}) \right\|_{L^{\2}(A_{\varepsilon, K_0})}^{\2}
			+ \left\| \frac{w + u_{t}}{s} \right\|_{L^{\2}(A_{\varepsilon, K_0})}^{\2} \nonumber \\
			&\quad - C\Big( \| \pi_{+}(u_{\varepsilon}) \|_{L^{\2-1}(A_{\varepsilon, K_0})}^{\2-1}
			+ \| \pi_{+}(u_{\varepsilon}) \|_{L^{1}(A_{\varepsilon, K_0})}\Big). \label{2.23}
		\end{align}
		Then, by applying Lemmas \ref{lem2},\ref{lem3}, \ref{lem4} and using \eqref{2.23}, we deduce that, for $\varepsilon > 0$ sufficiently small,  
		\begin{align*}
			J(v) 
			&\leq \frac{1}{2}\Big(1 - \frac{\lambda}{\lambda_k}\Big) r^2 + \frac{s^2}{2} 
			\left( \| \nabla_\G \pi_{+}(u_{\varepsilon}) \|^2_{L^2(\Omega)} -\lambda \|\pi_+ (u_\varepsilon)\|_{L^2(\Omega)}^2 \right)\\
			& \quad - \frac{s^{\2}}{\2} \left[ 
			\| \pi_{+}(u_{\varepsilon}) \|^{\2}_{L^{\2}(A_{\varepsilon, K_0})} + \left\|\frac{w+u_t}{s}\right\|_{L^{\2}(A_{\varepsilon,K_0})}^{\2} - C \Big(
			\| \pi_{+}(u_{\varepsilon}) \|_{L^{\2-1}(A_{\varepsilon, K_0})}^{\2-1}
			+ \| \pi_{+}(u_{\varepsilon}) \|_{L^1(A_{\varepsilon, K_0})}\Big) \right]\\
			& \leq \frac{1}{2}\Big(1 - \frac{\lambda}{\lambda_k}\Big) r^2 + \frac{s^2}{2} \left(S_{\G}^{\frac{Q}{2}} + O(\varepsilon^{Q-2}) \right) - \frac{s^{\2}}{\2} \left( \|u_\varepsilon\|_{L^{\2}(\Omega)}^{\2} + O(\varepsilon^{Q-2})\right)\\
			& \quad + C s^{\2}\left( \|u_\varepsilon\|_{L^{\2-1}(\Omega)}^{\2-1} + O(\varepsilon^{\frac{Q+2}{2}}) + \|u_\varepsilon\|_{L^1(\Omega)} + O(\varepsilon^{\frac{Q-2}{2}})\right)\\
			& \leq \frac{1}{2}\Big(1 - \frac{\lambda}{\lambda_k}\Big) r^2 + \frac{s^2}{2} \left(S_\G^{\frac{Q}{2}} + O(\varepsilon^{Q-2}) \right) - \frac{s^{\2}}{\2} \left( S_\G^{\frac{Q}{2}} + O(\varepsilon^{Q}) + O(\varepsilon^{Q-2})\right) + C s^{\2}\left( \varepsilon^{\frac{Q-2}{2}} +  O(\varepsilon^{\frac{Q+2}{2}})\right)\\
			& \leq \frac{1}{2}\Big(1 - \frac{\lambda}{\lambda_k}\Big) r^2 + \frac{s^2}{2} S_\G^{\frac{Q}{2}} - \frac{s^{\2}}{\2} S_\G^{\frac{Q}{2}} + C s^{\2} \varepsilon^{\frac{Q-2}{2}}\\
			& \coloneq \frac{1}{2}\Big(1 - \frac{\lambda}{\lambda_k}\Big) r^2 + \Phi_\varepsilon(s).
		\end{align*}
		Using the estimates from \eqref{phi}, this becomes
		\[
		J(v) \leq 
		\frac{1}{2}\Big(1 - \frac{ \lambda}{\lambda_k}\Big)r^2 + \frac{1}{Q} S_{\G}^{\frac{Q}{2}}
		+ O(\varepsilon^{\frac{Q-2}{2}}).
		\]
		Since $\frac{1}{2}(1 - \frac{\lambda}{\lambda_k})r^2 \to -\infty$ as $r \to \infty$, there exists $r > 0$ such that $J(v) < 0$ for $v \in Q_2$. 
		
	   \noindent \textbf{Case 3: $v \in Q_3$.}  \\
		Here $v = w + R \pi_+ u_\varepsilon$ with $w \in \mathcal{S}^- \cap \bar{B}_d(0,r)$.  
		Since $\mathcal{S}^- \cap \bar{B}_d(0,r)$ is finite-dimensional and $u_t \in L^\infty$, there exists $K>0$ such that $\|w + u_t\|_{L^\infty} \le K$.  
		Choose $0<\varepsilon_0< \bar \varepsilon$ small enough so that $\pi_+ (u_\varepsilon(0)) > 2K$ for $0< \varepsilon < \varepsilon_0$. Then, for sufficiently large $R > R_0$, the set
		\[
		\Big\{ \xi \in \Omega : \pi_+ (u_\varepsilon(\xi)) + \frac{w(\xi)+u_t(\xi)}{R} > 1 \Big\}
		\]
		has a positive measure. Again, similar to Case 2, the dominant term is $\pi_+ (u_\varepsilon)$, ensuring
		\[
		J(v) < 0 \le \kappa, \quad \forall v \in Q_3.
		\]
		Combining the three cases, we conclude that there exist $\varepsilon_0$ and $R_0 >0$ such that if $\varepsilon < \varepsilon_0$ and $R> R_0$, we have $J(v) < \kappa$, for all $v \in \partial Q$. This establishes the desired linking geometry.
	\end{proof}
	We consider the function
	\[
	\Phi_\varepsilon(s)
	\coloneq \frac{s^2}{2} S_\G^{\frac{Q}{2}}
	- \frac{s^{2^*_Q}}{2^*_Q} S_\G^{\frac{Q}{2}}
	+ C s^{2^*_Q} \varepsilon^{\frac{Q-2}{2}},
	\qquad s>0.
	\]
	Since $2^*_Q>2$, we have $\Phi_\varepsilon(s)\to -\infty$ as $s\to+\infty$ and
	$\Phi_\varepsilon(0)=0$, hence $\Phi_\varepsilon$ attains a maximum at some
	$s_\varepsilon>0$. A direct computation yields
	\[
	\Phi_\varepsilon'(s)
	= s S_\G^{\frac{Q}{2}}
	- s^{2^*_Q-1} S_\G^{\frac{Q}{2}}
	+ 2^*_Q C s^{2^*_Q-1}\varepsilon^{\frac{Q-2}{2}}.
	\]
	The critical point $s_\varepsilon$ satisfies
	\[
	s_\varepsilon^{2^*_Q-2}
	=
	\frac{S_\G^{\frac{Q}{2}}}
	{S_\G^{\frac{Q}{2}} - 2^*_Q C \varepsilon^{\frac{Q-2}{2}}}
	= 1 + O\!\left(\varepsilon^{\frac{Q-2}{2}}\right),
	\]
	which implies
	\[
	s_\varepsilon
	= 1 + O\!\left(\varepsilon^{\frac{Q-2}{2}}\right).
	\]
	Evaluating $\Phi_\varepsilon$ at $s_\varepsilon$, we obtain
	\begin{align*}
		\Phi_\varepsilon(s_\varepsilon)
		= \frac{s_\varepsilon^2}{2} S_\G^{\frac{Q}{2}}
		- \frac{s_\varepsilon^{2^*_Q}}{2^*_Q} S_\G^{\frac{Q}{2}}
		+ C s_\varepsilon^{2^*_Q} \varepsilon^{\frac{Q-2}{2}} = \left(\frac{1}{2}-\frac{1}{2^*_Q}\right) S_\G^{\frac{Q}{2}}
		+ O\!\left(\varepsilon^{\frac{Q-2}{2}}\right) = \frac{1}{Q} S_\G^{\frac{Q}{2}}
		+ O\!\left(\varepsilon^{\frac{Q-2}{2}}\right),
	\end{align*}
	where we used $\frac{1}{2}-\frac{1}{2^*_Q}=\frac{1}{Q}$.
	Therefore,
	\begin{equation}
		\label{phi}
		\max_{s>0} \Phi_\varepsilon(s)
		= \frac{1}{Q} S_\G^{\frac{Q}{2}}
		+ O\!\left(\varepsilon^{\frac{Q-2}{2}}\right).
	\end{equation}
	
	We now prove that the energy of the functional $J$ on the entire linking
	set ${Q}$ is strictly below the critical Sobolev level.
	This estimate will play a crucial role in the compactness analysis
	at the minimax level.
	
	\begin{lem}
		\label{lem8}
		Assume that $Q > 6$. Then
		\[
		\max_{v \in {Q}} J(v)
		< \frac{1}{Q} S_\G^{\frac{Q}{2}},
		\]
		where $S_\G$ denotes the best Sobolev constant.
	\end{lem}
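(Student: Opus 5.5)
We will estimate $J(w+s\,\pi_+(u_\varepsilon))$ for arbitrary $w\in\mathcal{S}^-$ with $\|\nabla_\G w\|\le r$ and $s\in[0,R]$, and show that its supremum lies strictly below $\frac1Q S_\G^{Q/2}$ once $\varepsilon$ is small. The negative term $-\lambda\|\cdot\|_{L^2}^2$ must produce a gain of order $\varepsilon^2$ that beats all error terms. First we separate the quadratic parts. Since $\mathcal{S}^-$ and $\mathcal{S}^+$ are orthogonal in both $\h$ and $L^2(\Omega)$ (the $e_j$ are eigenfunctions), we have
\[
\frac12\|\nabla_\G v\|^2-\frac\lambda2\|v\|_{L^2}^2=\frac12\Big(\|\nabla_\G w\|^2-\lambda\|w\|_{L^2}^2\Big)+\frac{s^2}{2}\Big(\|\nabla_\G\pi_+u_\varepsilon\|^2-\lambda\|\pi_+u_\varepsilon\|_{L^2}^2\Big).
\]
The $w$-part is bounded above by $-\frac12(\frac{\lambda}{\lambda_k}-1)\|\nabla_\G w\|^2\le -c\|w\|_{L^\infty}^2$, using equivalence of norms on $\mathcal{S}^-$. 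For the $s$-part we use Lemma \ref{lem3} and Lemma \ref{lem2}, together with $\|\pi_+u_\varepsilon\|_{L^2}^2=\|u_\varepsilon\|_{L^2}^2-\|\pi_-u_\varepsilon\|_{L^2}^2\ge C_1\varepsilon^2-C\varepsilon^{Q-2}$. This gives the bound $\frac{s^2}{2}\big(S_\G^{Q/2}-\lambda C_1\varepsilon^2+O(\varepsilon^{Q-2})\big)$.

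Next comes the critical term. As in Case 2 of Lemma \ref{lem7}, we only need to consider $s$ in a compact interval $[s_0,s_1]$ bounded away from $0$ and $\infty$. For small $s$ the energy is small, and for large $s$ it is negative; both facts hold uniformly in $w$, since large $\|w\|$ only decreases $J$. On $A_{\varepsilon,K}$ we write $(s\pi_+u_\varepsilon+w+u_t)_+^{\2}\ge s^{\2}(\pi_+u_\varepsilon)^{\2}-C s^{\2-1}(\pi_+u_\varepsilon)^{\2-1}|w+u_t|-C\ldots$. Rather than using the crude estimate \eqref{2.23}, whose error $\varepsilon^{(Q-2)/2}$ is too large, we keep the linear interaction term exactly. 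Its size is $\|w+u_t\|_{L^\infty}\,\|\pi_+u_\varepsilon\|_{L^{\2-1}}^{\2-1}\le C(\|w\|_{L^\infty}+\|u_t\|_{L^\infty})\varepsilon^{(Q-2)/2}$, by Lemma \ref{lem4}. The remaining lower-order terms are bounded by $C\|\pi_+u_\varepsilon\|_{L^{\2-2}}^{\2-2}$, which is $O(\varepsilon^2)\cdot o(1)$ for $Q>6$, or smaller. Combined with Lemma \ref{lem2}, this gives
\[
J(v)\le \frac{s^2}{2}S_\G^{Q/2}-\frac{s^{\2}}{\2}S_\G^{Q/2}-c\,\varepsilon^2-c\|w\|_{L^\infty}^2+C(1+\|w\|_{L^\infty})\varepsilon^{\frac{Q-2}{2}}+O(\varepsilon^{Q-2}).
\]

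Finally, we maximize. We use $\max_{s}(\frac{s^2}{2}-\frac{s^{\2}}{\2})S_\G^{Q/2}=\frac1QS_\G^{Q/2}$, and for the $w$-dependence $-c\|w\|^2+C\varepsilon^{(Q-2)/2}\|w\|\le C'\varepsilon^{Q-2}$. Since $Q>6$ gives $\frac{Q-2}{2}>2$, every error term is $o(\varepsilon^2)$, and therefore $\max_{Q}J\le\frac1QS_\G^{Q/2}-c\varepsilon^2+o(\varepsilon^2)<\frac1QS_\G^{Q/2}$ for small $\varepsilon$.

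\textbf{Main obstacle.} The key step is controlling the interaction of $u_t$ with the bubble. The term $\int(\pi_+u_\varepsilon)^{\2-1}|u_t|$ is only $O(\varepsilon^{(Q-2)/2})$ and has an unfavorable sign, because $u_t\le0$ decreases the positive part. Beating the $\varepsilon^2$ gain therefore requires exactly $Q>6$. A further subtlety is that the lower bound for $(a+b)_+^{\2}$ must use only first-order expansion plus genuinely smaller remainders, not the cruder bound \eqref{2.23}.
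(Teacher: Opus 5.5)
Your architecture matches the paper's. The $w$-part is nonpositive because $\lambda>\lambda_k$. The term $-\frac{s^2}{2}\lambda\|\pi_+(u_\varepsilon)\|_{L^2(\Omega)}^2$ gives a gain of order $\varepsilon^2$ for $s\ge s_0$. The interaction of the bubble with $w+u_t$ costs $O(\varepsilon^{\frac{Q-2}{2}})$, and $Q>6$ makes that $o(\varepsilon^2)$. Your stated reason for abandoning \eqref{2.23} is mistaken, though. Its error term $C\bigl(\|\pi_+(u_\varepsilon)\|_{L^{2^*_Q-1}}^{2^*_Q-1}+\|\pi_+(u_\varepsilon)\|_{L^1}\bigr)$ is $O(\varepsilon^{\frac{Q-2}{2}})$ by Lemma~\ref{lem3}, exactly the size of the linear term you keep. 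The paper concludes precisely because this is $o(\varepsilon^2)$ when $Q>6$, so keeping the linear term exactly gains nothing in the exponent.

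The step that fails as written is your claim that the second-order remainder, of size $C\|w+u_t\|_{L^\infty}^2\|\pi_+(u_\varepsilon)\|_{L^{2^*_Q-2}}^{2^*_Q-2}$, is $o(\varepsilon^2)$.
\begin{itemize}
\item Since $2^*_Q-2=\frac{4}{Q-2}$ and $u_\varepsilon(\xi)\approx c\,\varepsilon^{\frac{Q-2}{2}}d(\xi)^{2-Q}$ for $d(\xi)\gg\varepsilon$, one has $u_\varepsilon^{2^*_Q-2}\approx c\,\varepsilon^{2}d(\xi)^{-4}$. This is integrable at $0$ for $Q>4$, so the quantity is $\asymp\varepsilon^2$ for every $Q>4$, not $o(\varepsilon^2)$.
\item This holds over $\Omega$, and over any set containing a fixed ball $B_d(0,\delta)$, as Lemma~\ref{lem4} presumes. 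Subtracting $\pi_-(u_\varepsilon)=O(\varepsilon^{\frac{Q-2}{2}})$ does not change it, since the bubble tail dominates on a small fixed annulus around $0$.
\item That is the order of your gain, with a prefactor $\|w+u_t\|_{L^\infty}^2$ that is large when $|t|$ is large. If this remainder really entered with a minus sign, it could swamp $\frac{\lambda}{2}C_1 s_0^2\varepsilon^2$.
\end{itemize}

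The missing observation is that no remainder arises. The map $x\mapsto x_+^{2^*_Q}$ is convex with derivative $2^*_Q x_+^{2^*_Q-1}$, so pointwise on all of $\Omega$
\[
\bigl(s\pi_+(u_\varepsilon)+w+u_t\bigr)_+^{2^*_Q}\ge \bigl(s\pi_+(u_\varepsilon)\bigr)_+^{2^*_Q}+2^*_Q\bigl(s\pi_+(u_\varepsilon)\bigr)_+^{2^*_Q-1}(w+u_t).
\]
In the same way,
\[
\int_\Omega(\pi_+(u_\varepsilon))_+^{2^*_Q}\,d\xi\ge\int_\Omega u_\varepsilon^{2^*_Q}\,d\xi-2^*_Q\|\pi_-(u_\varepsilon)\|_{L^\infty}\|u_\varepsilon\|_{L^{2^*_Q-1}}^{2^*_Q-1}\ge S_{\mathbb{G}}^{Q/2}-C\varepsilon^{Q-2}.
\]
With these two inequalities your estimate goes through on all of $\Omega$, including the absorption $-c\|w\|_{L^\infty}^2+C\varepsilon^{\frac{Q-2}{2}}\|w\|_{L^\infty}\le C'\varepsilon^{Q-2}$. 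The repaired argument is slightly cleaner than the paper's in two ways. It is uniform in $r$, whereas the paper uses a fixed bound $K_0$ on $\|w+u_t\|_{L^\infty}$ over $\|\nabla_{\mathbb{G}} w\|\le r$. It also needs neither $A_{\varepsilon,K}$ nor Lemma~\ref{lem4}. That is just as well: $u_\varepsilon(\xi)\le M\varepsilon^{\frac{Q-2}{2}}d(\xi)^{2-Q}\to0$ at each fixed $\xi\neq0$, so $A_{\varepsilon,K}$ shrinks as $\varepsilon\to0$ and cannot contain a ball of $\varepsilon$-independent radius.
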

	
	\begin{proof}
		Let $\varepsilon>0$ be fixed sufficiently small so that all estimates established in the previous lemma hold.
		Take $v=w+s\,\pi_+(u_\varepsilon)\in Q$, where $w\in\mathcal S^-$ and $s\ge0$. From the definition of $J$, we write
		\begin{align*}
			J(v)
			&=
			\frac12\int_\Omega\big(|\nabla_\G w|^2-\lambda w^2\big)\,d\xi
			+\frac{s^2}{2}\int_\Omega\big(|\nabla_\G \pi_+(u_\varepsilon)|^2-\lambda(\pi_+(u_\varepsilon))^2\big)\,d\xi \nonumber\\
			&\quad
			-\frac{1}{2^*_Q}\int_\Omega\big(w+s\,\pi_+(u_\varepsilon)+u_t\big)_+^{2^*_Q}\,d\xi .
		\end{align*}
		\noindent\textbf{Case 1:} $s< s_0$, where $s_0$ is defined in Lemma \ref{lem7}. Using the boundedness of $\|\nabla_\G \pi_+(u_\varepsilon)\|_{L^2(\Omega)}$, we obtain
		\[
		J(v)
		\le
		\frac{s^2}{2}\|\nabla_\G \pi_+(u_\varepsilon)\|_{L^2(\Omega)}^2
		\le
		\frac{s^2}{2}\delta_0^2
		< \kappa <
		\frac{1}{Q}S_\G^{\frac{Q}{2}} .
		\]
		\noindent\textbf{Case 2:} Assume that $s \ge s_0$.
		If $s \ge s_0$, using \eqref{2.23} and arguing as in Lemmas~\ref{lem2} and~\ref{lem4},
		we obtain
		\begin{equation}\label{case2-ineq}
			J(w + s\,\pi_{+}(u_{\varepsilon})) \leq 
			\frac{s^{2}}{2} \int_{\Omega}
			\big( |\nabla_\G \pi_{+}(u_{\varepsilon})|^{2}
			- \lambda (\pi_{+}(u_{\varepsilon}))^{2} \big)\, d\xi
			- \frac{s^{2^*_Q}}{2^*_Q}
			\int_{A_{\varepsilon, K_0}} (\pi_{+}(u_{\varepsilon}))^{2^*_Q}\, d\xi 
			+ C s^{2^*_Q} \varepsilon^{\frac{Q-2}{2}} .
		\end{equation}
		Proceeding as in \eqref{phi}, we deduce from \eqref{case2-ineq} that
		\[
		J(w + s\,\pi_{+}(u_{\varepsilon}))
		\leq \frac{1}{Q}
		\left( \int_{\Omega}
		\big( |\nabla_\G \pi_{+}(u_{\varepsilon})|^{2}
		- \lambda (\pi_{+}(u_{\varepsilon}))^{2} \big)\, d\xi \right)^{\frac{Q}{2}}
		\left( \int_{A_{\varepsilon,K_0}}
		(\pi_{+}(u_{\varepsilon}))^{2^*_Q}\, d\xi \right)^{-\frac{Q-2}{2}}
		+ O\!\left(\varepsilon^{\frac{Q-2}{2}}\right).
		\]
		Using the estimates from Lemmas \ref{lem2}, \ref{lem3} and \ref{lem4} on $\pi_{+}(u_{\varepsilon})$, we deduce 
		\begin{align}
			J(w + s \pi_{+}(u_{\varepsilon})) 
			& \leq \frac{1}{Q}\frac{ \left( S_\G^{\frac{Q}{2}}+ O(\varepsilon^{Q-2}) -\lambda C \varepsilon^2 + O(\varepsilon^{Q-2}) \right)^{\frac{Q}{2}} }{ \left( S_\G^{\frac{Q}{2}}+ O(\varepsilon^{Q}) + O(\varepsilon^{Q-2})\right)^{\frac{Q-2}{2}}} + O\left(\varepsilon^{\frac{Q-2}{2}}\right)\nonumber\\
			&\leq \frac{1}{Q} S_\G^{\frac{Q}{2}} 
			- \frac{1}{2} \lambda O(\varepsilon^2) +  O\left(\varepsilon^{\frac{Q-2}{2}}\right), \nonumber
		\end{align}
		where we have used $\|\pi_+(u_\varepsilon) \|^2_{L^2(\Omega)} = \|u_\varepsilon\|_{L^2(\Omega)} - \|\pi_-(u_\varepsilon)\|^2_{L^2(\Omega)}$ $\geq C\varepsilon^2 +O(\varepsilon^{Q-2})$, for $Q>6$. Finally, if $Q > 6$, i.e. $2 < \tfrac{Q-2}{2}$, the result follows by choosing $\varepsilon > 0$ sufficiently small.
	\end{proof}

	As a consequence of Lemmas \ref{lem1}, \ref{lem7}, and \ref{lem8}, the functional $J$ satisfies the geometric assumptions of the
	linking theorem without the Palais-Smale condition
	\cite[Theorem 5.1]{Mawhin}.
	Therefore, there exists a sequence $\{v_n\} \subset \h$ such that
	\begin{align}
		J(v_n) &= \frac{1}{2} \int_{\Omega} \big( |\nabla_\G v_n|^2 - \lambda v_n^2 \big)\, d\xi 
		- \frac{1}{\2} \int_{\Omega} (v_n + u_t)^{\2}_+ \, d\xi
		= c + o(1), \label{2.25} \\
		\langle J'(v_n), \phi \rangle_{\h} &= \int_{\Omega} (\nabla_\G v_n \cdot \nabla_\G \phi - \lambda v_n \phi)\ d\xi 
		- \int_{\Omega} (v_n + u_t)^{\2-1}_+ \phi \, d\xi 
		= \varepsilon_n\|\nabla_\G\phi\|_{L^2(\Omega)},  \label{2.26}
	\end{align}
	for all $\phi \in \h$, where the minimax level $c$ is given by
	\[
	c \coloneq \inf_{\gamma \in \Gamma} \max_{v \in Q} J(\gamma(v)),
	\]
	with $\Gamma = \{ \gamma \in C({Q}, \h): \gamma(v)=v, \text{ if } v \in \partial Q$\}.
	Moreover, by Lemma \ref{lem8}, the level $c$ satisfies
	$$c < \frac{1}{Q} S_\G^{\frac{Q}{2}}.$$
	
	\begin{lem}
		\label{lem9}
		The sequence $\{v_n\}$ is bounded in $\h$.
	\end{lem}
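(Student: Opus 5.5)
The plan is the standard Brézis--Nirenberg type boundedness argument, using the identity obtained by combining \eqref{2.25} and \eqref{2.26}, with the indefinite quadratic part handled through the spectral splitting $\h=\mathcal S^-\oplus\mathcal S^+$.

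\textbf{Step 1: control of the critical term.} Test \eqref{2.26} with $\phi=v_n$ and subtract from $2J(v_n)$. The quadratic terms cancel, and we get
\[
\int_\Omega (v_n+u_t)_+^{\2-1}v_n\,d\xi-\frac{2}{\2}\int_\Omega (v_n+u_t)_+^{\2}\,d\xi
=2c+o(1)+\varepsilon_n\|v_n\|_{\h}.
\]
Write $v_n=(v_n+u_t)-u_t$. Then the left side equals
\[
\frac{2}{Q}\int_\Omega (v_n+u_t)_+^{\2}\,d\xi-\int_\Omega (v_n+u_t)_+^{\2-1}u_t\,d\xi .
\]
Since $u_t\le 0$, the last integral is nonpositive, so $-\int (v_n+u_t)_+^{\2-1}u_t\ge 0$. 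Hence
\[
\int_\Omega (v_n+u_t)_+^{\2}\,d\xi\le C+C\varepsilon_n\|v_n\|_{\h}.
\]
We also keep the bound
\[
\int_\Omega (v_n+u_t)_+^{\2-1}|u_t|\,d\xi\le C+C\varepsilon_n\|v_n\|_{\h}.
\]
By Hölder's inequality and boundedness of $\Omega$ we deduce
\[
\|(v_n+u_t)_+^{\2-1}\|_{L^{(\2)'}}\le \bigl(C+C\varepsilon_n\|v_n\|_{\h}\bigr)^{(\2-1)/\2}.
\]

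\textbf{Step 2: the linear part, split along the spectrum.} Write $v_n=v_n^-+v_n^+$ with $v_n^\pm=\pi_\pm v_n$, and test \eqref{2.26} with $\phi=v_n^+-v_n^-$. Orthogonality of the eigenspaces in both $\h$ and $L^2$ gives, by the spectral inequalities,
\[
\int_\Omega(|\nabla_\G v_n^+|^2-\lambda (v_n^+)^2)\ge\Bigl(1-\tfrac{\lambda}{\lambda_{k+1}}\Bigr)\|v_n^+\|^2,
\qquad
-\int_\Omega(|\nabla_\G v_n^-|^2-\lambda (v_n^-)^2)\ge\Bigl(\tfrac{\lambda}{\lambda_k}-1\Bigr)\|v_n^-\|^2 .
\]
Hence
\[
c_\lambda\|v_n\|_{\h}^2\le \int_\Omega (v_n+u_t)_+^{\2-1}(v_n^+-v_n^-)\,d\xi+\varepsilon_n\|v_n\|_{\h}.
\]
Using the dual-norm bound from Step 1 and Sobolev's inequality $\|v_n^\pm\|_{L^{\2}}\le C\|v_n\|_{\h}$, this yields
\[
\|v_n\|_{\h}^2\le C\bigl(1+\|v_n\|_{\h}\bigr)^{\frac{\2-1}{\2}}\|v_n\|_{\h}+C\|v_n\|_{\h}.
\]
The right side grows like $\|v_n\|^{1+(\2-1)/\2}$, and $1+(\2-1)/\2<2$. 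So this forces $\sup_n\|v_n\|_{\h}<\infty$. For $0<\lambda<\lambda_1$ there is no $\mathcal S^-$, and the same argument applies with $\phi=v_n$.

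\textbf{Main obstacle.} The delicate point is Step 1, namely extracting control of the critical term that is at most linear in $\|v_n\|$ despite the shift by $u_t$. This is where the sign $u_t\le 0$ is essential: it makes the cross term $-\int (v_n+u_t)_+^{\2-1}u_t$ nonnegative, so it can simply be dropped. If some exponent bookkeeping in Step 2 is too weak, I would argue by contradiction instead. Suppose $\|v_n\|\to\infty$ and set $w_n=v_n/\|v_n\|$. Step 1 gives $\|(v_n+u_t)_+\|_{L^{\2}}^{\2}=o(\|v_n\|^2)$. Dividing \eqref{2.26} by $\|v_n\|$ then shows that $w_n\rightharpoonup w$ with $-\Delta_\G w=\lambda w$ and $w\ne0$. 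The strong convergence of $w_n^-$, which holds because $\mathcal S^-$ is finite dimensional, together with the coercive estimate gives $w\neq0$. This contradicts the fact that $\lambda$ is not an eigenvalue.
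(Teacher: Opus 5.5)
Your proposal is correct and follows essentially the same argument as the paper. Both combine $J(v_n)-\frac12\langle J'(v_n),v_n\rangle$ with $u_t\le 0$ to get $\int_\Omega (v_n+u_t)_+^{2^*_Q}\,d\xi\le C+C\varepsilon_n\|v_n\|$, then use the spectral splitting $\mathcal{S}^-\oplus\mathcal{S}^+$ and H\"older/Sobolev to show that the critical term grows strictly slower than $\|v_n\|^2$. The only difference is cosmetic: you test once with $v_n^+-v_n^-$ and compare exponents directly, while the paper tests separately with the $\mathcal{S}^+$ and $\mathcal{S}^-$ components, absorbs via Young's inequality and adds the two estimates, so your version makes explicit the paper's ``analogous argument'' for $\mathcal{S}^-$.
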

	\begin{proof}
		First we compute
		\begin{align}
			J(v_n)- \frac{1}{2} \langle J'(v_n), v_n \rangle_{\h} 
			&= \frac{1}{2} \int_{\Omega} (v_n + u_t)^{2^*-1}_{+} v_n \, d\xi
			- \frac{1}{\2} \int_{\Omega} (v_n + u_t)^{\2}_{+} \, d\xi \nonumber\\
			&= \frac{1}{2} \int_{\Omega} (v_n + u_t)^{\2-1}_{+} (v_n + u_t)\, d\xi
			- \frac{1}{2} \int_{\Omega} (v_n + u_t)^{\2-1}_{+} u_t \, d\xi \nonumber\\
			&\quad - \frac{1}{\2} \int_{\Omega} (v_n + u_t)^{\2}_{+} \, d\xi. \nonumber
		\end{align}
		Since $u_t \le 0$ and $(v_n+u_t)_+ \ge 0$, the second integral on the right-hand
		side is nonnegative. Hence,
		\begin{align}
			J(v_n)- \frac{1}{2} \langle J'(v_n), v_n \rangle_{\h}
			&\ge \frac{1}{2} \int_{\Omega} (v_n + u_t)^{\2}_{+} \, d\xi
			- \frac{1}{\2} \int_{\Omega} (v_n + u_t)^{\2}_{+} \, d\xi \nonumber\\
			&= \frac{1}{Q} \int_{\Omega} (v_n + u_t)^{\2}_{+} \, d\xi.
			\label{2.27}
		\end{align}
		Combining \eqref{2.25}, \eqref{2.26}, and \eqref{2.27}, we obtain
		\begin{equation}
			\frac{1}{Q} \int_{\Omega} (v_n + u_t)^{\2}_{+} \ d\xi
			\le J(v_n)- \frac{1}{2} \langle J'(v_n), v_n \rangle_{\h}
			\le c + \frac{\varepsilon_n}{2}\|\nabla_\G v_n\|_{L^2(\Omega)} + o(1),
			\label{2.28}
		\end{equation}
		where $\varepsilon_n \to 0$ as $n \to \infty$.
		Next, we decompose
		$v_n = v_n^{(1)} + v_n^{(2)},
		\quad v_n^{(1)} \in \mathcal{S}^{+},$ where $v_n^{(2)} \in \mathcal{S}^{-}.$
		Taking $v_n^{(1)}$ as a test function in \eqref{2.26} and using the variational
		characterization of $\lambda_{k+1}$, we have
		\begin{align}
			\langle J'(v_n), v_n^{(1)} \rangle_{\h}
			&= \|\nabla_\G v_n^{(1)}\|_{L^2(\Omega)}^{2}
			- \lambda \|v_n^{(1)}\|_{L^2(\Omega)}^{2}
			- \int_{\Omega} (v_n + u_t)^{\2-1}_{+} v_n^{(1)} \, d\xi \nonumber\\
			&\ge \left(1 - \frac{\lambda}{\lambda_{k+1}}\right)
			\|\nabla_\G v_n^{(1)}\|_{L^2(\Omega)}^{2}
			- \int_{\Omega} (v_n + u_t)^{\2-1}_{+} v_n^{(1)} \, d\xi.
			\label{2.29}
		\end{align}
		
		Using \eqref{2.26}, \eqref{2.28}, and \eqref{2.29}, together with Hölder’s and Young’s inequalities, we obtain
		\begin{align*}
			\left(1 - \frac{\lambda}{\lambda_{k+1}}\right)
			\|\nabla_\G v_n^{(1)}\|_{L^2(\Omega)}^{2}
			&\le \int_{\Omega} (v_n + u_t)^{\2-1}_{+} v_n^{(1)} \, d\xi
			+ \varepsilon_n \|\nabla_\G v_n^{(1)}\|_{L^2(\Omega)} \nonumber\\
			&\le \|v_n^{(1)}\|_{L^{\2}(\Omega)}
			\left(\int_{\Omega} (v_n + u_t)^{\2}_{+} \, d\xi\right)^{\frac{\2-1}{\2}}
			+ \varepsilon_n \|\nabla_\G v_n^{(1)}\|_{L^2(\Omega)} \nonumber\\
			&\le \varepsilon \|v_n^{(1)}\|_{L^{\2}(\Omega)}^{2}
			+ C_\varepsilon \left(\int_{\Omega} (v_n + u_t)^{\2}_{+} \, d\xi\right)^{\frac{2(\2-1)}{\2}}
			+ \varepsilon_n \|\nabla_\G v_n^{(1)}\|_{L^2(\Omega)} \nonumber\\
			&\le \varepsilon C \|\nabla_\G v_n^{(1)}\|_{L^2(\Omega)}^{2}
			+ C_\varepsilon \left( c \ Q+ \frac{\varepsilon_n \ Q}{2} \|\nabla_\G v_n \|_{L^2(\Omega)} + o(1) \right)^{\frac{2(\2-1)}{\2}} \nonumber\\
			& \quad +\varepsilon_n \|\nabla_\G v_n^{(1)}\|_{L^2(\Omega)} \nonumber\\
			& \le \varepsilon C \|\nabla_\G v_n^{(1)}\|_{L^2(\Omega)}^{2} + C_\varepsilon' + C\varepsilon_n \left(
			\|\nabla_\G v_n\|_{L^2(\Omega)}^{\frac{Q+2}{Q}}
			+ \|\nabla_\G v_n^{(1)}\|_{L^2(\Omega)} \right).
		\end{align*}
		Choosing $\varepsilon>0$ such that
		\[
		0<\varepsilon C < 1 - \frac{\lambda}{\lambda_{k+1}},
		\]
		we deduce
		\begin{equation}
			\label{2.30}
			\|\nabla_\G v_n^{(1)}\|_{L^2(\Omega)}^{2}
			\le C + \varepsilon_n C \left(
			\|\nabla_\G v_n\|_{L^2(\Omega)}^{(Q+2)/Q}
			+ \|\nabla_\G v_n^{(1)}\|_{L^2(\Omega)} \right).
		\end{equation}
		By an analogous argument applied to $v_n^{(2)} \in \mathcal{S}^{-}$, we obtain
		\begin{equation}
			\label{2.31}
			\|\nabla_\G v_n^{(2)}\|_{L^2(\Omega)}^{2}
			\le \bar C + \varepsilon_n C \left(
			\|\nabla_\G v_n\|_{L^2(\Omega)}^{(Q+2)/Q}
			+ \|\nabla_\G v_n^{(2)}\|_{L^2(\Omega)} \right).
		\end{equation}
		Adding \eqref{2.30} and \eqref{2.31}, we conclude that the sequence
		$\{v_n\}$ is bounded in $\h$.
	\end{proof}
	We now establish the compactness of Palais-Smale sequences below the critical level.
	\begin{lem}
		\label{lem10}
		Let $\{v_n\} \subset \h$ be a Palais-Smale sequence for $J$ at level
		$c < \frac{1}{Q} S_\G^{\frac{Q}{2}}$. Then, up to a subsequence,
		$v_n \to v$ strongly in $\h$.
	\end{lem}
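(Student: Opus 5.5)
By the argument of Lemma \ref{lem9}, which uses only the Palais-Smale property and not the specific minimax construction, the sequence $\{v_n\}$ is bounded in $\h$. Passing to a subsequence, we may therefore assume
\[
v_n \rightharpoonup v \text{ in } \h, \qquad v_n \to v \text{ in } L^2(\Omega), \qquad v_n \to v \text{ a.e. in } \Omega .
\]
The compact embedding into $L^2(\Omega)$ holds because $\Omega$ is bounded.

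Since $(v_n+u_t)_+^{\2-1}$ is bounded in $L^{\frac{\2}{\2-1}}(\Omega)$ and converges a.e., it converges weakly to $(v+u_t)_+^{\2-1}$. Passing to the limit in \eqref{2.26} then shows that $v$ is a critical point of $J$. Testing $J'(v)=0$ with $v$ gives
\[
J(v) = \frac12\int_\Omega (v+u_t)_+^{\2-1}v\,d\xi - \frac1{\2}\int_\Omega (v+u_t)_+^{\2}\,d\xi .
\]
Because $u_t\le 0$, the same computation as in \eqref{2.27} yields $J(v)\ge \frac1Q\int_\Omega (v+u_t)_+^{\2}\,d\xi\ge 0$.

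Next set $w_n = v_n - v$. By weak convergence, $\|\nabla_\G v_n\|^2 = \|\nabla_\G v\|^2 + \|\nabla_\G w_n\|^2 + o(1)$, and the $L^2$ terms converge. For the critical term I apply the Brezis-Lieb lemma to $g_n := (v_n+u_t)_+$, which converges a.e. to $g:=(v+u_t)_+$ and is bounded in $L^{\2}$:
\[
\int_\Omega g_n^{\2} = \int_\Omega g^{\2} + \int_\Omega |g_n-g|^{\2} + o(1).
\]
I then need to compare $|g_n-g|$ with $(w_n)_+$. Because $s\mapsto s_+$ is $1$-Lipschitz, $|g_n-g|\le |w_n|$. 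To get an asymptotic equality I would use the elementary inequality
\[
\big| |a+b|^{p}-|a|^{p}\big| \le \epsilon |a|^{p} + C_\epsilon |b|^{p}
\]
with $a = w_n$. Its cross terms against the fixed function $v+u_t$ tend to $0$, so $\int_\Omega |g_n-g|^{\2} = \int_\Omega (w_n)_+^{\2} + o(1)$. The same reasoning applied to the derivative term gives
\[
\int_\Omega (v_n+u_t)_+^{\2-1}v_n = \int_\Omega (v+u_t)_+^{\2-1}v + \int_\Omega (w_n)_+^{\2} + o(1).
\]
This splitting is the main technical step, and I expect it to be the main obstacle, since the positive part does not commute with the Brezis-Lieb splitting.

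Subtracting the corresponding identities for $v$ and using $J'(v)=0$, I obtain
\[
\|\nabla_\G w_n\|^2 - \int_\Omega (w_n)_+^{\2} = o(1), \qquad J(v) + \frac12\|\nabla_\G w_n\|^2 - \frac1{\2}\int_\Omega (w_n)_+^{\2} = c + o(1).
\]
Pass to a subsequence along which $\|\nabla_\G w_n\|^2\to \ell$, so that also $\int_\Omega (w_n)_+^{\2}\to\ell$. The Sobolev inequality \eqref{1.1.1} gives $\ell \ge S_\G\, \ell^{2/\2}$, so either $\ell=0$ or $\ell\ge S_\G^{Q/2}$. In the second case, $c = J(v) + \frac1Q\ell \ge \frac1Q S_\G^{Q/2}$, which contradicts $c<\frac1Q S_\G^{Q/2}$. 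Hence $\ell=0$, that is, $v_n\to v$ strongly in $\h$.
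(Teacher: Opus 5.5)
Your proposal is correct and follows the paper's proof essentially step for step. Both arguments get boundedness from Lemma~\ref{lem9}, show the weak limit is a critical point with $J(v)\ge 0$, and use a Br\'ezis--Lieb splitting of the positive-part critical term to obtain $\|\nabla_\G w_n\|^2_{L^2(\Omega)}-\int_\Omega (w_n)_+^{2^*_Q}\,d\xi=o(1)$. They then conclude with the dichotomy $\ell=0$ or $\ell\ge S_\G^{Q/2}$, which contradicts $c<\frac1Q S_\G^{Q/2}$. Your handling of the splitting is more careful than the paper's bare appeal to Br\'ezis--Lieb, but the comparison should be made with $(w_n)_+$ rather than $w_n$. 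The pointwise bound $|(g_n-g)-(w_n)_+|\le 2|v+u_t|$, together with a.e.\ and dominated convergence, gives $\|(g_n-g)-(w_n)_+\|_{L^{2^*_Q}(\Omega)}\to0$. The $u_t$-term in the derivative identity then follows directly from the weak convergence \eqref{con}.
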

	
	\begin{proof}
		By Lemma \ref{lem9}, the sequence $\{v_n\}$ is bounded in $\h$. Hence, up to a subsequence, there exists $v \in \h$ such that
		\begin{equation}\label{2.32}
			v_n \rightharpoonup v \ \text{in } \h, \qquad
			v_n \to v \ \text{in } L^q(\Omega), \ 2 \le q < 2_Q^{*}, \qquad
			v_n \to v \ \text{a.e. in } \Omega .
		\end{equation}
		Since $\{v_n\}$ is bounded in $\h$, by the continuous embedding $\h \hookrightarrow L^{2_Q^{*}}(\Omega)$ we deduce that the sequence
		$\{(v_n+u_t)^{2_Q^{*}-1}_{+}\}$ is bounded in
		$L^{\frac{2_Q^{*}}{2_Q^{*}-1}}(\Omega)$.
		Consequently, up to a subsequence, there holds
		\begin{equation}
			\label{con}
			(v_n+u_t)^{2_Q^{*}-1}_{+} 
			\rightharpoonup
			(v+u_t)^{2_Q^{*}-1}_{+} 
			\quad \text{in }
			L^{\frac{2_Q^{*}}{2_Q^{*}-1}}(\Omega).
		\end{equation}
		Passing to the limit in \eqref{2.26}, we obtain $J'(v)=0$, that is, $v$ is a weak solution of
		\begin{equation*}
			-\Delta_\G v = \lambda v + (v + u_t)^{\2-1}_{+}.
		\end{equation*}
		Since $u_t \leq 0$, we observe that
		\[ \int_\Omega (v+u_t)_+^{\2-1} v \ d\xi= \int_\Omega (v+u_t)_+^{\2-1}(v+u_t-u_t) \ d\xi \geq \int_\Omega (v+u_t)_+^{\2} \ d\xi.\]
		Recalling that $J'(v)= 0$, we conclude that
		\[
		J(v)=\left(\frac12-\frac{1}{2_Q^*}\right)
		\int_{\Omega}(v+u_t)^{2_Q^*}_+\,d\xi
		=\frac{1}{Q}\int_{\Omega}(v+u_t)^{2_Q^*}_+\,d\xi\ge 0.
		\] 
		Set $w_n \coloneq v_n - v$. By the Brezis-Lieb lemma, we have
		\begin{align}
			\int_{\Omega} |\nabla_\G v_n|^2 \, d\xi
			&=
			\int_{\Omega} |\nabla_\G w_n|^2 \, d\xi
			+ \int_{\Omega} |\nabla_\G v|^2 \, d\xi + o(1), \label{psbl1}\\
			\int_{\Omega} (v_n + u_t)^{2_Q^{*}}_{+} \, d\xi
			&=
			\int_{\Omega} (w_n)^{2_Q^{*}}_{+} \, d\xi
			+ \int_{\Omega} (v + u_t)^{2_Q^{*}}_{+} \, d\xi + o(1). \label{psbl2}
		\end{align}
		Therefore, combining \eqref{2.32}, \eqref{psbl1} and \eqref{psbl2} we obtain
		\begin{equation}\label{2.35}
			J(v_n) = J(v) + \frac{1}{2} \int_{\Omega} |\nabla_\G w_n|^2\ d\xi
			- \frac{1}{\2} \int_{\Omega} (w_n)^{\2}_{+}\ d\xi + o(1).
		\end{equation}
		It is easy to see that the following identity holds
		\begin{align}\label{2.34}
			\int_{\Omega} (v + u_t)^{\2-1}_{+} v\ d\xi
			= \int_{\Omega} (v + u_t)^{\2}_{+}\ d\xi
			- \int_{\Omega} (v + u_t)^{2_Q^{*}-1}_{+} u_t \ d\xi.
		\end{align}
		From \eqref{2.32} \eqref{psbl1} and \eqref{2.34}, we compute
		\begin{align}
			\langle J'(v_n), v_n \rangle_{\h}
			&= \int_{\Omega} \big( |\nabla_\G v_n|^2 - \lambda v_n^2 \big)\, d\xi
			- \int_{\Omega} (v_n+u_t)^{2_Q^{*}-1}_{+} v_n \, d\xi + o(1) \nonumber\\
			&= \int_{\Omega} |\nabla_\G w_n|^2 \, d\xi
			+ \int_{\Omega} |\nabla_\G v|^2 \, d\xi
			- \lambda \int_{\Omega} v^2 \, d\xi \nonumber\\
			&\quad - \int_{\Omega} (v_n+u_t)^{2_Q^{*}}_{+} \, d\xi
			+ \int_{\Omega} (v_n+u_t)^{2_Q^{*}-1}_{+} u_t \, d\xi
			+ o(1). \nonumber
		\end{align}
		Using $J'(v)=0$ and \eqref{2.34}, we rewrite the terms involving $v$ to obtain
		\begin{align}
			\langle J'(v_n), v_n \rangle_{\h}
			&= \int_{\Omega} |\nabla_\G w_n|^2 \, d\xi + \int_{\Omega} (v+u_t)^{2_Q^{*}}_{+} \, d\xi - \int_{\Omega} (v+u_t)^{2_Q^{*}-1}_{+} u_t \, d\xi
			\nonumber\\
			&\quad - \int_{\Omega} (v_n+u_t)^{2_Q^{*}}_{+} \, d\xi
			+ \int_{\Omega} (v_n+u_t)^{2_Q^{*}-1}_{+} u_t \, d\xi
			+ o(1). \nonumber
		\end{align}
		By the Br\'ezis-Lieb lemma, we have
		\[
		\int_{\Omega} (v_n+u_t)^{2_Q^{*}-1}_{+} u_t \, d\xi
		=
		\int_{\Omega} (v+u_t)^{2_Q^{*}-1}_{+} u_t \, d\xi
		+
		\int_{\Omega} (w_n)^{2_Q^{*}-1}_{+} u_t \, d\xi
		+ o(1).
		\]
		Substituting this identity into the previous expression and using \eqref{con}, \eqref{psbl2} yields
		\begin{align}
			\nonumber
			\langle J'(v_n), v_n \rangle_{\h}
			&= \int_{\Omega} |\nabla_\G w_n|^2 \, d\xi
			- \int_{\Omega} (w_n)^{2_Q^{*}}_{+} \, d\xi
			+ \int_{\Omega} (w_n)^{2_Q^{*}-1}_{+} u_t \, d\xi
			+ o(1).
		\end{align}
		Since $\langle J'(v_n), v_n \rangle_{\h} \to 0$ and
		\[
		\int_{\Omega} (w_n)^{2_Q^{*}-1}_{+} u_t \, d\xi \to 0
		\quad \text{as } n \to \infty,
		\]
		we conclude that
		\begin{equation}\label{2.36}
			\int_{\Omega} |\nabla_\G (w_n)|^2 \, d\xi
			=
			\int_{\Omega} (w_n)^{2_Q^{*}}_{+} \, d\xi
			+ o(1).
		\end{equation}
		We define
		\[
		\lim_{n\to\infty} \int_{\Omega} |\nabla_\G w_n|^2 \, d\xi = a \ge 0.
		\]
		\noindent\textbf{Case 1:} If $a=0$. In this case, $v_n \to v$ strongly in $\h$, and hence
		$c = J(v).$\\
		\noindent\textbf{Case 2:} If $a>0$. From \eqref{2.36} and the Sobolev inequality, we get
		\begin{align}
			\|\nabla_\G w_n\|_{L^2(\Omega)}^2 
			\geq S_\G \Big( \int_{\Omega} |w_n|^{\2} \ d\xi \Big)^{2/\2} \geq S_\G \Big( \int_{\Omega} (w_n)^{\2}_{+}\ d\xi \Big)^{2/\2} \notag = S_\G \Big( \int_{\Omega} |\nabla_\G w_n|^2 \ d\xi + o(1) \Big)^{2/\2}, \nonumber
		\end{align}
		which implies
		\begin{equation}\label{2.37}
			a \geq S_\G^{Q/2}.
		\end{equation}
		Combining \eqref{2.35}, \eqref{2.36},\eqref{2.37} and recall that $J(v)\ge 0$, we get
		\[
		c + o(1) = J(v_n) = J(v) + \left(\frac{1}{2}- \frac{1}{\2}\right) \|\nabla_\G w_n\|^2_{L^2(\Omega)} =\frac{a}{Q} \geq \frac{1}{Q} S_\G^{Q/2}.
		\]
		This contradicts Lemma \ref{lem8}. Therefore, $a= 0$.  
		Finally, we conclude that \[
		v_n \to v \quad \text{strongly in } \h .
		\].
	\end{proof}
	
	\begin{proof}[Proof of Theorem \ref{thm1.1}]
		As shown in Lemmas \ref{lem1}, \ref{lem7}, and \ref{lem8}, the functional
		$J$ satisfies the linking geometry of \cite[Theorem~5.1]{Mawhin}.
		Hence, there exists a Palais-Smale sequence $\{v_n\}\subset\h$ at some
		level $c < \frac{1}{Q}S_\G^{\frac{Q}{2}}$.
		By Lemma \ref{lem9}, the sequence $\{v_n\}$ is bounded in $\h$, and by
		Lemma \ref{lem10}, up to a subsequence,
		\[
		v_n \to v \quad \text{strongly in } \h .
		\]
		Therefore, $v$ is a critical point of $J$ and satisfies \eqref{2.4}. Moreover, $v\not\equiv 0$. Indeed, if $v=0$, then the strong convergence implies
		\[
		c=\lim_{n\to\infty}J(v_n)=J(0)=0,
		\]
		which contradicts $c>0$. Hence, $v$ is nontrivial.
		Finally, setting $u=v+u_t$, we obtain a nontrivial weak solution of
		\eqref{1.1}. This completes the proof.
	\end{proof}

	\section{The proof of Theorem \ref{thm1.2}}
	\label{sec3}
	We consider the resonant problem associated with the first eigenvalue
	\begin{equation}\label{3.1}
		\begin{cases}
			- \Delta_\G u = \lambda_{1} u + u^{\2-1}_{+} + f(\xi) & \text{in } \Omega, \\[2mm]
			u = 0 & \text{on } \partial\Omega,
		\end{cases}
	\end{equation}
	where $\lambda_{1}$ denotes the first eigenvalue of the sub-Laplacian $-\Delta_\G$
	on $\Omega$ with homogeneous Dirichlet boundary conditions.
	In the resonant case $\lambda=\lambda_{1}$, a necessary condition for the
	solvability of \eqref{3.1} is
	\begin{equation}\label{3.2}
		\int_{\Omega} f(\xi)\, e_{1}(\xi)\, d\xi < 0,
	\end{equation}
	where $e_{1}$ is the positive $L^{2}$-normalized eigenfunction corresponding to
	$\lambda_{1}$. Throughout this section, we further assume that $f$ is sufficiently small in $L^{2}(\Omega)$.
	Let $\mathcal{S}^{-} \coloneq \operatorname{span}\{e_{1}\}$ and
	$\mathcal{S}^{+} \coloneq (\mathcal{S}^{-})^{\perp}$ in $\h$. Then every
	$u \in \h$ admits a unique decomposition of the form
	\[
	u = t e_{1} + v,
	\quad t \in \mathbb{R}, \quad v \in \mathcal{S}^{+}.
	\]
	With respect to this decomposition, the energy functional
	$J : \h \to \mathbb{R}$ associated with \eqref{3.1} can be written as
	\[
	J(u)
	= \frac{1}{2}\int_{\Omega} |\nabla_\G v|^{2}\, d\xi
	- \lambda_{1} \int_{\Omega} v^{2}\, d\xi
	- \frac{1}{\2} \int_{\Omega} (v + t e_{1})^{\2}_{+}\, d\xi
	- \int_{\Omega} f(\xi)\,(v + t e_{1})\, d\xi,
	\]
	where $t = \int_{\Omega} u e_{1}\, d\xi$.
	
	We first study the behavior of the functional along the resonant direction associated with the first eigenfunction. This allows us to reduce the problem to a constrained variational problem on the orthogonal complement.
	\begin{lem}\label{lem3.1}
		For every fixed $v \in \mathcal{S}^{+}$, the functional $J$ is bounded above
		when restricted to the one-dimensional subspace $\mathcal{S}^{-}$.
	\end{lem}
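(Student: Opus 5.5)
Fix $v\in\mathcal{S}^+$ and consider the one-variable function $g(t)\coloneq J(v+te_1)$, $t\in\mathbb{R}$. The plan is to show that $g$ is continuous and that $g(t)\to-\infty$ as $|t|\to\infty$. Together these give $\sup_{t\in\mathbb{R}} g(t)<\infty$, which is the claim.

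\textbf{Reduction to one variable.} Since $e_1$ is an eigenfunction, $\langle v,e_1\rangle_{\h}=0$ implies $\int_\Omega v e_1\,d\xi=0$. Hence the quadratic part of $J$ splits orthogonally, and the $e_1$-contribution is
\[
\frac{t^2}{2}\left(\|\nabla_\G e_1\|_{L^2(\Omega)}^2-\lambda_1\|e_1\|_{L^2(\Omega)}^2\right)=0 .
\]
(I read the $\lambda_1$-term in the displayed formula for $J$ as $\frac{\lambda_1}{2}\int_\Omega u^2\,d\xi$; the conclusion does not depend on that constant.) Writing $f=\tau e_1+h$ with $\tau=\int_\Omega fe_1\,d\xi<0$ by \eqref{3.2} and $h\perp e_1$ in $L^2$, we get
\[
g(t)=C_v-\frac{1}{\2}\int_\Omega (v+te_1)_+^{\2}\,d\xi-\tau t,
\]
where $C_v$ is independent of $t$. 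Continuity of $g$ follows from continuity of $J$ on $\h$.

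\textbf{Case $t\to-\infty$.} Since $(v+te_1)_+\ge 0$, we have $g(t)\le C_v-\tau t=C_v+|\tau|t\to-\infty$. This is the only place where the sign condition $\int_\Omega fe_1\,d\xi<0$ is used.

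\textbf{Case $t\to+\infty$ (main obstacle).} Here $-\tau t=|\tau| t$ grows linearly, so the critical term must beat it; this is the only delicate step. Since $e_1>0$ is continuous in $\Omega$, I fix a compact ball $B\Subset\Omega$ with $e_1\ge m>0$ on $B$. Since $v\in L^{\2}(B)$, for $t\ge1$
\[
\int_\Omega (v+te_1)_+^{\2}\,d\xi\ \ge\ \int_B (v+te_1)_+^{\2}\,d\xi\ \ge\ t^{\2}\int_B\left(e_1+\frac{v}{t}\right)_+^{\2}\,d\xi .
\]
By Fatou's lemma (or dominated convergence), the last integral tends to $\int_B e_1^{\2}\,d\xi\ge m^{\2}|B|>0$. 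Thus the critical term is at least $c\,t^{\2}$ for large $t$, and since $\2>1$ it dominates the linear term $|\tau| t$, giving $g(t)\to-\infty$.

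Combining the two cases with continuity, $g$ is bounded above on $\mathbb{R}$, and its supremum is attained. This attained maximum is what the subsequent reduction to $\mathcal{S}^+$ will use.
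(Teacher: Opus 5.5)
Your proof is correct and follows essentially the same route as the paper. For $t\le 0$ the critical term is discarded and the linear term $-t\int_\Omega fe_1\,d\xi$ is controlled by the sign condition $\int_\Omega fe_1\,d\xi<0$; the paper uses this condition only implicitly, whereas you state it explicitly. For $t\to+\infty$ one writes $(v+te_1)_+=t\,(e_1+v/t)_+$ and bounds $\int_\Omega (e_1+v/t)_+^{2^*_Q}\,d\xi$ from below uniformly in large $t$.

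The only differences are technical. You get that lower bound by Fatou's lemma or dominated convergence on a ball where $e_1\ge m>0$, which is cleaner than the paper's Lusin-theorem construction of a set where $v$ is bounded. You also obtain the slightly stronger conclusion $J(v+te_1)\to-\infty$ as $t\to-\infty$, rather than mere boundedness on $(-\infty,0]$.
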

	\begin{proof}
		Fix $v \in \mathcal{S}^{+}$ and define
		\begin{equation*}
			h(t) \coloneq J(v + t e_{1}), \qquad t \in \mathbb{R}.
		\end{equation*}
		We first observe that for $t < 0$ the positive part satisfies
		$(v + t e_{1})_{+} \le v_{+}$, and therefore
		\[
		h(t)
		\le \frac{1}{2}\int_{\Omega} \bigl(|\nabla_\G v|^{2} - \lambda_{1} v^{2}\bigr)\, d\xi
		+ \|f\|_{L^{2}(\Omega)}\, \|v\|_{L^{2}(\Omega)}.
		\]
		Hence $h$ is bounded above on $(-\infty,0]$.
		Next, we consider the behavior of $h(t)$ as $t \to +\infty$.
		We claim that
		\begin{equation}\label{3.4}
			\lim_{t \to +\infty}
			\left(
			\frac{1}{\2} \int_{\Omega} (v + t e_{1})^{\2}_{+}\, d\xi
			+ \int_{\Omega} f(\xi)\,(v + t e_{1})\, d\xi
			\right)
			= +\infty.
		\end{equation}
		It follows that
		$h(t) \to -\infty$ as $t \to +\infty$.
		To prove \eqref{3.4}, set
		\[
		K \coloneq \sup_{\xi \in \Omega} e_{1}(\xi).
		\]
		Since $e_{1}$ is continuous and positive in $\Omega$, there exists an open set
		$\Omega_{0} \subset \Omega$ of positive measure such that
		\[
		e_{1}(\xi) > \frac{K}{2} \qquad \text{for all } \xi \in \Omega_{0}.
		\]
		By Lusin’s theorem, for any $\varepsilon > 0$ (choose, for instance,
		$\varepsilon = |\Omega_{0}|/2$), there exists a continuous function
		$g : \Omega_{0} \to \mathbb{R}$ such that
		\[
		\big|\bigl\{\xi \in \Omega_{0} : g(\xi) \neq v(\xi)\bigr\} \big| < \varepsilon.
		\]
		Consequently, the set
		\[
		H \coloneq \bigl\{\xi \in \Omega_{0} : g(\xi) = v(\xi)\bigr\}
		\]
		has a positive measure. Since $g$ is continuous on $\Omega_{0}$, it is bounded on $H$. Define
		\[
		M \coloneq \sup_{\xi \in H} |v(\xi)|.
		\]
		Then, for all $\xi \in H$ and for every $t \ge t_{0} \coloneq 4M/K$, we have
		\[
		e_{1}(\xi) + \frac{v(\xi)}{t}
		\ge \frac{K}{2} - \frac{M}{t}
		\ge \frac{K}{4}.
		\]
		It follows that
		\[
		\left(e_{1} + \frac{v}{t}\right)_{+}^{\2} \ge \left(\frac{K}{4}\right)^{\2}
		\quad \text{a.e. in } H,
		\]
		and hence there exists $\delta > 0$ such that
		\[
		\int_{\Omega} \left(e_{1} + \frac{v}{t}\right)_{+}^{\2}\, d\xi
		\ge \delta
		\qquad \text{for all } t \ge t_{0}.
		\]
		Indeed,
		\[
		\int_{\Omega} (v + t e_1)^{\2}_{+}\, d\xi
		= t^{\2} \int_{\Omega} \left(e_1 + \frac{v}{t}\right)^{\2}_{+}\, d\xi
		\ge \delta\, t^{\2},
		\]
		for all $t \ge t_0$, which proves \eqref{3.4}. Since $h$ is continuous and bounded above on $(-\infty,0]$ and satisfies
		$h(t)\to -\infty$ as $t\to+\infty$, it follows that $J(v+te_1)$ is bounded above
		with respect to $t\in\mathbb R$.
		Consequently, the functional $J$ is bounded above on $\mathcal{S}^{-}$.
	\end{proof}
	By Lemma~\ref{lem3.1}, the map
	$t \longmapsto J(v + t e_1)$
	is bounded above on $\mathbb{R}$ and satisfies
	$h(t) \to -\infty \quad \text{as } t \to +\infty.$
	Hence $h$ attains a global maximum on $\mathbb{R}$.
	We now characterize this
	maximizer and its dependence on $v$.
	
	\begin{lem}\label{lem3.4}
		For every $v \in \mathcal{S}^{+}$, there exists a unique real number $t(v)$ such that
		\[
		J(v + t(v) e_1) = \max_{t \in \mathbb{R}} J(v + t e_1).
		\]
		Moreover, the mapping
		\[
		v \in \mathcal{S}^{+} \longmapsto t(v) \in \mathbb{R}
		\]
		is of class $C^1$.
	\end{lem}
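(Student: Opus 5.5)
Fix $v \in \mathcal{S}^{+}$ and study $h(t) = J(v+te_1)$. Since $v \perp e_1$ in $\h$ (hence also in $L^2(\Omega)$, as $e_1$ is an eigenfunction), the quadratic part gives $\frac12\int_\Omega(|\nabla_\G v|^2-\lambda_1 v^2)\,d\xi$ plus $\frac{t^2}{2}(\lambda_1-\lambda_1)=0$. Thus
\[
h(t) = \frac12\int_\Omega\bigl(|\nabla_\G v|^2-\lambda_1 v^2\bigr)\,d\xi - \frac{1}{\2}\int_\Omega (v+te_1)_+^{\2}\,d\xi - \int_\Omega f v\,d\xi - t\int_\Omega f e_1\,d\xi .
\]
Existence of a maximizer then follows from Lemma~\ref{lem3.1}. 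On $(-\infty,0]$, $h$ is continuous, and as $t\to-\infty$ the linear term $-t\int_\Omega fe_1\,d\xi$ tends to $-\infty$ because $\int_\Omega fe_1\,d\xi<0$; the nonlinear term is nonpositive. As $t\to+\infty$, \eqref{3.4} gives $h\to-\infty$. So $h$ is coercive from above and attains its maximum.

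For uniqueness, I would show that $h$ is strictly concave on the region where the maximum can occur. We have
\[
h'(t) = -\int_\Omega (v+te_1)_+^{\2-1}e_1\,d\xi - \int_\Omega fe_1\,d\xi,
\qquad
h''(t) = -(\2-1)\int_\Omega (v+te_1)_+^{\2-2}e_1^2\,d\xi \le 0 .
\]
Hence $h$ is concave. The map $t\mapsto \int_\Omega (v+te_1)_+^{\2-1}e_1\,d\xi$ is nondecreasing, and it is strictly increasing wherever $\{v+te_1>0\}$ has positive measure. At any critical point $t^*$ we have $\int_\Omega (v+t^*e_1)_+^{\2-1}e_1\,d\xi = -\int_\Omega fe_1\,d\xi>0$, so the set $\{v+t^*e_1>0\}$ has positive measure. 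Since $e_1>0$, this set only grows with $t$, so $h''<0$ for all $t$ near and beyond $t^*$. On the other side, for $t<t^*$ the function $\int_\Omega (v+te_1)_+^{\2-1}e_1\,d\xi$ is strictly smaller than at $t^*$ whenever it is positive, and trivially smaller when it vanishes. Therefore $h'$ vanishes only at $t^*$. Concavity makes every critical point a global maximizer, so $t(v)=t^*$ is unique, characterized by $h'(t(v))=0$.

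For the $C^1$ dependence, apply the implicit function theorem to
\[
G(v,t) \coloneq \int_\Omega (v+te_1)_+^{\2-1}e_1\,d\xi + \int_\Omega fe_1\,d\xi,
\qquad G:\mathcal{S}^{+}\times\mathbb R\to\mathbb R .
\]
The map $s\mapsto s_+^{\2-1}$ is $C^1$ with derivative $(\2-1)s_+^{\2-2}$, and $\2-1>1$. Combined with the Sobolev embedding $\h\hookrightarrow L^{\2}(\Omega)$ and $e_1\in L^\infty(\Omega)$, standard Nemytskii-operator arguments give $G\in C^1$. The remaining step is the nondegeneracy condition
\[
\partial_t G(v,t(v)) = (\2-1)\int_\Omega (v+t(v)e_1)_+^{\2-2}e_1^2\,d\xi>0 ,
\]
which holds because $\{v+t(v)e_1>0\}$ has positive measure, as shown above. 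The implicit function theorem then gives a $C^1$ local solution branch, and by uniqueness it coincides with $t(\cdot)$; hence $t(\cdot)$ is $C^1$ on $\mathcal{S}^{+}$.

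The main obstacle is the $C^1$ regularity of the Nemytskii map $u\mapsto u_+^{\2-1}$ from $L^{\2}(\Omega)$ to $L^{\2/(\2-1)}(\Omega)$ at critical growth. It is handled by the usual continuity of $u\mapsto(\2-1)u_+^{\2-2}$ into $L^{\2/(\2-2)}(\Omega)$, via dominated convergence along subsequences. The strict-monotonicity argument also needs care, and it is the sign condition \eqref{3.2} that makes it work.
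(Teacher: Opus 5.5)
Your proof is correct and follows the paper's route. It uses the concavity of $h(t)=J(v+te_1)$, then the sign condition $\int_\Omega fe_1\,d\xi<0$ to force $\{v+te_1>0\}$ to have positive measure at any critical point, which gives both uniqueness and $h''(t(v))<0$, and finally the implicit function theorem for the $C^1$ dependence. Your extra observation that $h(t)\to-\infty$ as $t\to-\infty$ (from the linear term $-t\int_\Omega fe_1\,d\xi$) is genuinely needed for the maximum to be attained; the paper passes over this, citing only boundedness from above from Lemma~\ref{lem3.1}.
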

	\begin{proof}
		Let $t_0$ be a global maximizer of the function $h(t)=J(v+te_1)$. Since $h$ is differentiable, we have
		\begin{equation}\label{3.6}
			h'(t_0)
			= - \int_{\Omega} (v+t_0e_1)_{+}^{\2-1} e_1\, d\xi
			- \int_{\Omega} f e_1\, d\xi
			= 0.
		\end{equation}
		Moreover, a direct computation shows that
		\[
		h''(t)
		= - \int_{\Omega} (v+te_1)_{+}^{\2-2} e_1^{2}\, d\xi
		\le 0,
		\]
		and therefore $h$ is concave on $\mathbb{R}$. Thus, the set of maxima is a closed interval. We now show that this interval reduces to a single point. 
		
		Assume by contradiction that the maximizer is not unique. Then $h$ is constant
		on an interval of maximizers, which implies that
		$h''(t_0)=0$ at any maximizer $t_0$. From the expression of $h''$, this yields $(v+t_0e_1)_{+}=0$ a. e. in $\Omega$. Substituting this information
		into \eqref{3.6}, we obtain
		\[
		\int_{\Omega} f e_1\, d\xi = 0,
		\]
		which contradicts assumption~\eqref{3.2}. Hence, the maximizer is unique, and we
		denote it by $t(v)$. Since $h'(t(v))=0$ and $h''(t(v))<0$, the Implicit Function Theorem ensures that
		the mapping $v \mapsto t(v)$ is of class $C^{1}$. 
	\end{proof}
	
	For the special case $v=0$, the maximizer $t(0)$ of $h(t)=J(te_1)$ satisfies
	\begin{equation*}
		\int_{\Omega} (t(0)e_1)_{+}^{\2-1} e_1 \, d\xi
		+ \int_{\Omega} f e_1 \, d\xi = 0.
	\end{equation*}
	Since $f$ satisfies condition \eqref{3.2}, we have $t(0) > 0$. Indeed, we can solve for $t(0)$ explicitly as
	\begin{equation*}
		t(0) = \left( \frac{- \int_{\Omega} f e_1 \, d\xi}{\int_{\Omega} e_1^{\2} \, d\xi} \right)^{\frac{1}{\2-1}}.
	\end{equation*}
	Using the map $v \mapsto t(v)$, we define the functional
	\begin{equation*}
		F : \mathcal{S}^{+} \longrightarrow \mathbb{R}, 
		\qquad F(v) \coloneq J(v + t(v)e_1),
	\end{equation*}
	whose minimization over a suitable ball in $\mathcal{S}^{+}$ will lead to a solution of the original problem. At $v=0$, the functional takes the value
	\begin{equation*}
		F(0) = J(t(0) e_1) = - \frac{1}{\2} \int_{\Omega} (t(0) e_1)^{\2} \, d\xi - t(0) \int_{\Omega} f e_1 \, d\xi
		= \frac{\2-1}{\2} t(0) \int_{\Omega} (-f e_1) \, d\xi > 0.
	\end{equation*}
	Putting the value of $t(0)$ in the above, we get
	\begin{equation}
		\label{3.13}
		F(0) = \frac{Q+2}{2Q} \frac{ \displaystyle \left( -\int_\Omega f\ e_1 \ d\xi \right)^{\frac{\2}{\2-1}}}{\left( \displaystyle \int_\Omega e_1 ^{\2} d\xi \right)^{\frac{1}{\2-1}}}.
	\end{equation}
	Next, for general $v \in \mathcal{S}^{+}$, the functional can be written as
	\begin{equation*}
		F(v) = \frac{1}{2}\int_{\Omega} (|\nabla_\G v|^{2} - \lambda_{1} v^{2})\ d\xi 
		- \frac{1}{\2}\int_{\Omega} (v + t(v)e_{1})^{\2}_{+}\ d\xi 
		- \int_{\Omega} f(v + t(v)e_{1})\ d\xi.
	\end{equation*}
	The purpose of introducing the functional $F$ is to obtain a critical point of $J$ by minimizing $F$ on $\mathcal{S}^{+}$. To this end, we show that, under suitable smallness assumptions on the function $f$, the functional $F$ admits a local minimum in the interior of a ball of $\mathcal{S}^{+}$.
	Let
	\begin{align}
		K_{1} &= \frac{\lambda_2^{-\frac{Q}{4}}}{Q+2} \ S_\G^{\frac{Q}{4}} 
		\left(\frac{Q}{Q+2}\right)^{\frac{Q-2}{4}} (\lambda_{2} - \lambda_{1})^{\frac{Q+2}{4}}, \label{3.15} \\
		K_{2} &= 
		\min \left\{
		\left(\frac{2}{Q+2}\right)^{\frac{Q+2}{2Q}} S_\G^{\frac{Q+2}{4}},\,
		\left(\frac{1}{Q+2}\right)^{\frac{Q+2}{2Q}} \|e_1\|_{\2}
		\left[\frac{Q}{Q+2}\left(1-\frac{\lambda_1}{\lambda_2}\right)S_\G \right]^{\frac{Q+2}{4}}
		\right\}. \label{3.16}
	\end{align}
	In addition to condition \eqref{3.2}, we assume that $f$ satisfies
	\begin{equation}\label{3.17}
		\|f\|_{L^2(\Omega)} \leq K_{1}, \qquad -\int_{\Omega} f e_{1}\ d\xi < K_{2}.
	\end{equation}
	\begin{lem}\label{lem3.2}
		Assume that conditions \eqref{3.2} and \eqref{3.17} are satisfied.
		Then there exists a constant $\alpha_0 > 0$ such that
		\begin{equation*}
			F(v) \ge \alpha_0 > F(0),
		\end{equation*}
		for every $v \in \mathcal{S}^{+}$ with $\|\nabla_\G v\|_{L^2(\Omega)} = \sigma_{0}$, where
		\[
		\sigma_{0}
		= \left[\frac{Q}{Q+2}\left(1 - \frac{\lambda_{1}}{\lambda_{2}}\right)\right]^{\frac{Q-2}{4}}
		S_\G^{\frac{Q}{4}}.
		\]
	\end{lem}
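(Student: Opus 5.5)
Since $t(v)$ is a global maximizer of $t\mapsto J(v+te_1)$, we have $F(v)=\max_{t}J(v+te_1)\ge J(v+te_1)$ for every $t$. The plan is to bound $F$ from below by testing with a single convenient value of $t$, then to use the smallness assumptions \eqref{3.17} on $f$ to compare the resulting lower bound with the explicit value \eqref{3.13} of $F(0)$.

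\textbf{Step 1 (lower bound).} Take $t=0$. This gives, for $v\in\mathcal S^+$,
\[
F(v)\ge J(v)=\frac12\int_\Omega\bigl(|\nabla_\G v|^2-\lambda_1v^2\bigr)\,d\xi-\frac1{\2}\int_\Omega v_+^{\2}\,d\xi-\int_\Omega fv\,d\xi .
\]
On $\mathcal S^+$ we have $\|v\|_{L^2}^2\le \lambda_2^{-1}\|\nabla_\G v\|_{L^2}^2$. The Sobolev inequality gives $\int v_+^{\2}\le S_\G^{-\2/2}\|\nabla_\G v\|^{\2}$, and Cauchy--Schwarz gives $|\int fv|\le \|f\|_{L^2}\lambda_2^{-1/2}\|\nabla_\G v\|$. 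With $\sigma=\|\nabla_\G v\|$ this yields
\[
F(v)\ge \Psi(\sigma):=\frac12\Bigl(1-\frac{\lambda_1}{\lambda_2}\Bigr)\sigma^2-\frac{1}{\2}S_\G^{-\frac{Q}{Q-2}}\sigma^{\2}-\lambda_2^{-1/2}\|f\|_{L^2}\sigma .
\]

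\textbf{Step 2 (evaluate at $\sigma_0$).} Substitute $\sigma=\sigma_0$. Here $\sigma_0$ is chosen so that $S_\G^{-\frac{Q}{Q-2}}\sigma_0^{\2-2}=\frac{Q}{Q+2}(1-\lambda_1/\lambda_2)$; this should be checked with the exponents $\sigma_0^{4/(Q-2)}=[\cdot]\,S_\G^{Q/(Q-2)}$. It follows that the first two terms combine to $c_Q(1-\lambda_1/\lambda_2)\sigma_0^2$ for an explicit positive constant $c_Q$ depending only on $Q$. The condition $\|f\|_{L^2}\le K_1$ should then make the linear term at most a fixed fraction of this quadratic term. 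I expect the definition \eqref{3.15} of $K_1$ to be tailored exactly for this: after writing $\sigma_0$ in terms of $S_\G,\lambda_1,\lambda_2$, one obtains $\lambda_2^{-1/2}K_1\sigma_0\le \tfrac12 c_Q(1-\lambda_1/\lambda_2)\sigma_0^2$ or a similar inequality. Define $\alpha_0$ as the resulting explicit positive constant, of the form $C(Q)\,(1-\lambda_1/\lambda_2)^{Q/2}S_\G^{Q/2}$.

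\textbf{Step 3 (compare with $F(0)$).} By \eqref{3.13},
\[
F(0)=\frac{Q+2}{2Q}\Bigl(-\int_\Omega fe_1\Bigr)^{\frac{2Q}{Q+2}}\|e_1\|_{\2}^{-\frac{2Q}{Q+2}} .
\]
This quantity is increasing in $-\int fe_1$. Raising the condition $-\int fe_1<K_2$ to the power $\frac{2Q}{Q+2}$, the second entry of the minimum in \eqref{3.16} is designed so that the factor $\|e_1\|_{\2}$ cancels and $F(0)<C'(Q)\,[\tfrac{Q}{Q+2}(1-\lambda_1/\lambda_2)S_\G]^{Q/2}$. This bound must be $\le\alpha_0$. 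The first entry of the minimum appears to ensure in addition that $F(0)<\frac1Q S_\G^{Q/2}$, which is needed later for compactness rather than for this lemma.

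\textbf{Main obstacle.} The real content is in the constant bookkeeping of Steps 2 and 3: verifying that the specific $K_1,K_2$ give strict inequalities with compatible numerical constants. If the crude bound $t=0$ turns out to be insufficient, I would instead try to use the negative part. Since $-\int f(v+t e_1)$ has the favorable sign for $t>0$, one could test with $t=t(0)$. I would first try $t=0$, since the exponents in $K_1$, namely $(\lambda_2-\lambda_1)^{(Q+2)/4}$, match precisely $\sigma_0\cdot(1-\lambda_1/\lambda_2)$ scaled by $\lambda_2^{-1/2}$.
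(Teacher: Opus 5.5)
Your proposal follows the paper's proof step for step: $F(v)\ge J(v)$ from $t=0$, then the $\lambda_2$ spectral bound on $\mathcal S^+$, Sobolev and Cauchy--Schwarz, evaluation at $\sigma_0$, and finally the two smallness conditions on $f$. The bookkeeping you left open does close with the choice $t=0$, so the fallback $t=t(0)$ is not needed: the first two terms combine with $c_Q=\frac{2}{Q+2}$, and $\lambda_2^{-1/2}K_1$ is exactly half of the resulting bracket, which gives $\alpha_0=\frac1Q\bigl[\frac{Q}{Q+2}(1-\frac{\lambda_1}{\lambda_2})S_\G\bigr]^{Q/2}$, while the second entry of $K_2$ gives $F(0)<\frac{1}{2Q}\bigl[\frac{Q}{Q+2}(1-\frac{\lambda_1}{\lambda_2})S_\G\bigr]^{Q/2}=\frac{\alpha_0}{2}$.
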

	\begin{proof}
		Let $v \in \mathcal{S}^{+}$. Since $t(v)$ maximizes $t \mapsto J(v+te_1)$, we have
		\[
		F(v) = J(v+t(v)e_1) \ge J(v).
		\]
		Using the spectral inequality
		\[
		\int_{\Omega} |\nabla_\G v|^{2}\, d\xi 
		\ge \lambda_{2} \int_{\Omega} v^{2}\, d\xi,
		\qquad \forall v \in \mathcal{S}^{+},
		\]
		we obtain
		\begin{align}
			F(v) &\geq J(v)
			= \frac12 \int_{\Omega} \big(|\nabla_\G v|^{2} - \lambda_{1} v^{2}\big)\, d\xi
			- \frac{1}{\2} \int_{\Omega} |v|^{\2}\, d\xi
			- \int_{\Omega} f v\, d\xi \notag\\
			&\ge \frac12\Big(1-\frac{\lambda_{1}}{\lambda_{2}}\Big)
			\int_{\Omega} |\nabla_\G v|^{2}\, d\xi
			- \frac{1}{\2} \int_{\Omega} |v|^{\2}\, d\xi
			- \|f\|_{2}\|v\|_{2}.
			\label{3.19}
		\end{align}
		By the Sobolev inequality
		\[
		\|v\|_{L^{\2}(\Omega)} \le S_\G^{-\frac{1}{2}} \|\nabla_\G v\|_{L^2(\Omega)},
		\]
		and the setting
		$\sigma \coloneq \|\nabla_\G v\|_{L^2(\Omega)}$, we deduce from
		\eqref{3.19} that
		\begin{equation}\nonumber
			F(v)
			\ge \Phi(\sigma)
		\end{equation}
		where
		\[
		\Phi(\sigma)
		=
		\frac12\Big(1-\frac{\lambda_1}{\lambda_2}\Big)\sigma^{2}
		-\frac{1}{\2} S_\G^{-\frac{Q}{Q-2}} \sigma^{\2}- \|f\|_{2}\lambda_{2}^{-1/2}\sigma,
		\qquad \sigma \ge 0.
		\]
		A direct computation shows that $\Phi$ attains its maximum at
		\[
		\sigma_0=
		\left[
		\frac{Q}{Q+2}
		\Big(1-\frac{\lambda_1}{\lambda_2}\Big)
		\right]^{\frac{Q-2}{4}}
		S_\G^{\frac{Q}{4}},
		\]
		Evaluating $\Phi$ at $\sigma_{0}$ yields
		\[
		\Phi(\sigma_{0})
		=
		\sigma_{0}
		\left[
		\frac{2}{Q+2} \Big(\frac{Q}{Q+2}\Big)^{\frac{Q-2}{4}}
		\Big(1-\frac{\lambda_{1}}{\lambda_{2}}\Big)^{\frac{Q+2}{4}} S_\G^{\frac{Q}{4}}
		- \|f\|_{2}\lambda_{2}^{-1/2}
		\right].
		\]
		By the smallness assumption $\|f\|_{L^2(\Omega)} \le K_1$ in \eqref{3.17} and using \eqref{3.15}, we have
		\begin{align*}
			\Phi(\sigma_{0})
			&\geq 
			\sigma_{0}
			\left[
			\frac{2}{Q+2} \Big(\frac{Q}{Q+2}\Big)^{\frac{Q-2}{4}}
			\Big(1-\frac{\lambda_{1}}{\lambda_{2}}\Big)^{\frac{Q+2}{4}} S_\G^{\frac{Q}{4}}
			- K_1 \lambda_{2}^{-1/2}
			\right] \\
			& \geq \sigma_0 \left[
			\frac{1}{Q+2} \Big(\frac{Q}{Q+2}\Big)^{\frac{Q-2}{4}}
			\Big(1-\frac{\lambda_{1}}{\lambda_{2}}\Big)^{\frac{Q+2}{4}} S_\G^{\frac{Q}{4}}
			\right] \\
			& =\frac{1}{Q+2} \Big( \frac{Q}{Q+2}\Big)^{\frac{Q-2}{2}} \Big( 1- \frac{\lambda_1}{\lambda_2}\Big)^{\frac{Q}{2}} S_\G^{\frac{Q}{2}} \coloneq \alpha_0 >0.
		\end{align*}
		From \eqref{3.19}, we obtain
		\[   F(v) \ge \alpha_0   \quad \text{ for all } v \in \mathcal{S}^+  \text{ with } \|\nabla_\G v \|_{L^2(\Omega)} = \sigma_0.\]
		Using the explicit expression of 
		$F(0)$ in \eqref{3.13} together with the second smallness condition in \eqref{3.17}, we obtain
		\begin{align*}
			F(0) \leq \frac{1}{2Q} \left[ \frac{Q}{Q+2} \Big( 1-\frac{\lambda_1}{\lambda_2}\Big) S_\G \right]^{\frac{Q}{2}}
		\end{align*}
		Consequently, under assumptions \eqref{3.2} and \eqref{3.17}, there exists a constant $\alpha_0>0$ such that
		\[
		F(v) \ge \alpha_0 > F(0),
		\qquad
		\forall\, v \in \mathcal{S}^{+}
		\text{ with } \|\nabla_\G v\|_{L^2(\Omega)}=\sigma_0.
		\]
		This completes the proof.
	\end{proof}
	From assumption \eqref{3.17} we also obtain the estimate
	\begin{equation}\label{3.23}
		F(0) < \frac{1}{Q} S_\G^{Q/2}.
	\end{equation}
	In view of Lemma~\ref{lem3.2}, the functional $F$ exhibits a local geometric
	structure on $\mathcal{S}^{+}$, which motivates the following constrained
	minimization problem:
	\begin{equation}\label{3.24}
		m \coloneq \min \bigl\{ F(v) : v \in B_{d}(0,\sigma_0) \bigr\}.
	\end{equation}
	
	\begin{lem}
		The minimization problem \eqref{3.24} admits a nontrivial solution
		$v_{0} \in B_{d}(0,\sigma_{0})$. Consequently, problem \eqref{3.1}
		possesses a weak solution.
	\end{lem}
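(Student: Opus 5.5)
The plan is a Brézis–Nirenberg-type constrained minimization of $F$ over the ball $B_d(0,\sigma_0)\cap\mathcal S^+$. The threshold $F(0)<\frac1Q S_\G^{Q/2}$ from \eqref{3.23} supplies the compactness, and Lemma~\ref{lem3.2} keeps the minimizer away from the boundary sphere. Afterwards we lift the minimizer back to $\h$ via $u_0=v_0+t(v_0)e_1$ and check that it is a critical point of $J$.

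First, $m$ is finite. On the ball we have $F(v)\ge \Phi(\|\nabla_\G v\|_{L^2(\Omega)})$ by the estimate in the proof of Lemma~\ref{lem3.2}, and $\Phi$ is bounded below on $[0,\sigma_0]$. Also $m\le F(0)<\alpha_0$. Take a minimizing sequence $\{v_n\}$ in the closed ball. By Ekeland's variational principle applied to $F$ on this complete metric space, we may assume $F(v_n)\to m$ and $\|F'(v_n)\|\to0$ along directions tangent to the ball. Since $F\ge\alpha_0>m$ on the sphere $\|\nabla_\G v\|=\sigma_0$, eventually $\|\nabla_\G v_n\|<\sigma_0$, so $F'(v_n)\to0$ in $(\mathcal S^+)^*$.

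Next, set $t_n=t(v_n)$. We need a bound on $t_n$. From \eqref{3.6},
\[
\int_\Omega (v_n+t_ne_1)_+^{\2-1}e_1\,d\xi=-\int_\Omega fe_1\,d\xi>0 .
\]
This forces $t_n$ to be bounded above, because for large $t$ the left side grows like $t^{\2-1}$, uniformly thanks to the $L^{\2}$ bound on $v_n$. It also forces $t_n$ to be bounded below, because for $t\to-\infty$ the positive part $(v_n+te_1)_+$ tends to $0$ in $L^{\2-1}$ uniformly on bounded sets of $L^{\2}$; I expect a short argument via the set where $v_n>|t|e_1/2$ to handle this.

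Then $u_n:=v_n+t_ne_1$ is bounded in $\h$. Since $\partial_t J(v_n+t_ne_1)=0$, the chain rule gives $F'(v_n)=J'(u_n)|_{\mathcal S^+}$. Combined with the $e_1$-direction vanishing, this shows $J'(u_n)\to0$ in $\h^*$, so $\{u_n\}$ is a bounded Palais–Smale sequence for $J$ at level $m<\frac1QS_\G^{Q/2}$. Pass to a weak limit $u_0=v_0+t_0e_1$. It is a weak solution of \eqref{3.1} by the weak convergence of $(u_n)_+^{\2-1}$ as in \eqref{con}.

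The main obstacle is upgrading to strong convergence, which is needed to conclude $J(u_0)=m$ and $\|\nabla_\G v_0\|<\sigma_0$. I would repeat the Brézis–Lieb argument of Lemma~\ref{lem10} with $w_n=u_n-u_0$. Because of the linear term $\int fu$ and the compact embedding, this gives $\|\nabla_\G w_n\|^2=\int (w_n)_+^{\2}+o(1)$ and $J(u_n)=J(u_0)+\frac1Q\|\nabla_\G w_n\|^2+o(1)$. If $\|\nabla_\G w_n\|^2\to a>0$, then $a\ge S_\G^{Q/2}$, and we would get
\[
m\ge J(u_0)+\frac1QS_\G^{Q/2}.
\]
This is where we need $J(u_0)\ge0$, or at least $J(u_0)>m-\frac1QS_\G^{Q/2}$. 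To get it, I would use that $J(u_0)\ge J(v_0)$ is not directly available. Instead I would compare with $F(v_0)$: weak lower semicontinuity of the norm gives $\|\nabla_\G v_0\|\le\sigma_0$, so $v_0$ lies in the ball and $F(v_0)\ge m$. Together with $J(u_0)\le F(v_0)$, this suggests arguing $F(v_0)\le\liminf J(u_n)-\frac aQ$, which yields $m\le m-\frac aQ$ and hence $a=0$. Once strong convergence holds, $v_0$ attains $m$ in the interior, so $v_0$ is a genuine local minimum of $F$, and $u_0$ is a nontrivial weak solution; it is nonzero since $f\not\equiv0$.
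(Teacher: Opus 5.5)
Everything up to the compactness step is sound. On one point you are more careful than the paper, which takes the boundedness of $t(v_n)$ for granted in \eqref{3.35}. For your lower bound on $t_n$ you can simply use that, for $t<0$, $\int_\Omega (v+te_1)_+^{\2-1}e_1\,d\xi\le |t|^{-1}\int_\Omega v_+^{\2}\,d\xi$. You also already get a weak solution $u_0$ of \eqref{3.1}, as the weak limit of a bounded Palais--Smale sequence.

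The gap is in the step you flag yourself: showing that $m$ is attained. The splitting gives $m=J(u_0)+\frac{a}{Q}$, so you need $J(u_0)\ge m$. The inequality you invoke, $J(u_0)\le F(v_0)$, points the wrong way. Combined with $F(v_0)\ge m$ it only yields $m-\frac{a}{Q}\le F(v_0)$, which is compatible with $a>0$. The claimed $F(v_0)\le\liminf J(u_n)-\frac{a}{Q}$ would require $F(v_0)\le J(u_0)$, and nothing in your argument supplies it.

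The missing idea is that the limit $t_0$ of $t(v_n)$ equals $t(v_0)$; this is exactly the paper's claim \eqref{3.35}. It follows from what you already have. Since $J'(u_0)=0$ and $\int_\Omega\nabla_\G w\cdot\nabla_\G e_1\,d\xi=\lambda_1\int_\Omega we_1\,d\xi$ for all $w\in\h$,
\[
0=J'(u_0)e_1=-\int_\Omega (v_0+t_0e_1)_+^{\2-1}e_1\,d\xi-\int_\Omega fe_1\,d\xi .
\]
So $t_0$ is a critical point of the concave function $t\mapsto J(v_0+te_1)$, hence its unique maximizer by Lemma~\ref{lem3.4}. Therefore $J(u_0)=F(v_0)\ge m$, and $m=J(u_0)+\frac{a}{Q}\ge m+\frac{a}{Q}$ forces $a=0$. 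Strong convergence, $F(v_0)=m$, and interiority via Lemma~\ref{lem3.2} then follow as you say.

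Once this step is in place, neither the threshold \eqref{3.23} nor the bound $a\ge S_\G^{Q/2}$ is needed, because the weak limit stays in the admissible ball.

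Finally, you prove $u_0\ne0$ rather than $v_0\neq0$ as the statement asserts. That is the meaningful conclusion, since nothing in this argument forces $v_0\neq0$. For example, take $f=-(\tau e_1)^{\2-1}$ with $\tau>0$ small: then $\tau e_1$ itself solves \eqref{3.1}, and $v=0$ is a strict local minimizer of $F$.
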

	\begin{proof}
		From \eqref{3.23}, we have
		\begin{equation}\label{3.25}
			m < \frac{1}{Q} S_\G^{Q/2}.
		\end{equation}
		Let $\{v_n\} \subset B_d(0, \sigma_0)$ be a minimizing sequence for \eqref{3.24}. Since $\|\nabla_\G v_n\|_{L^2(\Omega)} \le \sigma_0$, up to a subsequence, there exists $v_0 \in \mathcal{S}^+$ such that
		\begin{equation}\label{3.26}
			v_n \rightharpoonup v_0 \text{ weakly in } \h, \quad
			v_n \to v_0 \text{ in } L^q(\Omega), \ 2 \le q < \2, \quad
			v_n \to v_0 \text{ a.e. in } \Omega.
		\end{equation}
		By weak lower semicontinuity,
		\begin{equation}\nonumber
			\|\nabla_\G v_0\|_{L^2(\Omega)} \le \liminf_{n\to\infty} \|\nabla_\G v_n\|_{L^2(\Omega)} \le \sigma_0.
		\end{equation}
		Using Ekeland’s variational principle, we may assume that
		\begin{equation}\nonumber
			F(v_n) \to m, \qquad F'(v_n) \to 0 \text{ as } n \to \infty.
		\end{equation}
		This implies
		\begin{align}
			\frac{1}{2}\int_{\Omega} \big(|\nabla_\G v_{n}|^{2}-\lambda_{1} v_{n}^{2}\big)\ d\xi 
			- \frac{1}{\2} \int_{\Omega} (v_{n}+t(v_{n})e_{1})^{\2}_{+}\ d\xi 
			- \int_{\Omega} f(v_{n}+t(v_{n})e_{1})\ d\xi
			&= m+o(1), \label{3.30} \\
			\int_{\Omega} \big(\nabla_\G v_{n} \cdot \nabla_\G \phi -\lambda_{1} v_{n} \ \phi \big)\ d\xi
			- \int_{\Omega} (v_{n}+t(v_{n})e_{1})^{\2-1}_{+} \phi\ d\xi
			- \int_{\Omega} f\ \phi\ d\xi
			&= o(1), \label{3.31}
		\end{align}
		for all $\phi \in \h$. Passing to the limit in \eqref{3.31} and using \eqref{3.26} together with Lemma \ref{lem3.4}, we find that $v_0$ satisfies
		\begin{equation}\nonumber
			-\Delta_\G v_0 = \lambda_1 v_0 + (v_0 + t(v_0)e_1)^{\2-1}_+ + f
		\end{equation}
		in the weak sense, along with
		\begin{equation}\nonumber
			\int_\Omega \big((v_0 + t(v_0)e_1)^{\2-1}_+ e_1 + f e_1\big)\, d\xi = 0.
		\end{equation}
		It remains to show $v_0 \neq 0$. 
		We first claim that
		\begin{equation}\label{3.35}
			\lim_{n\to\infty} t(v_{n}) = t(v_{0}).
		\end{equation}
		Indeed, if not, suppose $\lim_{n} t(v_{n}) = t_{1} \neq t(v_{0})$. Then by \eqref{3.6},
		\[
		\int_{\Omega} (v_{n}+t(v_{n})e_{1})^{\2-1}_{+} e_{1}\ d\xi
		= -\int_{\Omega} fe_{1}\ d\xi
		= \int_{\Omega} (v_{0}+t(v_{0})e_{1})^{\2-1}_{+} e_{1}\ d\xi,
		\]
		which would yield
		\[
		\int_{\Omega} (v_{0}+t_{1}e_{1})^{\2-1}_{+} e_{1}\ d\xi
		= \int_{\Omega} (v_{0}+t(v_{0})e_{1})^{\2-1}_{+} e_{1}\ d\xi,
		\]
		a contradiction. Thus \eqref{3.35} holds. Let $w_n \coloneq v_n - v_0$. Using the Br\'ezis-Lieb lemma in \eqref{3.30} and \eqref{3.31}, we obtain
		\begin{align}
			F(v_n) &= F(v_0) + \frac{1}{2} \int_\Omega |\nabla_\G w_n|^2\,d\xi - \frac{1}{\2} \int_\Omega |w_n|^{\2}\,d\xi = m+ o(1), \label{3.37} \\
			F'(v_n) v_n & = \int_\Omega |\nabla_\G w_n|^2\,d\xi - \int_\Omega |w_n|^{\2}\,d\xi = o(1). \label{3.38}
		\end{align}
		Let $b \coloneq \lim_{n\to\infty} \int_\Omega |\nabla_\G w_n|^2\,d\xi \ge 0$. If $b=0$, then $v_n \to v_0$ strongly and $v_0 \neq 0$ follows. Suppose $b>0$. By the Sobolev inequality and \eqref{3.38}, we have
		\begin{equation}\nonumber
			b = \lim_{n\to\infty} \int_\Omega |\nabla_\G w_n|^2\,d\xi \ge S_\G \lim_{n\to\infty} \left(\int_\Omega |w_n|^{\2}\,d\xi \right)^{2/\2} \ge S_\G b ^{2/\2}.
		\end{equation}
		That is \begin{equation}\label{3.39}
			b \geq S_\G^{Q/2}.
		\end{equation}
		Combining \eqref{3.25}, \eqref{3.37} and \eqref{3.39}, we obtain
		\[
		m = \lim_{n\to\infty} F(v_n) \ge F(v_0) + \frac{1}{Q} b \ge F(v_0) + \frac{1}{Q} S_\G^{Q/2} > F(v_0)+ m,
		\]
		It follows that $F(v_0) <0$. Since $F(0) >0$, this implies that $v_0 \neq 0$. Finally, Lemma~\ref{lem3.2} ensures that $F(v) \ge \alpha_0 > 0$ on the boundary $\|\nabla_\G v\|_{L^2(\Omega)} = \sigma_0$, so $v_0 \in B_d(0, \sigma_0)$. Hence $v_0$ is a nontrivial minimizer of $F$ and yields a weak solution of \eqref{3.1}.
	\end{proof}
	
	\section{Proof of Theorem \ref{thm1.3}}
	\label{sec4}
	In this section, we study the bifurcation of the set of solutions of \eqref{1.1}. Let $u_t(\lambda) = u_t$ denote the nonpositive solution obtained in Section \ref{sec2}. 
	If $f = t e_1 + h$ with $h \in \ker(-\Delta_\G - \lambda I)^\perp$, then $u_t(\lambda)$ is well defined for all $\lambda \neq \lambda_1$. In the case $\lambda = \lambda_k$, $k \neq 1$, the set of solutions of \eqref{1.1} bifurcating from $(\lambda_k, u_t(\lambda_k))$ is equivalent to the set of solutions of \eqref{2.4} bifurcating from $(\lambda_k,0)$.  
	Let
	\[
	\mathcal{S}^- = \operatorname{span}\{e_1, \dots, e_k\}, 
	\qquad 
	\mathcal{S}^+ = (\mathcal{S}^-)^\perp.
	\]
	
	\begin{prop}
		Every eigenvalue $\lambda_k$ of $-\Delta_\G$ gives rise to a bifurcation point $(\lambda_k,0)$ of \eqref{2.4}. Consequently, Theorem \ref{1.3} follows.
	\end{prop}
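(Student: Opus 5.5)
Since $u_t(\lambda)\le 0$ satisfies the linear problem \eqref{2.1}, the bifurcation question reduces to \eqref{2.4}, that is, to the operator equation
\[
v=\lambda L v+ H(\lambda,v),\qquad L\coloneq(-\Delta_\G)^{-1},\quad H(\lambda,v)\coloneq L\big((v+u_t(\lambda))_+^{\2-1}\big),
\]
in $\h$. I will apply Rabinowitz's local bifurcation theorem (or the variational version of Marino/B\"ohme--Rabinowitz) to this equation. The operator $L$ is compact and self-adjoint on $\h$, with characteristic values $\lambda_j$. Both the classical Leray--Schauder-degree version (odd multiplicity) and the variational version (any multiplicity) therefore reduce the proof to checking two properties of $H$. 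First, $H(\lambda,v)=o(\|v\|)$ as $v\to0$, uniformly for $\lambda$ near $\lambda_k$. Second, $H$ is a gradient, with enough regularity.

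\textbf{Step 1: $H$ vanishes to high order near $v=0$.} This is the key step, and I expect it to be the main obstacle. For $\lambda$ near $\lambda_k$, the hypothesis $h\in\ker(-\Delta_\G-\lambda_k I)^\perp$ together with continuity of $\lambda\mapsto u_0(\lambda)$ on this complement lets me choose $t$ so that $u_t(\lambda)=u_0(\lambda)+\frac{t}{\lambda_1-\lambda}e_1\le0$ uniformly in $\lambda$.

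Since $u_t\le0$, we have $(v+u_t)_+\le v_+$ pointwise. Hence, by duality and the Sobolev inequality,
\[
\|H(\lambda,v)\|_{\h}\le S_\G^{-1/2}\big\|(v+u_t)_+^{\2-1}\big\|_{L^{2^*_Q/(2^*_Q-1)}}\le S_\G^{-1/2}\|v_+\|_{L^{\2}}^{\2-1}\le C\|v\|_{\h}^{\2-1}.
\]
Because $\2-1>1$, this gives $H=o(\|v\|)$ uniformly in $\lambda$, with no need for strict negativity of $u_t$. Compactness of $H$ fails because of the critical exponent. This is why I would use the variational bifurcation theorem, which does not require compactness of the nonlinear part.

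\textbf{Step 2: the variational structure.} The term $H(\lambda,\cdot)$ is the $\h$-gradient of
\[
G(\lambda,v)=\frac1{\2}\int_\Omega (v+u_t(\lambda))_+^{\2}\,d\xi,
\]
which is $C^1$ with $G(\lambda,v)=o(\|v\|^2)$. The variational bifurcation theorem of Rabinowitz for potential operators (Rabinowitz's CBMS notes, Theorem 11.4) then gives, for each $k>1$, that $(\lambda_k,0)$ is a bifurcation point of \eqref{2.4}. This holds regardless of the multiplicity of $\lambda_k$. That theorem needs $G\in C^2$ near $0$ (or at least $C^1$ with gradient $o(\|v\|)$, via a Lyapunov--Schmidt reduction).

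\textbf{Step 3: the reduction itself.} If the regularity is doubtful, I will perform the reduction directly. Split $v=v^0+v^\perp$ with $v^0\in\ker(-\Delta_\G-\lambda_k I)$, a finite-dimensional space. Solve the $v^\perp$ equation by the contraction principle, using that $I-\lambda L$ is invertible on the complement and that the Lipschitz constant of $H$ on small balls is $O(\|v\|^{\2-2})$. The resulting finite-dimensional reduced functional has, for $\lambda$ on either side of $\lambda_k$, a nondegenerate quadratic part of opposite sign, plus higher-order terms. A standard Lusternik--Schnirelmann or min-max argument on small spheres then yields nontrivial critical points $v_\lambda\to0$ as $\lambda\to\lambda_k$.

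Finally, $u=u_t(\lambda)+v_\lambda$ solves \eqref{1.1}, and these solutions converge to $(\lambda_k,u_t(\lambda_k))$. This proves Theorem~\ref{thm1.3}.
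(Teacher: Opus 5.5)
Your Steps 1--2 follow the paper's route, the B\"ohme--Marino variational bifurcation theorem \cite[Theorem 11.4]{Rabinowitz}, but more cleanly.

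The paper proceeds in three stages:
\begin{itemize}
\item it truncates the nonlinearity with the cut-off $\psi$;
\item it estimates $B(v)$ by splitting $\Omega$ along $\{|v|\ge 2\}$;
\item it then needs an $L^\infty$-smallness claim for the bifurcating solutions, stated without details, to remove the truncation.
\end{itemize}
You work directly in $\h$. The pointwise bound $(v+u_t)_+\le v_+$, together with duality and the Sobolev inequality, gives $\|H(\lambda,v)\|_{\h}\le C\|v\|_{\h}^{\2-1}$ uniformly in $\lambda$. Since the variational theorem needs no compactness of the nonlinear part, you need neither truncation nor $L^\infty$ estimates.

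The $C^2$ regularity you were unsure about does hold, so Step 3 is not needed for that purpose. The second derivative of the nonlinear term is $(\phi,\psi)\mapsto(\2-1)\int_\Omega (v+u_t)_+^{\2-2}\phi\psi\,d\xi$. The map $v\mapsto (v+u_t)_+^{\2-2}$ is continuous from $L^{\2}(\Omega)$ to $L^{Q/2}(\Omega)$, which gives continuity of this second derivative in operator norm.

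There is, however, a genuine gap, which the paper shares. The nonlinearity depends on $\lambda$ through $u_t(\lambda)$. By contrast, \cite[Theorem 11.4]{Rabinowitz} concerns $I'(u)=\lambda u$ with $I$ independent of $\lambda$, and the bifurcation parameter arises as a Lagrange multiplier on small spheres. If you freeze $u_t(\lambda_0)$ and apply the theorem, you get solutions of $-\Delta_\G v=\mu v+(v+u_t(\lambda_0))_+^{\2-1}$ with, in general, $\mu\neq\lambda_0$. These do not solve \eqref{2.4}. The paper passes from its multiplier $\mu$ to $\lambda$ without comment.

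Your Step 3, a Lyapunov--Schmidt reduction with $\lambda$ as a parameter, is the right framework for a repair, but its last sentence overclaims. The reduced functional $\Psi_\lambda(w)=\frac12(1-\lambda/\lambda_k)\|w\|_{\h}^2+o(\|w\|_{\h}^2)$ on $N=\ker(-\Delta_\G-\lambda_k I)$ need not have any nontrivial critical point for $\lambda\neq\lambda_k$. For example, suppose $u_t(\lambda)\le -c\,e_1$ for $\lambda$ near $\lambda_k$ and the $\lambda_k$-eigenfunctions are $O(e_1)$. Then $(w+u_t)_+\equiv0$ for small $w\in N$, so the reduction gives $\phi\equiv 0$ and $\Psi_\lambda$ is exactly quadratic. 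The only small nontrivial solutions are then the eigenfunctions at $\lambda=\lambda_k$ itself. Moreover, Lusternik--Schnirelmann on spheres again only produces a multiplier, not the $\lambda$ frozen in $u_t(\lambda)$.

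Conclude instead by a Morse-index jump:
\begin{itemize}
\item Suppose $(\lambda_k,0)$ were not a bifurcation point. Then $0$ would be the only critical point of $\Psi_\lambda$ in a fixed small ball of $N$ for all $\lambda$ in a neighbourhood of $\lambda_k$.
\item By homotopy invariance in finite dimensions, its critical groups would then be independent of $\lambda$.
\item But $0$ is a nondegenerate minimum for $\lambda<\lambda_k$, and a nondegenerate maximum of index $\dim N\ge 1$ for $\lambda>\lambda_k$. This holds because $u_t\le 0$ makes the second derivative of the nonlinear term vanish at $v=0$.
\end{itemize}
This contradiction is the parameter-dependent form of the variational bifurcation theorem. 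It is what actually closes both your argument and the paper's.
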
 
	\begin{proof}
		The conclusion follows from an abstract bifurcation theorem due to B\"ohme \cite{Bohme} and Marino \cite{Marino}, see also in \cite[Theorem 11.4]{Rabinowitz}. We consider a function $g: \R \times \R \to \R$ is defined by
		\[
		g(\lambda,s) = \psi(s)(s+u_t(\lambda))^{\2-1}_+ + (1-\psi(s)),
		\]
		with $\psi \in C^\infty(\mathbb{R})$ satisfy
		\[
		\psi(s) = 1 \quad \text{for } |s|\leq 1, 
		\qquad 
		\psi(s) = 0 \quad \text{for } |s|\geq 2, 
		\qquad 
		0 \leq \psi(s) \leq 1 \ \ \forall s.
		\]
		Then $g \in C^1$, and for bounded $\lambda$, we have $g(\lambda,s) = o(|s|)$ as $s \to 0$.
		Set
		\[
		\Phi(v) = \frac{1}{2}\int_\Omega |\nabla_\G v|^2 \ d\xi - \int_\Omega G(\lambda,v)\ d\xi,
		\qquad \text{ where }
		G(\lambda,v) = \int_0^v g(\lambda,t)\ dt,
		\]
		and $v \in \h$.  
		It is standard to check that $\Phi \in C^2$.  
		A critical point $u$ of $\Phi$ restricted to the manifold
		\[
		\mathcal{N} \coloneq \left\{u \in \h : \int_\Omega |u|^2\ d\xi = r^2\right\}
		\]
		is a weak solution of
		\[
		-\Delta_\G u - g(\lambda,u) = \mu u
		\]
		for some Lagrange multiplier $\mu$. Define the bilinear form
		\[
		(Av,\phi) = \int_\Omega \nabla_\G v \cdot \nabla_\G \phi \ d\xi,
		\qquad 
		B(v)\phi = \int_\Omega g(\lambda,v)\phi \ d\xi,
		\]
		for $\phi \in \h$. Let $2 < q < \2$ and define $\Omega_0 \coloneq \{\xi \in \Omega : |v(\xi)| \geq 2\}$ for $v \in \h$. Since $|v|^q\ge 2^q$ on $\Omega_0$, it follows that
		\[
		\int_\Omega |v|^q\,d\xi \ge 2^q|\Omega_0|,
		\qquad\text{hence}\qquad
		|\Omega_0|\le 2^{-q}\|v\|_{L^q(\Omega)}^q .
		\]
		Hence,
		\[
		|B(v)\phi| 
		\leq \int_{\Omega \setminus \Omega_0} |v|^{\2-1}|\phi|\ d\xi + \int_{\Omega_0} |\phi|\ d\xi.
		\]
		On $\Omega\setminus\Omega_0$, H\"older’s inequality together with the Sobolev embedding
		$\h\hookrightarrow L^q(\Omega)$ yields
		\[
		\int_{\Omega\setminus\Omega_0} |v|^{2^*_Q-1}|\phi|\,d\xi
		\le \|v\|^{\2-1}_{L^{2(\2-1)}(\Omega)}\|\nabla_\G \phi\|_{L^2(\Omega)}.
		\]
		On $\Omega_0$, using H\"older’s inequality and the estimate on $|\Omega_0|$, we have
		\[
		\int_{\Omega_0} |\phi|\,d\xi
		\le |\Omega_0|^{1/2}\|\phi\|_{L^2(\Omega)}
		\le C\|v\|_{L^q(\Omega)}^{q/2}\|\nabla_\G \phi\|_{L^2(\Omega)}.
		\]
		Combining the above estimates and using again the Sobolev embedding,
		we conclude that
		\[
		|B(v)\phi| 
		\le C\left(\|\nabla_\G v\|_{L^2(\Omega)}^{\2-1}+\|\nabla_\G v\|_{L^2(\Omega)}^{q/2}\right) \|\nabla_\G \phi\|_{L^2(\Omega)}
		= o(\|\nabla_\G v\|_{L^2(\Omega)})
		\quad \text{as } \|\nabla_\G v\|_{L^2(\Omega)}\to 0.
		\]
		This shows $\|B(v)\| = o(\|\nabla_\G v\|_{L^2(\Omega)})$. Therefore, by in \cite[Theorem 11.4]{Rabinowitz}, each eigenvalue of $-\Delta_\G$ is a bifurcation point of
		\begin{equation}
			-\Delta_\G v - g(\lambda,v) = \lambda v. \label{4.1}
		\end{equation}
		Since $g(\lambda,v) = o(\|\nabla_\G v\|_{L^2(\Omega)})$ and $\lambda$ is bounded, it follows from \eqref{4.1} that
		\[
		\|\nabla_\G v\|_{L^2(\Omega)} \leq C \|v\|_{L^2(\Omega)} = Cr.
		\]
		Finally, regularity arguments imply that if $r$ is sufficiently small, then
		\[
		\|v\|_{L^\infty(\Omega)} < 1,
		\qquad 
		g(\lambda,v) = (v+u_t(\lambda))^{\2-1}_+.
		\]
		This completes the proof.
	\end{proof}
	Next, we show that the bifurcation branch bends locally to the left.
	\begin{prop}
		If $(\lambda, v(\lambda))$, with $v(\lambda)\neq 0$, is a solution of \eqref{2.4} such that 
		\[
		\lambda \to \lambda_k, \quad k \neq 1, \qquad v(\lambda) \to 0,
		\]
		then $\lambda < \lambda_k$. Consequently, if $h \in \ker(-\Delta_\G - \lambda_k I)^\perp$ and $(\lambda,u(\lambda))$, with $u(\lambda)\neq 0$, is a solution of \eqref{1.1} such that
		\[
		\lambda \to \lambda_k, \quad k \neq 1, \qquad u(\lambda) \to u_t(\lambda_k),
		\]
		then necessarily $\lambda < \lambda_k$.
	\end{prop}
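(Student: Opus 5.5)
The plan is to test equation \eqref{2.4} against the projection of $v=v(\lambda)$ onto the high eigenspaces and use the sign of the nonlinearity. Write $v = v^- + v^+$, with $v^-\in\mathcal S^-=\operatorname{span}\{e_1,\dots,e_k\}$ and $v^+\in\mathcal S^+$. The first task is to check that $v$ is small in $L^\infty$. Since $u_t(\lambda_k)\le 0$, and in fact $u_t(\lambda_k)\le -c_0 e_1$ after $t$ is taken large as in Section~\ref{sec2}, the function $(v+u_t)_+$ can only be nonzero where $v \ge |u_t|$. Near $\partial\Omega$, however, $u_t$ tends to zero, so this region is not automatically empty. I therefore do not try to show $(v+u_t)_+\equiv 0$. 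Instead, I test the equation with $v$ itself and with suitable projections.

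The key step is to test \eqref{2.4} with $\phi = v^+ - v^-$. By orthogonality of the $e_j$ in both $L^2$ and $\h$, this gives
\[
\|\nabla_\G v^+\|^2-\lambda\|v^+\|_2^2-\bigl(\|\nabla_\G v^-\|^2-\lambda\|v^-\|_2^2\bigr)=\int_\Omega (v+u_t)_+^{\2-1}(v^+-v^-)\,d\xi .
\]
For the left side, the spectral inequalities give $\|\nabla_\G v^+\|^2-\lambda\|v^+\|_2^2\ge (1-\lambda/\lambda_{k+1})\|\nabla_\G v^+\|^2$ and $\lambda\|v^-\|_2^2-\|\nabla_\G v^-\|^2 \ge (\lambda-\lambda_k)\|v^-\|_2^2$, although the second bound may have either sign. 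For the right side, the nonlinearity is superlinear: $|(v+u_t)_+^{\2-1}|\le |v|^{\2-1}$ on its support. Hence the right side is $o(\|\nabla_\G v\|^2)$ as $v\to0$, by Sobolev's inequality. Next, decompose $v^-=a e_k+(\text{lower modes})$. Testing with $e_j$ for $j<k$, together with $\lambda-\lambda_j\ge c>0$, shows that these lower components are $o(\|v\|)$. The outcome is that $v$ is asymptotically a multiple of the $\lambda_k$-eigenfunction(s): $v = a\,\phi_k + o(\|v\|)$ with $\phi_k\in\ker(-\Delta_\G-\lambda_k)$.

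To fix the sign of $\lambda - \lambda_k$, test \eqref{2.4} with $\phi_k$. Using self-adjointness,
\[
(\lambda_k-\lambda)\int_\Omega v\,\phi_k\,d\xi=\int_\Omega (v+u_t)_+^{\2-1}\phi_k\,d\xi .
\]
This bare identity does not determine a sign, because $\phi_k$ changes sign when $k>1$. The cleaner route is to test with $v$ itself and compare with $\lambda_k$ through the min-max characterization on the span of $e_1,\dots,e_k$. Testing with $v$ gives
\[
\|\nabla_\G v\|^2-\lambda\|v\|_2^2=\int_\Omega (v+u_t)_+^{\2-1}v\,d\xi\ge\int_\Omega (v+u_t)_+^{\2}\,d\xi\ge 0,
\]
using $u_t\le0$ as in Lemma~\ref{lem10}. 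For a $v$ concentrated near the $\lambda_k$-eigenspace, $\|\nabla_\G v\|^2 = \lambda_k\|v\|_2^2 + O(\|v^+\|\text{-terms}) - (\text{positive lower-mode gaps})$, so I would aim to show that the error terms are dominated, which yields $\lambda \le \lambda_k$ up to higher order. Strictness then requires the right side to be strictly positive, i.e. $(v+u_t)_+\not\equiv0$. Strictness also uses the alternative case: if $(v+u_t)_+\equiv 0$, then $v$ solves $-\Delta_\G v=\lambda v$, so $\lambda=\lambda_k$ exactly. This case is then excluded, or shown to be non-bifurcating, because $v+u_t\le 0$ and $u_t(\lambda)$ is a linear branch.

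The hard part is the comparison in the last step. The estimate $\lambda\|v\|^2\le \|\nabla_\G v\|^2$ gives the wrong direction, since it only says $\lambda\le$ (Rayleigh quotient of $v$) $\approx\lambda_k$, and the Rayleigh quotient could slightly exceed $\lambda_k$ through the $v^+$ component. My plan to overcome this is to test separately with $v^+$. Testing with $v^+$ bounds $\|\nabla_\G v^+\|^2$ by $C\int (v+u_t)_+^{\2-1}|v^+|$, which controls the excess of the Rayleigh quotient over $\lambda_k$ by a quantity of the same order as the positive right-hand side. I would then exploit the exponent gap ($\2-1>1$) to show that this excess is strictly dominated, concluding $\lambda<\lambda_k$. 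The second assertion then follows by setting $v=u-u_t(\lambda)$, using the equivalence with \eqref{2.4} noted at the start of Section~\ref{sec4}.
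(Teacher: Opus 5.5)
Your proposal is correct in outline, but the argument that actually closes it takes a different route from the paper's.

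\textbf{The paper's route.} The paper tests \eqref{2.4} with $v^+-v^-$, exactly as in your first attempt. It then uses convexity of $G(s)=\frac{1}{2^*_Q}(s+u_t)_+^{2^*_Q}$ at $v$ in the direction $2v^+-v=v^+-v^-$, together with $u_t\le 0$:
\[
\int_\Omega (v+u_t)_+^{2^*_Q-1}(v^+-v^-)\,d\xi\le\int_\Omega G(2v^+)\,d\xi\le C\|\nabla_{\G}v^+\|^{2^*_Q}.
\]
This bounds the right side by the $\mathcal S^+$-component alone. So $\lambda\ge\lambda_k$ forces $1-\lambda/\lambda_{k+1}\le C\|\nabla_{\G}v^+\|^{2^*_Q-2}\to 0$, a contradiction. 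Your first attempt stalled precisely because you bounded that right side by $\|\nabla_{\G}v\|^{2^*_Q}$. That bound only shows $v^+$ is small relative to $v$.

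\textbf{Your route.} Your final plan tests separately with $v$ and with $v^+$. It does close, provided everything is measured against $I:=\int_\Omega (v+u_t)_+^{2^*_Q}\,d\xi$.

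The $v$-test, using $u_t\le 0$, gives $\int_\Omega (v+u_t)_+^{2^*_Q-1}v\,d\xi\ge I$. Combined with $\|\nabla_{\G}v^-\|^2\le\lambda_k\|v^-\|_2^2$, this yields $(\lambda-\lambda_k)\|v\|_2^2\le\|\nabla_{\G}v^+\|^2-I$.

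The $v^+$-test, H\"older and Sobolev give
\[
\Bigl(1-\frac{\lambda}{\lambda_{k+1}}\Bigr)\|\nabla_{\G}v^+\|^2\le I^{\frac{2^*_Q-1}{2^*_Q}}\,S_{\G}^{-1/2}\,\|\nabla_{\G}v^+\|,
\qquad\text{hence}\qquad
\|\nabla_{\G}v^+\|^2\le C\,I^{\frac{Q+2}{Q}}.
\]
Therefore $(\lambda-\lambda_k)\|v\|_2^2\le C I^{(Q+2)/Q}-I<0$ once $0<I$ is small, and $I\le\|v\|_{2^*_Q}^{2^*_Q}\to 0$.

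The crude bound $(v+u_t)_+^{2^*_Q-1}\le |v|^{2^*_Q-1}$ you used earlier would not work at this step. It produces powers of $\|\nabla_{\G}v\|$, and $I$ can be much smaller than $\|\nabla_{\G}v\|^{2^*_Q}$. Your claim $u_t\le -c_0e_1$ is not supplied by Section~\ref{sec2}, but you never use it.

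\textbf{Comparison.} The paper's route gives a one-line contradiction from a single test function, via the convexity trick. Yours needs only H\"older, Sobolev and $u_t\le 0$. It also gives the quantitative bound $(\lambda_k-\lambda)\|v\|_2^2\ge I-CI^{(Q+2)/Q}$ and isolates the exceptional case exactly. Both arguments need $\lambda_k<\lambda_{k+1}$, i.e.\ $k$ must be the last index of its eigenvalue cluster.

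\textbf{The case $I=0$.} This case cannot be ``excluded'' as you suggest. There the equation is linear, so $\lambda=\lambda_k$ and $v$ is a $\lambda_k$-eigenfunction with $v+u_t\le 0$. Whenever such small eigenfunctions exist, this is a genuine nonzero solution at $\lambda=\lambda_k$. So the statement only holds modulo this linear branch.

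The paper's proof has the same hole. It asserts that the $\mathcal S^+$-component is nonzero, but that fails exactly when $\lambda=\lambda_k$ and the solution lies in $\mathcal S^-$. In that situation, testing with the solution forces $(v+u_t)_+\equiv 0$.
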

	
	\begin{proof}
		Let $u = v + w$ be a solution of \eqref{2.4}, where $v \in \mathcal{S}^-$ and $w \in \mathcal{S}^+$.  
		Multiplying \eqref{2.4} by $w-v$ and integrating by parts, we obtain
		\begin{align*}
			\int_{\Omega} \big(|\nabla_\G w|^2 - |\nabla_\G v|^2\big)\ d\xi 
			& = \int_{\Omega} (\lambda u + (u+u_t(\lambda))^{\2-1}_+) (w-v) \ d\xi \nonumber\\
			&= \int_{\Omega} \Big[ \lambda(w^2-v^2) + (v+w+u_t(\lambda))^{\2-1}_+ (w-v)\Big] \ d\xi.
		\end{align*}
		This implies
		\begin{equation}\label{4.3}
			\Big(1 - \tfrac{\lambda}{\lambda_{k+1}}\Big)\int_{\Omega} |\nabla_\G w|^2 \ d\xi
			- \Big(1 - \frac{\lambda}{\lambda_k}\Big)\int_{\Omega} |\nabla_\G v|^2 \ d\xi
			\leq \int_{\Omega} (v+w+u_t(\lambda))^{\2-1}_+ (w-v)\ d\xi.
		\end{equation}
		By the convexity of the function $s \mapsto (s+u_t(\lambda))^{2^*-1}_+$ and since $u_t$ is non negative, we estimate
		\begin{align}\label{4.4}
			\int_{\Omega} (v+w+u_t(\lambda))^{\2-1}_+(w-v)\ d\xi
			&= \int_{\Omega} (v+w+u_t(\lambda))^{\2-1}_+(2w-u)\ d\xi \nonumber \\
			&\leq \int_{\Omega} (2w+u_t(\lambda))^{\2}_+ \ d\xi 
			- \int_{\Omega} (u+u_t(\lambda))^{\2}_+ \ d\xi \nonumber \\
			&\leq \int_{\Omega} (2w+u_t(\lambda))^{\2}_+ \ d\xi 
			\leq \int_{\Omega} |2w|^{\2}\ d\xi \nonumber \\
			&\leq C \|\nabla_\G w\|_{L^2(\Omega)}^{\2}.
		\end{align}
		Combining \eqref{4.3} and \eqref{4.4}, we obtain
		\begin{equation}\nonumber
			\Bigg(\Big(1-\frac{\lambda}{\lambda_{k+1}}\Big) - C\|\nabla_\G w\|_{L^2(\Omega)}^{\2-2}\Bigg)\|\nabla_\G w\|_{L^2(\Omega)}^2
			- \Big(1-\frac{\lambda}{\lambda_k}\Big)\|\nabla_\G v\|_{L^2(\Omega)}^2 \leq 0.
		\end{equation}
		Suppose, by contradiction, that $\lambda \geq \lambda_k$.  
		Since $\frac{\lambda}{\lambda_k}-1 \ge 0$ and $u=v+w\neq 0$, we must have $w\neq 0$.  
		Hence,
		\begin{equation}\label{4.5}
			\Big(1 - \frac{\lambda}{\lambda_{k+1}}\Big) \leq C\|\nabla_\G w\|_{L^2(\Omega)}^{\2-2} \leq C\|\nabla_\G u\|_{L^2(\Omega)}^{\2-2}.
		\end{equation}
		Letting $\lambda\to\lambda_k$ and $u\to0$ in $\h$, the left-hand side
		of \eqref{4.5} converges to
		$1-\frac{\lambda_k}{\lambda_{k+1}}>0$, while the right-hand side tends to zero, which is impossible. Therefore, we must have $\lambda < \lambda_k$.  
	\end{proof}
	
	\section*{Acknowlegement} 
	Suman Kanungo acknowledges the financial aid from Council of Scientific \& Industrial Research (CSIR), Government of India, File No. 09/1237(15789)/2022-EMR-I. The research of Pawan Kumar Mishra is supported by the Science and Engineering Research Board, Government of India, Grant No. MTR/2022/000495.
	

\begin{thebibliography}{99}
		
		\bibitem{Ambrosetti} A. Ambrosetti, G. Prodi, On the inversion of some differentiable mappings with singularities between Banach spaces, Ann. Mat. Pura Appl. 93 (1972) 231-246.
		
		\bibitem{Ambrosio} V. Ambrosio, T. Isernia, The critical fractional Ambrosetti-Prodi problem, Rend. Circ. Mat. Palermo, II. Ser 71 (2022) 1107-1132.
		
		\bibitem{Birindelli} I. Birindelli, I. Capuzzo Dolcetta, A. Cutr\'i, Liouville theorems for semilinear equations on the Heisenberg group, Ann. Inst. H. Poincar\'e C Anal. Non Lin\'eaire 14 (1997) 295-308.
		
		\bibitem{Birindelli1} I. Birindelli, J.V. Prajapat, Nonlinear Liouville theorems in the Heisenberg group via the moving plane method, Comm. Partial Differential Equations 24 (1999) 1875-1890. 
		
		\bibitem{Bisci} G.M. Bisci, D. Repovs, Yamabe-type equations on Carnot groups, Potential Anal. 46 (2017)
		369-383.
		
		\bibitem{Bohme} R. B\"ohme, Die L\"osung der Verzwergungsgleichungen f\"ur nichtlineare Eigenwertprobleme, Math. Z. 127 (1972) 105-126.
		
		\bibitem{Bonfiglioli} A. Bonfiglioli, E. Lanconelli, F. Uguzzoni, Stratified Lie groups and potential theory for their sub-Laplacians, Berlin, Heidelberg: Springer Berlin Heidelberg (2007).
		
		\bibitem{BonfiglioliUguzzoni} A. Bonfiglioli, F. Uguzzoni, Nonlinear Liouville theorems for some critical problems on H-type groups, J. Funct. Anal. 207 (2004) 161-215.
		
		\bibitem{Bony} J.M. Bony, Principe du maximum, in\'egalit\'e de Harnack et unicit\'e du probl\'eme de Cauchy pour les op\'erateurs elliptiques d\'eg\'en\'er\'es, Ann. Inst. Fourier Grenobles 19 (1969) 277-304.
		
		\bibitem{Brezis} H. Br\'ezis, L. Nirenberg, Positive solutions of nonlinear elliptic equations involving critical Sobolev exponents, Commun. Pure Appl. Math. 36 (1983) 437-477.
		
		\bibitem{Brandolini} L. Brandolini, M. Rigoli, A.G. Setti, Positive solutions of Yamabe-type equations on the Heisenberg group, Duke Math. J. 91 (1998) 241-296.
		
		\bibitem{Calanchi} M. Calanchi, B. Ruf, Elliptic equations with one-sided critical growth, Electron. J. Differential Equations (2002) 1-21.
		
		\bibitem{Citti} G. Citti, Semilinear Dirichlet problem involving critical exponent for the Kohn Laplacian,
		Ann. Mat. Pura. Appl. 169 (1995) 375-392.
		
		\bibitem{Citti1} G. Citti, F. Uguzzoni, Critical semilinear equations on the Heisenberg group: the
		effect of the topology of the domain, Nonlinear Anal. 46 (2001) 399-417. 
		
		\bibitem{Cuesta} M. Cuesta, D.G. de Figueiredo, P.N. Srikanth, On a resonant-superlinear elliptic problem, Calc. Var. Partial Differ. Equ. 17 (2003) 221-233.
		
		\bibitem{Deng} Y.B. Deng, On the superlinear Ambrosetti-Prodi problem involving critical Sobolev exponents, Nonlinear Anal. 17 (1991) 1111-1124.
		
		\bibitem{Figueiredo1} D.G. de Figueiredo, On the Superlinear Ambrosetti-Prodi Problem, Nonlinear Analysis TMA 8 (1984) 655-665. 
		
		\bibitem{Figueiredo} D.G. de Figueiredo, Y. Jianfu, Critical superlinear Ambrosetti-Prodi problems, Topol. Methods Nonlinear Anal. 14 (1999) 59-80.
		
		\bibitem{Folland} G.B. Folland, Subelliptic estimates and function spaces on nilpotent Lie groups, Ark. Mat. 13 (1975) 161-207.
		
		\bibitem{Garagnani} E. Garagnani, F. Uguzzoni, A multiplicity result for a degenerate-elliptic equation
		with critical growth on noncontractible domains, Topol. Meth. Nonlinear Anal. 22 (2003) 53-68.
		
		\bibitem{Garofalo1} N. Garofalo, E. Lanconelli, Existence and nonexistence results for semilinear equations on the Heisenberg group, Indiana Univ. Math. J. 41 (1992) 71-98.
		
		\bibitem{Garofalo} N. Garofalo, D. Vassilev, Regularity near the characteristic set in the non-linear Dirichlet problem and conformal geometry of sub-Laplacians on Carnot Groups, Math. Ann. 318 (2000) 453-516.
		
		\bibitem{Garofalo1} N. Garofalo, D. Vassilev, Symmetry properties of positive entire solutions of Yamabe type
		equations on groups of Heisenberg type, Duke Math. J. 106 (2001) 411-448.
		
		\bibitem{Hormander} L. H\"ormander, Hypoelliptic second order differential equations, Acta Math. 119 (1967) 147-171.
		
		\bibitem{Jerison1} D. Jerison, J. Lee, The Yamabe problem on CR manifolds, J. Differential Geom. 25, (1987) 167-197. 
		
		\bibitem{Jerison2} D. Jerison, J. Lee, Extremals for the Sobolev inequality on the Heisenberg group and the CR Yamabe problem, J. Amer. Math. Soc. 1 (1988) 1-13. 
		
		\bibitem{Jerison3} D. Jerison, J. Lee, Intrinsic CR normal coordinates and the CR Yamabe problem, J. Differ. Geom. 29 (1989) 303-343.
		
		\bibitem{Lanconelli} E. Lanconelli, F. Uguzzoni, Asymptotic behaviour and non-existence theorems for semilinear Dirichlet problems involving critical exponent on unbounded domains of the Heisenberg group, Boll. Unione Mat. Ital. 1 (1998) 139-168.
		
		\bibitem{Lanconelli1} E. Lanconelli, F. Uguzzoni, Non-existence results for semilinear Kohn-Laplace equations in unbounded domains, Commun. Partial Diff. Equ. 25 (2000) 1703-1739.
			
		\bibitem{Loiudice2} A. Loiudice, Semilinear subelliptic problems with critical growth on Carnot groups, Manuscr. Math. 124 (2007) 247-259.
		
		\bibitem{Lu} G. Lu, J. Wei, On positive entire solutions to the Yamabe-type problem on the Heisenberg and stratified groups, Electron. Res. Announc. Amer. Math. Soc. 3 (1997) 83-89.
		
		\bibitem{Maalaoui} A. Maalaoui, V. Martino, Multiplicity result for a nonhomogeneous Yamabe type equation involving the Kohn Laplacian, J. Math. Anal. Appl. 399 (2013) 333-339.
		
		\bibitem{Maalaoui1} A. Maalaoui, V. Martino, A. Pistoia, Concentrating solutions for a sub-critical sub-elliptic problem, Differential Integral Equations 26 (2013) 1263-1274.
		
		\bibitem{Marino} A. Marino, La biforcazione nel caso variazionale, Confer. Sem. Mat. Univ. Bari 132
		(1977).
		
		\bibitem{Mawhin} J. Mawhin, M. Willem, Critical Point Theory and Hamiltonian Systems, Springer-Verlag (1989).
		
		\bibitem{Miyagaki} O.H. Miyagaki, D. Motreanu, F.R. Pereira, Multiple solutions for a fractional elliptic problem with critical growth, J. Differ. Equ. 269 (2020) 5542-5572.
		
		\bibitem{Paiva} F.O. de Paiva, A.E. Presoto, Semilinear elliptic problems with asymmetric nonlinearities, J.  Math. Anal. App. 409 (2014) 254-262.
		
		\bibitem{Palatucci} G. Palatucci, M. Piccinini, L. Temperini, Struwe’s global compactness and energy approximation of the critical Sobolev embedding in the Heisenberg group, Adv. Calc. Var. 18 (2025) 731-754.
		
		\bibitem{Rabinowitz} P.H. Rabinowitz, Minimax methods in critical point theory with applications to differential equations, Expository lectures from the CBMS Regional Conference, Series in Mathematics, American Mathematical Society, Vol. 65 (1986).
		
		\bibitem{Ribeiro} B. Ribeiro, E. Gloss, H. Pereira,  $(p, q)$-Laplacian Equations with Critical Growth and Jumping Nonlinearities: B. Ribeiro et al., Bull. Braz. Math. Soc. (N.S.) 56 (2025), p.30.
		
		\bibitem{Rothschild} L.P. Rothschild, E.M. Stein, Hypoelliptic differential operators and nilpotent groups, Acta Math. 137 (1976) 247-320.
		
		\bibitem{Ruf} B. Ruf, P.N. Srikanth, Multiplicity results for superlinear elliptic problems with partial interference with the spectrum, J. Math. Anal. Appl. 118 (1986) 15-23. 
		
		\bibitem{Sharma} L. Sharma, T. Mukherjee, On critical Ambrosetti-Prodi type problems involving mixed operator, J. Elliptic Parabol. Equ. 10 (2024) 1187-1216.
		
		\bibitem{Uguzzoni} F. Uguzzoni, A non-existence theorem for a semilinear Dirichlet problem involving critical exponent on halfspaces on the Heisenberg group, NoDEA Nonlinear Diff. Equ. Appl. 6 (1999) 191-206.
		
		\bibitem{Uguzzoni1} F. Uguzzoni, A note on Yamabe-type equations on the Heisenberg group, Hiroshima Math. J. 30 (2000) 179-189.
		
		\bibitem{Wei} J. Wei, S. Yan. Lazer-McKenna conjecture: The critical case, J. Funct. Anal. 244 (2007) 639-667.
		
	\end{thebibliography}
\end{document}